\definecolor{shadecolor}{rgb}{0.8,0.8,0.8}
\definecolor{ocre}{cmyk}{0,0.58,0.897,0.0471} 
\definecolor{warmblue}{cmyk}{0.667,0.333,0,0.4} 
\newtheorem{theorem}{Theorem}[section]
\newtheorem{lemma}[theorem]{Lemma}
\newtheorem{proposition}[theorem]{Proposition}
\newtheorem{corollary}[theorem]{Corollary}
\newtheorem{definition}[theorem]{Definition}
\newenvironment{proof}{{\flushleft \emph{Proof}:}}{\hfill\ding{110}}
\newenvironment{remark}{{\flushleft\bfseries Remark:}}{}
\newenvironment{comment}{{\flushleft\bfseries Comment:}}{}
\newenvironment{comments}{{\flushleft\bfseries Comments:}}{}
\def\Xint#1{\mathchoice
   {\XXint\displaystyle\textstyle{#1}}%
   {\XXint\textstyle\scriptstyle{#1}}%
   {\XXint\scriptstyle\scriptscriptstyle{#1}}%
   {\XXint\scriptscriptstyle\scriptscriptstyle{#1}}%
   \!\int}
\def\XXint#1#2#3{{\setbox0=\hbox{$#1{#2#3}{\int}$}
     \vcenter{\hbox{$#2#3$}}\kern-.5\wd0}}
\def\dashint{\Xint-}
\definecolor{shadecolor}{rgb}{0.90,0.90,0.90}
\newcommand{\ind}{\mathds{1}}
\newcommand{\M}{\mathcal{M}}
\newcommand{\Wlami}{{\W_\lambda^j}}
\newcommand{\Glam}{{\Gamma_\lambda}}
\newcommand{\Glami}{{\Gamma_\lambda^j}}
\newcommand{\Sph}{\mathbb{S}^2}
\newcommand{\SphG}{\Sph\setminus\Gamma}
\newcommand{\Circ}{\mathbb{S}^1}
\newcommand{\g}{\frak{g}}
\newcommand{\n}{\mathcal{N}}
\newcommand{\Imm}{\operatorname{Imm}}
\renewcommand{\span}{\operatorname{span}}
\newcommand{\VMO}{\operatorname{VMO}}
\newcommand{\Len}{\operatorname{Length}}
\renewcommand{\div}{\operatorname{div}}
\newcommand{\W}{\Omega}
\newcommand{\widebar}[1]{\mkern1.5mu\overline{\mkern-1.5mu#1\mkern-1.5mu}\mkern1.5mu}
\newcommand{\bW}{\widebar{\W}}
\newcommand{\bD}{\widebar{D}}
\newcommand{\bU}{\widebar{U}}
\newcommand{\dW}{{\partial\W}}
\newcommand{\dD}{{\partial D}}
\newcommand{\kg}{\kappa_g}
\newcommand{\tkg}{\tilde{\kappa}_g}
\newcommand{\tW}{\tilde{\W}}
\newcommand{\talpha}{\tilde{\alpha}}
\newcommand{\tsigma}{\tilde{\sigma}}
\newcommand{\tGamma}{\tilde{\Gamma}}
\newcommand{\G}{\mathcal{G}}
\newcommand{\ip}[1]{\left\langle #1 \right\rangle}
\newcommand{\euc}{\mathfrak{e}}
\newcommand{\Emph}[1]{{\bfseries #1}}
\newcommand{\ca}{c_\alpha}
\newcommand{\Vol}{{\operatorname{Vol}}}
\newcommand{\VolGen}{\operatorname{dVol}}
\newcommand{\VolG}{\VolGen_\g}
\newcommand{\VolGn}{\VolGen_{\g_n}}
\newcommand{\VolGeps}{\VolGen_{\g_\e}}
\newcommand{\iVolG}{\VolGen_{\iota^* \g}}
\newcommand{\iVolGeps}{\VolGen_{\iota^* {\g_\e}}}
\newcommand{\VolSph}{\VolGen_{\Sph}}
\newcommand{\Hom}{{\operatorname{Hom}}}
\newcommand{\SO}{{\operatorname{SO}}}
\newcommand{\Textand}{\qquad\text{ and }\qquad}
\newcommand{\dotHnorm}{\dot{H}^1(\W,\g)}
\newcommand{\dotHnormn}{\dot{H}^1(\W,\g_n)}
\newcommand{\LoneMC}{L^1_\text{MC}(\W,\g)}
\newcommand{\HoneMC}{H^1_\text{MC}(\W,\g)}
\newcommand{\HMoneMC}{\Hminusone_\text{MC}(\W,\g)}
\newcommand{\HthreehalvesImm}{H_{\Imm}^{3/2}}
\newcommand{\HtwoImm}{H_{\Imm}^2}
\newcommand{\Hthreehalves}{H^{3/2}}
\newcommand{\Htwo}{H^2}
\newcommand{\Hhalf}{H^{1/2}}
\newcommand{\Hminushalf}{H^{-1/2}}
\newcommand{\Hone}{H^{1}}
\newcommand{\Hminusone}{H^{-1}}
\newcommand{\Hs}{H^{s}}
\newcommand{\dHs}{\dot{H}^{s}}
\newcommand{\R}{\mathbb{R}}
\newcommand{\tgamma}{\tilde{\gamma}}
\newcommand{\id}{{\operatorname{Id}}}
\newcommand{\pl}{\partial}
\newcommand{\inj}{\hookrightarrow}
\newcommand{\weakly}{\rightharpoonup}
\newcommand{\EqNo}[2]{\stackrel{\eqref{#2}}{#1}}
\newcommand{\Kbnd}{\bbK_{p}}
\newcommand{\brk}[1]{\left(#1\right)}          
\newcommand{\Abs}[1]{\left| #1 \right|}        
\newcommand{\beq}{\begin{equation}}
\newcommand{\eeq}{\end{equation}}
\newcommand{\bsplit}{\begin{split}}
\newcommand{\esplit}{\end{split}}
\newcommand{\baligned}{\begin{aligned}}
\newcommand{\ealigned}{\end{aligned}}
\newcommand{\secref}[1]{Section~\ref{#1}}
\newcommand{\figref}[1]{Figure~\ref{#1}}
\newcommand{\thmref}[1]{Theorem~\ref{#1}}
\newcommand{\defref}[1]{Definition~\ref{#1}}
\newcommand{\propref}[1]{Proposition~\ref{#1}}
\newcommand{\lemref}[1]{Lemma~\ref{#1}}
\newcommand{\deriv}[2]{\frac{d#1}{d#2}}
\newcommand{\pd}[2]{\frac{\partial#1}{\partial#2}}
\newcommand{\limn}{\lim_{n\to\infty}}
\newcommand{\liminfn}{\liminf_{n\to\infty}}
\newcommand{\limsupn}{\limsup_{n\to\infty}}
\newcommand{\frakt}{\mathfrak{t}}
\newcommand{\frakn}{\mathfrak{n}}
\newcommand{\vp}{\varphi}
\newcommand{\bbZ}{\mathbb{Z}}
\newcommand{\calL}{\mathcal{L}}
\newcommand{\bbN}{\mathbb{N}}
\newcommand{\e}{\varepsilon}
\newcommand{\bbK}{\mathbb{K}}
\newcommand{\btkz}{\begin{tikzpicture}}
\newcommand{\etkz}{\end{tikzpicture}}
\numberwithin{equation}{section}
\begin{document}
\title{The Willmore energy and curvature concentration}
\author{Raz Kupferman*, Cy Maor* and David Padilla-Garza\footnote{Einstein Institute of Mathematics, Hebrew University of Jerusalem}}
\date{}

\maketitle

\begin{abstract}
\noindent We study isometric immersions of a Riemannian surface $(\W,\g)$, where $\W\subset \R^2$, into $\R^3$.
We consider their bending energy,  i.e., the square of the $L^2$-norm of their second fundamental form, which is equivalent to the Willmore functional. 
We obtain two new lower bounds for this energy, one in terms of the Gaussian curvature of the surface, and the other 
in terms of a Burgers vector---a measure of non-flatness connected to torsion. 
These new estimates provide optimal blowup rates of the energy when the curvature is concentrated (e.g., in a conical geometry).
In the more subtle case of dipoles of concentrated curvature,  we use the Burgers vector estimates to obtain an optimal blowup rate in terms of the size of the system.
Our motivation comes from non-Euclidean elasticity, 
in which cones and curvature-dipoles play a central role. 
The lower bounds derived in this work directly yield lower bounds for the elastic energy of thin elastic sheets.
The derivation of the curvature-based lower bound involves an isoperimetric inequality for framed loops, which we believe to be of independent interest.
\end{abstract}

\begingroup
\footnotesize
\tableofcontents
\endgroup

\section{Introduction and main results}

This paper is concerned with lower bounds for the \Emph{bending energy} of an immersed surface $S\subset \R^3$:
\beq
\label{eq:bending_energy}
E_B(S) = \int_S |d\n|_\g^2\, \VolG,
\eeq
where $\n:S\to\Sph$ is the unit normal (the Gauss map) of $S$, $\g$ is its induced metric and $\VolG$ is the corresponding volume form.
The integrand is also equal to the square of the norm of the second fundamental form.
Since 
\[
|d\n|_\g^2 = 4H^2 - 2K,
\] 
where $H$ is the mean curvature and $K$ is the Gaussian curvature, this energy is equivalent to the \Emph{Willmore functional},
\[
W(S) = \int_S |H|^2\,\VolG,
\]
when considering immersions of a given closed manifold (in which case the integral of $K$ is a topological invariant), or when considering isometric immersions of a given (generally non-closed) Riemannian surface $(\W,\g)$.

These energy functionals have been studied extensively in relation to the Willmore conjecture \cite{MN14b}, to its gradient flow \cite{KS02}, and to its critical points \cite{Riv08}, to name a few directions.
Our initial motivation in studying this functional comes from material science, where the bending energy $E_B$ plays a prominent role in the elastic theory of thin sheets; see Section~\ref{sec:literature} for more details.

A natural problem is to establish lower bounds on the bending energy, which is extrinsic, i.e., immersion-dependent, in terms of the intrinsic geometric data of the surface.
An elementary and well-known lower bound on the bending energy density of an immersed surface is 
\[
|d\n|_\g^2 \ge 2|K|,
\] 
which is simply the arithmetic-geometric mean inequality for the principal curvatures.
Integrating over the surface, this implies that for every isometric immersion of $(\W,\g)$, 
\beq\label{eq:L_1_bound}
E_B(S) \ge 2\int_S |K| \VolG = 2\, \|K\|_{L^1(S)}.
\eeq
This somewhat trivial lower bound fails to detect two phenomena: concentration of curvature and energetic super-additivity.
By \Emph{concentration of curvature}, we mean that it is more energetically costly to immerse isometrically a surface with a given total curvature $\|K\|_{L^1}$, when its curvature is concentrated; the example to have in mind is a cone versus a spherical cap having the same total curvature. 
By \Emph{energetic super-additivity}, we mean that the infimal energy of an isometric immersion of a surface is generally greater than the sum of the infimal energies of isometric immersions of a partition of that surface (in the physics literature, this is sometimes known as superextensive energy).

The aim of this paper is to address the first phenomenon: obtaining a lower bound detecting curvature concentration.
Since this is a local phenomenon, we assume that the surface can be covered by a single chart $\W\subset \R^2$---that is, we consider immersions $f:\W\to \R^3$. 
We denote by $L^p(\W,\g)$ the $L^p$ norm with respect to the metric $\g$ induced by the immersion.
With this notation, for $S=f(\W)$,
\[
E_B(S) = \|d\n\|_{L^2(\W,\g)}^2.
\]
Our main result shows that, under some technical assumptions (see \thmref{thm:main_theorem}), 
\beq\label{eq:main_bound}
\|d\n\|_{L^2(\W,\g)}^2 \ge \frac{4\pi - \|K\|_{\LoneMC}}{\|K\|_{\LoneMC}} \|K\|_{\HMoneMC}^2.
\eeq
For simply-connected surfaces, the $L^1$ and $\Hminusone$ subscripts denote the standard Sobolev norms.
For multiply-connected surfaces (whence the subscript MC), they are defined such to encompass Gaussian curvature that may be ``hidden in holes''; see below for exact definitions.
As neither of the $L^1$ and the $\Hminusone$ norms control each other, this lower bound is neither stronger nor weaker than \eqref{eq:L_1_bound}.
However, it detects concentrations, and in particular, gives a sharp lower bound for the blowup of the bending energy of isometric immersions of cones (see Section~\ref{sec:examples}).
This bound extends results of Olbermann \cite{Olb17,Olb18}, whose works motivated our approach; see Section~\ref{sec:literature} for a more detailed comparison.

A main tool in proving the bound \eqref{eq:main_bound} is the estimation of the $L^1$ norm of $d\n$ along closed curves on the surface.
To this end, we derive an isoperimetric inequality pertinent to \Emph{framed loops} (\thmref{thm:GB+iso}), which we believe to be of independent interest.

There is an other instance of curvature concentration that neither of the bounds \eqref{eq:L_1_bound} and \eqref{eq:main_bound} detect: the case of \Emph{curvature dipoles}, where the curvature exhibits two loci of opposite signs.
The importance of this geometry stems from materials science, where curvature dipoles arise from a lattice defect known as an \Emph{edge-dislocation}. In this case, the presence of the defect is detected by a geometric measure known as a \Emph{Burgers vector}.

In Section~\ref{sec:examples}, we extend results of \cite{Kup17} and derive lower bounds for the bending energy based on the Burgers vector content, rather than on curvature.
For cone singularities, this lower bound detects the bending energy blowups in a way that is complementary to \eqref{eq:main_bound}---the result is weaker for small total curvatures, but stronger for large ones (Theorem~\ref{thm:cone}).
Furthermore,  and unlike \eqref{eq:main_bound}, the lower bound based on the Burgers vector detects the blowup of the bending energy for curvature dipoles (\thmref{thm:LB_disloc_optimal}). 

\subsection{Definitions}\label{sec:definitions}
In order to state our main results, we first need to introduce several definitions:

\paragraph{Immersions.}
This work concerns immersions, which may be of regularity lower than $C^1$, or be only defined in open domains:

\begin{definition}\label{def:immersions}
Let $\W\subset \R^2$ be an open domain.
We say that a differentiable map $f:\W\to \R^3$ is an \Emph{immersion} if its derivative $df$ has full rank, and the eigenvalues of $\g = f^*\euc$, viewed as a map $\W\to \Hom(\R^2;\R^2)$, are bounded and bounded away from zero. I.e., the metric and its inverse are bounded.
If $f$ is a weakly differentiable map, then we require the conditions to hold almost everywhere.
\end{definition}

Throughout this work, we denote by $C^\infty_{\Imm}(X;Y)$ the space of smooth immersions $X\to Y$, by $C^k_{\Imm}(X;Y)$ the space of $C^k$-immersions, and by $W^{m,p}_{\Imm}(X;Y)$ the space of $W^{m,p}$-immersions. 
For $p=2$, we write $H^s$ rather than $W^{s,2}$.
Of particular importance will be the space $\HtwoImm(\W;\R^3)$. 
In this case, the boundedness of the metric and its inverse imply that the unit normal is in $\Hone(\W;\Sph)$.

\paragraph{Frames along loops.}
Much of the analysis in this work concerns so-called framed loops, which are defined below. 
The analysis depends on whether a given frame along a closed curve can be obtained as a Darboux frame along the boundary of an immersed disc.
Here we explain this notion, and how it generalizes to the regularity of $\Hhalf$ frames which arise in our work.

We start with the smooth case. 
Let $\gamma\in C^\infty_{\Imm}(\Circ;\R^3)$ be an immersed closed curve, and let $\n\in C^\infty(\Circ;\Sph)$ be a map satisfying
\[
\n \perp \gamma' .
\]
We call such a pair $(\gamma,\n)$ a \textbf{framed loop}, as it defines a \Emph{moving trihedron}
\[
(\frakt,\frakn,\n) \in C^\infty(\Circ;\SO(3)),
\]
with 
\beq
\label{eq:Darboux}
\frakt = \gamma'/|\gamma'|
\Textand
\frakn = \n\times\frakt.
\eeq
Note that not every map in $C^\infty(\Circ;\SO(3))$ is induced by framed loops (for example, constant maps are not). 

\begin{definition}[Extendable framed loop]
\label{def:extandable_frame_smooth}
We say that a smooth framed loop $(\gamma,\n)$ is \Emph{extendable} if there exists an immersed disc $f\in C_{\Imm}^\infty(D; \R^3)$, such that $\gamma=f|_{\pl D}$, and $\n$ is the restriction to $\pl D\simeq\Circ$ of the normal of $f$.
\end{definition}

Thus, a framed loop is extendable if its induced moving trihedron can be realized as a Darboux frame at the boundary of an immersed disc.
Extendability has a more topological characterization (see Section~\ref{sec:Darboux} for a proof):

\begin{proposition}\label{prop:extendabe_eq_nontrivial}
A smooth framed loop $(\gamma,\n)$ is extendable 
if and only if it is isotopic to the framed loop $(\gamma_0,\n_0)$, where
\beq\label{eq:trivial_frame}
\gamma_0(\vp) = (\cos\vp,\sin\vp,0) \qquad \n_0 = (0,0,1).
\eeq
Here, isotopy means that the loops can be connected by a smooth homotopy preserving the orthogonality of $\gamma'$ and $\n$.
\end{proposition}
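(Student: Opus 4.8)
This statement is an equivalence, and I would establish the two implications by essentially reciprocal constructions. For ``$(\gamma,\n)$ extendable $\Rightarrow$ $(\gamma,\n)$ isotopic to $(\gamma_0,\n_0)$'' I would shrink an extending disc onto its center, rescaling so that the boundary loop does not collapse; for the converse I would observe that $(\gamma_0,\n_0)$ is extended by the flat disc and that extendability is preserved under isotopies of framed loops.

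For the forward implication, let $f\in C_{\Imm}^\infty(D;\R^3)$ extend $(\gamma,\n)$, identify $D$ with the closed unit disc in $\R^2\cong\C$, write $L:=df|_0$ for the differential at the center, and set, for $t\in(0,1]$,
\[
\gamma_t(\varphi)=\tfrac1t\brk{f(te^{i\varphi})-f(0)},\qquad \n_t(\varphi)=\text{ the unit normal of }f\text{ at }te^{i\varphi}.
\]
By Hadamard's lemma $f(te^{i\varphi})-f(0)=t\,g(t,\varphi)$ with $g$ smooth up to $t=0$ and $g(0,\varphi)=L(\cos\varphi,\sin\varphi)$, so $(\gamma_t,\n_t)$ extends smoothly to $t=0$ with $\gamma_0=L\cdot e^{i\varphi}$ and $\n_0=\tfrac{Le_1\times Le_2}{\Abs{Le_1\times Le_2}}$. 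For every $t\in[0,1]$ the pair $(\gamma_t,\n_t)$ is a framed loop: $\pl_\varphi\gamma_t=df_{te^{i\varphi}}(-\sin\varphi,\cos\varphi)\ne0$ since $df$ has full rank (with $L$ in place of $df_{0}$ when $t=0$), and $\n_t\perp\pl_\varphi\gamma_t$ because the normal of $f$ annihilates the image of $df$. Thus $(\gamma_t,\n_t)_{t\in[0,1]}$ is an isotopy of framed loops joining $(\gamma,\n)$ --- up to translation by $f(0)$, itself an isotopy --- to the linear model $(L\cdot e^{i\varphi},\,\tfrac{Le_1\times Le_2}{\Abs{Le_1\times Le_2}})$ with $L$ an injective linear map $\R^2\to\R^3$. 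Since the space of such $L$ is path connected, following a path $L_\sigma$ from $L$ to the standard inclusion --- along which $(L_\sigma\cdot e^{i\varphi},\,\tfrac{L_\sigma e_1\times L_\sigma e_2}{\Abs{L_\sigma e_1\times L_\sigma e_2}})$ remains a framed loop --- brings us to $(\gamma_0,\n_0)$. Concatenating, $(\gamma,\n)$ is isotopic to $(\gamma_0,\n_0)$.

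For the converse, the flat disc $f_0(x,y)=(x,y,0)$ extends $(\gamma_0,\n_0)$, so it is enough to show: if $(\gamma^0,\n^0)$ is extendable, with extending disc $f^0$, and $(\gamma^1,\n^1)$ is isotopic to it, then $(\gamma^1,\n^1)$ is extendable. Near a framed loop $(\gamma^j,\n^j)$ the collar $(\varphi,s)\mapsto\gamma^j(\varphi)+s\,\frakn^j(\varphi)$ is an immersion whose induced unit normal along the boundary circle is $\n^j$, because $(\frakt^j,\frakn^j,\n^j)$ is a positively oriented orthonormal frame. I would therefore adjust $f^0$ near its boundary so as to coincide with the collar over $\gamma^0$, then attach an immersed annulus $A\colon\Circ\times[0,1]\to\R^3$ that coincides, near its two boundary circles, with the collars over $\gamma^0$ and $\gamma^1$ respectively; gluing $A$ to $f^0$ along the common collar over $\gamma^0$ then yields (after a routine smoothing at the seam) an immersed disc whose boundary framed loop is $(\gamma^1,\n^1)$, as required.

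The existence of $A$ is the crux and the main obstacle. The naive candidate, the trace $(\varphi,\tau)\mapsto\gamma_\tau(\varphi)$ of the isotopy, is not an immersion where $\pl_\tau\gamma_\tau$ is tangent to $\gamma_\tau'$, and this cannot be repaired by a small perturbation. I would instead invoke the Hirsch--Smale $h$-principle for immersions, relative to the boundary: it suffices to produce a formal solution, that is, a bundle monomorphism $T(\Circ\times[0,1])\hookrightarrow T\R^3$ over the annulus agreeing on the two boundary circles with the differentials of the prescribed collars. As the annulus and $T\R^3$ are trivial, such a monomorphism amounts to a map from the annulus into the (path connected) space of injective linear maps $\R^2\to\R^3$; equivalently, after Gram--Schmidt, to a homotopy of loops in $\SO(3)$ between the moving trihedra $(\frakt^j,\frakn^j,\n^j)$, $j=0,1$. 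The given isotopy of framed loops supplies exactly such a homotopy, since the moving trihedron depends continuously on the framed loop, so $A$ exists. (Alternatively, $A$ can be built by hand as $(\varphi,\tau)\mapsto\gamma_\tau(\varphi)+\lambda(\tau)\,\frakn_\tau(\varphi)$ for a sufficiently steep $\lambda$, using that $\pl_\varphi$ of this map is orthogonal to $\frakn_\tau$ to force $\pl_\tau$ and $\pl_\varphi$ to be independent; the delicate point is then the behavior of $\lambda$ near the two ends, where one must ensure the induced normal comes out as $+\n^j$ rather than $-\n^j$.)
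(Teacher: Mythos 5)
Your proof is correct and rests on the same two pillars as the paper's. For ``extendable $\Rightarrow$ isotopic'' you use exactly the paper's device: the rescaling $x\mapsto (f(tx)-f(0))/t$, smooth down to $t=0$ by Hadamard's lemma, followed by a path in the (connected) space of injective linear maps $\R^2\to\R^3$; the paper packages the last step slightly differently by first normalizing $df_0=(\id,0)$ with affine maps, but the content is identical. For ``isotopic $\Rightarrow$ extendable'' both arguments ultimately invoke Hirsch--Smale: the paper cites the flexibility of disc immersions (the parametric $h$-principle with moving boundary data) directly from Hirsch's 1959 paper, whereas you reduce it to the \emph{relative, non-parametric} $h$-principle for an immersed annulus with prescribed $1$-jets along its two boundary circles, verified by the homotopy of trihedra that the isotopy supplies, and then glue. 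Your version is more explicit and makes visible exactly which homotopy-theoretic datum is needed (the homotopy of loops in $\SO(3)$), at the cost of two routine but nonzero chores that the paper's citation absorbs: normalizing $f^0$ near $\pl D$ to agree with the collar, and checking that the formal solution over the annulus is holonomic near the boundary so the relative $h$-principle applies. Both chores are standard and you flag them, so the argument goes through.
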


The isotopy of two framed loops $(\gamma_i,\n_i)_{i=1,2}$ implies the isotopy of their associated trihedra $(\frakt_i,\frakn_i,\n_i)_{i=1,2}$.
As the fundamental group of $\SO(3)$ is isomorphic to $\bbZ/2\bbZ$, the topological space $C^\infty(\Circ,\SO(3))$ has two connected components.
The trihedron $(\frakt_0,\frakn_0,\n_0)$ associated with $(\gamma_0,\n_0)$ is a generator (i.e., a non-trivial element) of $\bbZ/2\bbZ$.
It turns out that the space of framed loops also has two connected components, i.e., that $(\gamma_i,\n_i)_{i=1,2}$ are isotopic if and only if $(\frakt_i,\frakn_i,\n_i)_{i=1,2}$ belong to the same connected component of $C^\infty(\Circ,\SO(3))$ \cite[Proposition~3.2.4, Corollary~3.2.5]{Nee16}.
Thus, we conclude:

\begin{corollary}
A smooth framed loop $(\gamma,\n)$ is extendable if and only if its associated trihedron $(\frakt,\frakn,\n)$ is non-trivial.
\end{corollary}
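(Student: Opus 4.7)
The plan is to read the corollary as an immediate synthesis of \propref{prop:extendabe_eq_nontrivial} with the topological classification of maps $\Circ\to\SO(3)$ and the Neeman classification of framed loops cited just before the statement. First, I would recall that $\pi_1(\SO(3))\cong\bbZ/2\bbZ$, so the space $C^\infty(\Circ,\SO(3))$ has exactly two path components under smooth homotopy: a trivial one, containing the constant maps, and a non-trivial one. The trihedron $(\frakt_0,\frakn_0,\n_0)$ associated to the standard framed loop $(\gamma_0,\n_0)$ is stated to be a generator of $\bbZ/2\bbZ$; hence a smooth trihedron $(\frakt,\frakn,\n)$ lies in the same connected component as $(\frakt_0,\frakn_0,\n_0)$ if and only if it is non-trivial.

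Second, I would invoke the Neeman result quoted just above the corollary: two framed loops are isotopic if and only if their associated trihedra lie in the same component of $C^\infty(\Circ,\SO(3))$. Combining this with \propref{prop:extendabe_eq_nontrivial}, which asserts that extendability of $(\gamma,\n)$ is equivalent to its isotopy to $(\gamma_0,\n_0)$, yields the chain of equivalences: $(\gamma,\n)$ is extendable if and only if $(\gamma,\n)$ is isotopic to $(\gamma_0,\n_0)$, if and only if $(\frakt,\frakn,\n)$ is homotopic in $C^\infty(\Circ,\SO(3))$ to $(\frakt_0,\frakn_0,\n_0)$, if and only if $(\frakt,\frakn,\n)$ is non-trivial.

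The only substantive step is the passage between isotopies of framed loops and homotopies of trihedra. The forward implication is straightforward, since an isotopy of framed loops directly produces a homotopy of the corresponding $\SO(3)$-valued maps via $\frakn = \n\times\frakt$. The converse---that any smooth homotopy of the trihedron through $\SO(3)$-valued maps can be realized by a genuine isotopy of framed loops (i.e., that orthogonality $\n\perp\gamma'$ is preserved throughout)---is the actual content imported from \cite{Nee16} and should be cited rather than reproved. With that citation in hand, the corollary is a one-line logical combination of the two preceding facts, and I would not anticipate any further obstacle beyond writing it cleanly.
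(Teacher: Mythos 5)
Your proposal is correct and matches the paper's reasoning precisely: the corollary is obtained by chaining \propref{prop:extendabe_eq_nontrivial} (extendable iff isotopic to $(\gamma_0,\n_0)$) with the classification of framed-loop isotopy classes via components of $C^\infty(\Circ,\SO(3))$ from \cite{Nee16}, and the fact that $(\frakt_0,\frakn_0,\n_0)$ represents the non-trivial class. Your remark that the nontrivial content is the converse passage from a homotopy of trihedra to an isotopy of framed loops, which is exactly what is imported from Neeman, is also the right thing to flag.
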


The natural space for our work are frames for which the associated trihedra have $\Hhalf$-regularity, that is, framed loops $(\gamma,\n)$ where $\gamma\in \HthreehalvesImm(\Circ;\R^3)$ and $\n\in \Hhalf(\Circ;\Sph)$, in which the orthogonality of $\gamma'$ and $\n$ holds almost-everywhere; for brevity, we will refer to such framed loops as \Emph{$\Hhalf$-framed loops}.
In Section~\ref{sec:Darboux} we show that the definition of extendability extends naturally to this regularity.
Thus we define:

\begin{definition}[Extendable $H^{1/2}$-loop]
\label{def:extendableH32}
Let $(\gamma,\n)$ be a $\Hhalf$-framed loop.
We say that it is \Emph{extendable} if its associated moving trihedron $(\frakt,\frakn,\n) \in \Hhalf(\Circ;\SO(3))$ is \Emph{non-trivial}, i.e., is in the same connected component of $\Hhalf(\Circ;\SO(3))$ as $(\frakt_0,\frakn_0,\n_0)$.
\end{definition}

A framed loop $(\gamma,\n)$  has an associated geodesic curvature, 
\[
\kg = \ip{\frakt'/|\gamma'|,\frakn}
\]
and consequently a total geodesic curvature,
\beq\label{eq:total_kg}
\int_{\Circ} \kg\,d\ell = \int_0^{2\pi} \ip{\frakt'(\vp),\frakn(\vp)}\, d\vp =\frakt'[\frakn],
\eeq
where $d\ell$ is the length element, and the last equality is understood as the action of the distribution represented by $\frakt'$ on $\frakn$.
This last equality shows that the total geodesic curvature extends to  $(\frakt,\frakn,\n)\in \Hhalf(\Circ;\SO(3))$, since  $\frakt'\in \Hminushalf(\Circ;\R^3)$ and  $\frakn\in \Hhalf(\Circ;\Sph)$. 
Note however that it is only the integral of the geodesic curvature that can be extended, and not its point values. 

Finally, the notion of extendability extends to immersions of $2$-dimensional domains in the following sense:

\begin{definition}[Extendable immersed surface]
\label{def:W22_extendable}
Let $\W\subset \R^2$ be an open bounded domain, and let $f\in \HtwoImm(\W;\R^3)$. Denote by $\n\in \Hone(\W;\Sph)$ its unit normal.
We say that $f$ is \Emph{extendable} if for every $C^{1,1}$ simple closed curve $\Gamma\subset \bW$, the trace $(f,\n)|_{\Gamma}$ is extendable according to Definition~\ref{def:extendableH32}.
\end{definition}

The $C^{1,1}$ regularity in the above definition is the regularity needed for the trace operator to map $H^{s}(\W)$ into $H^{s-1/2}(\Gamma)$ for $s=1,2$ \cite[Theorem~1.5.1.2]{Gri85}.

It follows from \lemref{lem:extendability_homotopy} below, that it is sufficient to check the extendability of $f\in \HtwoImm(\W;R^3)$ for $C^{1,1}$ generators of the fundamental group of $\W$.
In particular, if $\W$ is simply-connected then every $f$ is extendable, and if $\W$ has a $C^{1,1}$ boundary, then it is sufficient to check extendability for each connected component of the boundary.

\paragraph{Curvature norms on finitely-connected domains.}
Let $\W\subset\R^2$ be an open Lipschitz domain homeomorphic to a disc $D$ punctured by $m$ (possibly zero) closed, disjoint discs $\bD_i$.
Such domains are called \textbf{finitely-connected}.
Denote by $\Gamma_0 = \partial D$ and $\Gamma_i = \partial D_i$, $i=1,\dots,m$ the outer and inner boundaries. 
Let $f\in C^2_{\Imm}(\bW;\R^3)$, and denote by $\g=f^*\euc$ the induced metric. 
Suppose first that $\Gamma_i$ are $C^1$-regular. For $i=1,\dots,m$, set
\beq
K_i = 2\pi - \int_{\Gamma_i} \kg\, \iVolG
\qquad
i=1,\dots,m,
\label{eq:Ki}
\eeq
where $\kg$ is the geodesic curvature of $\Gamma_i$, and $\iVolG$ is the induced length form on $\Gamma_i$.
In conjunction with the Gauss-Bonnet theorem, the quantity $K_i$ represent the total Gaussian curvature ``enclosed in the $i$-th  hole".
If $\Gamma_i$ has only Lipschitz regularity, we can define $K_i$ by taking a smooth simple curve $\tGamma_i\subset \W$ homotopic to $\Gamma_i$, 
and define
\beq\label{eq:K_i_flexible}
K_i = 2\pi - \int_{\tGamma_i} \kg\, \iVolG - \int_{\tW} K \,\VolG,
\eeq
where $\tW$ is the annulus bounded by $\Gamma_i$ and $\tGamma_i$.
Similarly, $K_i$ is well-defined for $f\in \HtwoImm(\W;\R^3)$, as in this case $\g\in H^1\cap L^\infty$, hence one can find $\tGamma_i$ as above such that $\kg\in L^2(\tGamma_i)$,  and $K\in L^1(\W)$ by Gauss' Theorema Egregium (see the proof of \lemref{lem:immersion_convergence} for similar arguments).

With this definition, we think of the curvature $K$ as a function $K:\W\times \{1,\ldots,m\} \to \R$ and define the $L^1$ and $H^{-1}$ norms as follows: 
we define the total absolute curvature enclosed in $(\W,\g)$ by
\beq
\label{eq:LoneMC}
\|K\|_{\LoneMC} = \int_\W |K|\,\VolG +  \sum_{i=1}^m |K_i|.
\eeq
To define the $\HMoneMC$-norm, we let the space of test functions be the space $\HoneMC$ of $H^1$-functions vanishing on $\Gamma_0$ and constant on $\Gamma_i$, $i=1,\dots,m$, endowed with the homogenous norm $\|\cdot\|_{\dotHnorm}$. 
We define the norm of $K$ dual to $\HoneMC$ by 
\beq
\label{eq:HMoneMC}
\|K\|_{\HMoneMC} = \sup_{\vp\in \HoneMC} \frac{\int_\W \vp K\, \VolG + \sum_{i=1}^m K_i \vp|_{\Gamma_i}}{\|\vp\|_{\dotHnorm}}.
\eeq

As with the standard $H^{-1}$ norm, this norm equals to the $\dotHnorm$ of the solution $u_K$ of the Poisson equation $-\Delta_\g u =K$ with appropriate boundary conditions (this is the ``Riesz representer'' of $K$), see \eqref{eq:ELPDE} below.
In Appendix~\ref{sect:AppB} we study this problem, prove the existence of a solution and some standard estimates.
This duality provides an electrostatics interpretation of $\|K\|_{\HMoneMC}$: Viewing $K$ has a distribution of electric charge, the function $u_K$ is the electric potential within that domain assuming that its outer boundary is grounded and that its inner boundaries are  conductors loaded with a total charge $K_i$. 
From this perspective, $\|K\|_{\HMoneMC}^2$ is the corresponding electrostatic energy.  

\subsection{Main results: curvature estimates}\label{sec:results}
Our main result is the following:

\begin{theorem}
\label{thm:main_theorem}
Let $\W\subset\R^2$ be a finitely-connected, bounded, Lipschitz domain, and let $f\in \HtwoImm(\W;\R^3)$ be an extendable immersion satisfying \underline{one} of the following additional conditions:
\begin{enumerate}
\item $f\in C^1(\W;\R^3)$. 
\item $\g = f^*\euc\in W^{2,p}(\W)$ for $p>1$.
\end{enumerate}
Then
\[
\|d\n\|_{L^2(\W,\g)}^2 \ge \frac{4\pi - \|K\|_{\LoneMC}}{\|K\|_{\LoneMC}} \|K\|_{\HMoneMC}^2.
\]
\end{theorem}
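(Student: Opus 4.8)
The plan is to decompose the bending energy, via the coarea formula, into line integrals over the level sets of the Riesz representer of $K$, and to bound each of those line integrals from below using the framed-loop isoperimetric inequality \thmref{thm:GB+iso}. One may assume $0<\mu:=\|K\|_{\LoneMC}<4\pi$, since for $\mu\ge 4\pi$ the right-hand side is nonpositive (and $\mu=0$ forces $K\equiv 0$). I would take $u_K\in\HoneMC$ to be the Riesz representer of $K$ — the solution of $-\Delta_\g u_K=K$ in $\W$ with the boundary/flux conditions built into \eqref{eq:HMoneMC} — so that $\|K\|_{\HMoneMC}^2=\|\nabla u_K\|_{L^2(\W,\g)}^2$, and then foliate $\W$ by its level sets.

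Two identities will do most of the work. For a regular value $t$, write $\{u_K=t\}=\bigcup_k c_t^k$ as a finite union of Jordan curves, and let $a_k(t)$ be the total Gaussian curvature enclosed by $c_t^k$ — the integral of $K$ over the surface region it surrounds, plus the $K_i$ of the holes $\Gamma_i$ it surrounds. Along each connected level curve, $\nabla u_K$ is normal to $c_t^k$ and points consistently toward the $\{u_K>t\}$ side, so its flux through $c_t^k$ has constant sign; Gauss's law for $-\Delta_\g u_K=K$ (together with the flux normalization on the $\Gamma_i$) then gives
\[
\int_{c_t^k}|\nabla u_K|_\g\,d\ell=|a_k(t)|,\qquad\text{whence}\qquad\int_{\{u_K=t\}}|\nabla u_K|_\g\,d\ell=\sum_k|a_k(t)|=:q(t),
\]
and $|a_k(t)|\le\mu$ by the triangle inequality and \eqref{eq:LoneMC}. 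On the other hand, pairing the variational identity defining $u_K$ with $u_K$ and applying the layer-cake formula yields the second identity,
\[
\|K\|_{\HMoneMC}^2=\int_\W u_K\,K\,\VolG+\sum_i K_i\,u_K|_{\Gamma_i}=\int_\R q(t)\,dt.
\]

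With these in hand the estimate follows. By the coarea formula,
\[
\|d\n\|_{L^2(\W,\g)}^2\ \ge\ \int_\R\sum_k\int_{c_t^k}\frac{|d\n|_\g^2}{|\nabla u_K|_\g}\,d\ell\,dt.
\]
For a.e. $t$ and each component $c_t^k$ one has $\int_{c_t^k}|\nabla u_K|_\g\,d\ell=|a_k(t)|>0$; after homotoping $c_t^k$ to a nearby $C^{1,1}$ Jordan curve (legitimate by \lemref{lem:extendability_homotopy}), the trace $(f,\n)|_{c_t^k}$ is an \emph{extendable} $\Hhalf$-framed loop because $f$ is extendable, its total geodesic curvature equals $2\pi-a_k(t)$ by Gauss--Bonnet, and hence \thmref{thm:GB+iso} gives $\bigl(\int_{c_t^k}|d\n|_\g\,d\ell\bigr)^2\ge|a_k(t)|\bigl(4\pi-|a_k(t)|\bigr)$. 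Combining this with Cauchy--Schwarz on $c_t^k$ and the elementary bound $4\pi-s\ge\tfrac{4\pi-\mu}{\mu}s$ for $0\le s\le\mu$,
\[
\int_{c_t^k}\frac{|d\n|_\g^2}{|\nabla u_K|_\g}\,d\ell\ \ge\ \frac{\bigl(\int_{c_t^k}|d\n|_\g\,d\ell\bigr)^2}{\int_{c_t^k}|\nabla u_K|_\g\,d\ell}\ \ge\ 4\pi-|a_k(t)|\ \ge\ \frac{4\pi-\mu}{\mu}\,|a_k(t)|.
\]
Summing over $k$, integrating in $t$, and inserting the two identities gives $\|d\n\|_{L^2(\W,\g)}^2\ge\frac{4\pi-\mu}{\mu}\int_\R q(t)\,dt=\frac{4\pi-\mu}{\mu}\|K\|_{\HMoneMC}^2$.

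The algebra here is soft; I expect the real work to be in justifying the coarea/trace machinery at $\HtwoImm$ regularity: that for a.e.\ $t$ the set $\{u_K=t\}$ is a finite union of rectifiable Jordan curves with $|\nabla u_K|_\g>0$, that the traces of $f$ and $\n$ on them are genuine $\Hhalf$-framed loops (extendable after a small homotopy), and that the divergence-theorem and layer-cake identities hold for $u_K$ of the given regularity. This is exactly where the two alternative hypotheses enter: under (1), $f\in C^1$ makes $\n$ continuous and the traces tractable; under (2), $\g\in W^{2,p}$ with $p>1$ forces $K\in L^p$, hence $u_K\in W^{2,p}\hookrightarrow C^1$. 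A secondary bookkeeping difficulty in the multiply-connected case is tracking which inner boundaries $\Gamma_i$ each level curve encloses, so that $q(t)$ and the ``enclosed curvature'' of \thmref{thm:GB+iso} agree — which is precisely what the definitions \eqref{eq:LoneMC}--\eqref{eq:HMoneMC} are engineered to ensure.
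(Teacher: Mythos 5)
Your core computation is exactly the paper's proof of the smooth case: foliate by level sets of the Riesz representer, use the flux identity $\int_{c_t^k}|\nabla u_K|_\g\,d\ell=|a_k(t)|\le\mu$, Cauchy--Schwarz, Gauss--Bonnet plus \thmref{thm:GB+iso} on each level curve, the elementary bound $4\pi-s\ge\frac{4\pi-\mu}{\mu}s$ for $s\le\mu$, and coarea again to recover $\|\nabla u_K\|_{L^2}^2=\|K\|_{\HMoneMC}^2$. (The paper multiplies through by $\mu$ at the start rather than factoring it out at the end, but the chains are algebraically identical.) That part is correct.

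The genuine gap is in the passage to the actual hypotheses of the theorem, which you defer to a closing paragraph and, where you do sketch it, get wrong. Under hypothesis (1), $f\in\HtwoImm\cap C^1$ gives $\g\in\Hone\cap C^0$ only; the potential $u_K$ solving $-\Delta_\g u_K=K$ with such coefficients has no regularity that would make Sard's theorem or Gauss--Bonnet on its level sets available, so one cannot foliate by level sets of $u_K$ for the given $f$ at all. The paper instead approximates $f$ by smooth immersions $f_n$ in $\Htwo\cap C^1$, applies the smooth result to each $f_n$ (this needs \lemref{lem:immersion_convergence} to preserve the immersion property and extendability), and passes to the limit --- which is itself nontrivial, since $\|K_n\|_{\Hminusone_\text{MC}(\W,\g_n)}$ is not known to converge and one must argue via weak convergence that $\|K\|_{\HMoneMC}\le\liminf_n\|K_n\|_{\Hminusone_\text{MC}(\W,\g_n)}$. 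Under hypothesis (2), your claim that $u_K\in W^{2,p}\inj C^1$ is false for $p\in(1,2]$, and even $C^1$ (indeed $C^{1,1}$) would not suffice for Sard, which for maps $\R^2\to\R$ requires $C^2$. The paper's fix is to smooth the \emph{metric}, solve the Poisson problem for the smooth $\g_n$, foliate by level sets of the resulting smooth $u_n$, and control the resulting error terms using uniform $L^\infty$ bounds on $u_n$ from the elliptic theory of the floating-potential problem (Appendix~B). Finally, your proposal to ``homotope $c_t^k$ to a nearby $C^{1,1}$ Jordan curve'' does not repair a rough level set: the coarea formula and the flux identity live on the actual level curves, so replacing them by nearby homotopic curves breaks the identities the argument rests on; the homotopy lemma is only needed to transfer \emph{extendability} between curves, not regularity.
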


\begin{comments}
\begin{enumerate}[itemsep=0pt,label=(\alph*)]
\item For a simply-connected domain, a corollary is that $K\in H^{-1}$, which is not immediate for $f\in \HtwoImm(\W;\R^3)\cap C^1(\W;\R^3)$. 
Indeed, if $\|K\|_{L^1}<4\pi$ this is immediate, and otherwise one can partition the domain into finitely-many parts satisfying this bound.
\item For simply-connected domains, the triviality of the bound in \thmref{thm:main_theorem} for $\|K\|_{\LoneMC} \ge 4\pi$ can be relieved, 
as the bending energy can be bounded from below by the bending energy of a submanifold having total absolute curvature less than $4\pi$. 
It is however not clear how to exploit in an optimal way a partition of the surface into subdomains.
For multiply-connected domains, the prefactor $4\pi - \|K\|_{\LoneMC}$ is, in general, sharp; see comment at the end of Section~\ref{sec:cone}.
\item It is possible to extend our analysis to the non-extendable case. 
For example, if $\W$ is an annulus and $f$  is non-extendable, then the same bound holds, replacing $K_1$ with $K_1-2\pi$ in the definitions of $\|K\|_{\LoneMC}$ and $\|K\|_{\HMoneMC}$.
\item Neither of the two additional conditions implies the other:
If $f\in \HtwoImm\cap C^1$ then its metric is in general only in $\Hone\cap C^0$. 
On the other hand, a $\HtwoImm$ map may fail to be $C^1$, even if the induced metric is smooth, and a fortiori if it is only $W^{2,p}$ (except in special cases such as flat \cite{MP05} or elliptic \cite[Theorem~2.1]{HV18} metrics).
We do not know whether the theorem can be extended to $f\in \HtwoImm$ with a continuous metric, which seems to be the edge case in which the statement is still well-defined.
\item The bending energy, as well as both the $\|K\|_{\LoneMC}$ and $\|K\|_{\HMoneMC}$ norms are dimensionless. 
As a result there is not way to infer the functional dependence of the lower bound on those norms based on dimensional arguments.   
\end{enumerate}
\end{comments}

The proof of \thmref{thm:main_theorem} combines two parts: an isoperimetric inequality and an ``electrostatics'' analysis.
We derive an isoperimetric inequality pertinent to framed loops, which yields a lower bound for the derivative of $\n$ along the loop.
The ``electrostatics'' part involves the foliation of $\W$ with level sets of the ``electric potential'' $u_K$ discussed above (the Riesz representer of $K$), and the application of the isoperimetric inequality on these level sets.
The two together yield the desired lower bound for smooth enough immersions. 
The extension to $f\in \HtwoImm\cap C^1$  is obtained by a direct approximation. 
The extension to $f\in \HtwoImm$ maps with $W^{2,p}$ metrics is obtained by approximating the metric by smooth metrics, and using the level sets of a potential obtained by solving the Poisson equation using the approximate metric.

We believe that the first part of the proof, which is the following isoperimetric inequality, is of independent interest:

\begin{theorem}
\label{thm:GB+iso}
Let $(\gamma,\n)$ be an $\Hhalf$-framed loop.
Denote by $d\ell = |\gamma'(\vp)|\, d\vp$ the arclength form and by $D_s \n = \n'/|\gamma'|$ the arclength derivative of $\n$.
If $(\gamma,\n)$ is extendable, then
\beq
\brk{\int_{\Circ} |D_s \n|\, d\ell}^2 \ge 
\brk{4\pi - \Abs{2\pi - \int_{\Circ} \kg\, d\ell}} \Abs{2\pi - \int_{\Circ} \kg\, d\ell},
\label{eq:GB+iso}
\eeq
where $\int_{\Circ} \kg\, d\ell$ the total geodesic curvature of $\gamma$ as defined in \eqref{eq:total_kg}.
Otherwise, if $(\gamma,\n)$ is non-extendable, then
\beq
\brk{\int_{\Circ} |D_s\n|\, d\ell}^2 \ge 
\brk{4\pi - \Abs{\int_{\Circ} \kg\, d\ell}} \Abs{ \int_{\Circ} \kg\, d\ell}.
\label{eq:GB+iso2}
\eeq
\end{theorem}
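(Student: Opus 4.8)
The plan is to identify the left-hand side as the squared length of the spherical curve $\n$, to compute the area it ``encloses'' by a Gauss--Bonnet identity, and to conclude with the isoperimetric inequality on $\Sph$. Throughout, write $L := \int_{\Circ}|D_s\n|\,d\ell = \int_0^{2\pi}|\n'(\vp)|\,d\vp$, the length of $\n$ as a closed curve in $\Sph$, and $\kappa := \int_{\Circ}\kg\,d\ell = \frakt'[\frakn]$, its total geodesic curvature. If $L=\infty$ there is nothing to prove, so assume $L<\infty$, i.e. $\n\in W^{1,1}(\Circ;\Sph)$; likewise the right-hand sides of \eqref{eq:GB+iso} and \eqref{eq:GB+iso2} are nonpositive (hence the inequalities vacuous) once $|2\pi-\kappa|\ge 4\pi$, resp. $|\kappa|\ge 4\pi$, so we may assume $\kappa$ lies in a bounded range.

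\textbf{Step 1: a Gauss--Bonnet identity.} A filling $N\in H^1(D;\Sph)$ with $N|_{\pl D}=\n$ exists (since $\Hhalf(\Circ;\Sph)$ maps extend and $\Sph$ is simply connected), and its signed area $\int_D N^*\VolSph$ is independent of the choice of $N$ modulo $4\pi$ (two fillings glue to a map $\Sph\to\Sph$ of integer degree). Denote this class by $\mathcal{A}(\n)\in\R/4\pi\bbZ$. I claim
\[
\mathcal{A}(\n)\equiv 2\pi-\kappa \pmod{4\pi}\ \text{ if }(\gamma,\n)\text{ is extendable},\qquad \mathcal{A}(\n)\equiv -\kappa\pmod{4\pi}\ \text{ otherwise.}
\]
For a smooth extendable loop this is classical Gauss--Bonnet: by Proposition~\ref{prop:extendabe_eq_nontrivial} there is an immersed disc $f\in C_{\Imm}^\infty(D;\R^3)$ with boundary data $(\gamma,\n)$; its Gauss map $N$ fills $\n$, $N^*\VolSph=K\,\VolG$, and $\int_D K\,\VolG = 2\pi\chi(D)-\int_{\pl D}\kg\,d\ell = 2\pi-\kappa$. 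In general one argues topologically with the bundle $p\colon\SO(3)\to\Sph$, $R\mapsto Re_3$ (where $R=(\frakt,\frakn,\n)$). Since $H^2_{dR}(\SO(3))=0$, the form $p^*\VolSph$ is exact, with primitive a left-invariant $1$-form $\alpha$ whose pullback along any trihedron $R$ recovers $\int_{\Circ}R^*\alpha = \pm\kappa$ and whose restriction to the fibre $\SO(2)$ integrates to $\pm 2\pi$ over a generating loop. Lifting $N$ to $\tilde N\in H^1(D;\SO(3))$ ($D$ contractible) and applying Stokes gives $\int_D N^*\VolSph = \int_{\pl D}\tilde N^*\alpha$; comparing the boundary lift $\tilde N|_{\pl D}$ with the Darboux trihedron $(\frakt,\frakn,\n)$ — two lifts of $\n$ differing by an $\SO(2)$-valued loop of integer winding $w$ — yields $\int_D N^*\VolSph = \pm\kappa\pm 2\pi w$. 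The parity of $w$ is exactly the $\pi_1(\SO(3))\cong\bbZ/2\bbZ$ class separating the two components of framed loops (Definition~\ref{def:extendableH32}), and evaluating everything on the model loop $(\gamma_0,\n_0)$ — where $\mathcal{A}=0$ and $\kappa=2\pi$ — fixes the signs and the two constants $2\pi$ (extendable) and $0$ (non-extendable). Equivalently, one checks that $(\gamma,\n)\mapsto\mathcal{A}(\n)+\kappa\in\R/4\pi\bbZ$ has vanishing first variation (the classical first-variation-of-Gauss--Bonnet identity), hence is locally constant, and reads off its value on the two model loops.

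\textbf{Step 2: isoperimetry on $\Sph$ and conclusion.} A rectifiable closed curve on $\Sph$ of length $L$ bounding a filling of algebraic area $A\in\R$ satisfies $L^2\ge a(4\pi-a)$ for every $a\in\pm A+4\pi\bbZ$: for $a\notin[0,4\pi]$ this is trivial since the right-hand side is negative, and for the unique $a\in[0,4\pi]$ congruent to $\pm A$ it is the spherical isoperimetric inequality, classical for embedded curves and extending to the rectifiable (possibly non-embedded) case by the standard cut-and-paste/currents argument. Applying this to the filling $N$ from Step~1: in the extendable case take $a:=|2\pi-\kappa|\in\pm(2\pi-\kappa)+4\pi\bbZ = \pm\mathcal{A}(\n)+4\pi\bbZ$, giving $L^2\ge |2\pi-\kappa|\,(4\pi-|2\pi-\kappa|)$, which is \eqref{eq:GB+iso}; in the non-extendable case take $a:=|\kappa|\in\pm\mathcal{A}(\n)+4\pi\bbZ$, giving \eqref{eq:GB+iso2}.

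\textbf{Regularity and main obstacle.} For $\Hhalf$-framed loops, $\kappa=\frakt'[\frakn]$ and the extendability class are both continuous/stable, so one either works directly with the $\Hhalf$-extension machinery of Section~\ref{sec:Darboux} or, assuming $L<\infty$, approximates $(\gamma,\n)$ by smooth framed loops in the same component with $\kappa$ and $L$ converging and passes to the limit in \eqref{eq:GB+iso}/\eqref{eq:GB+iso2}. I expect the technical heart to be Step~1 outside the smooth extendable case: with no immersed disc available, the identity must be extracted from the topology of $\SO(3)\to\Sph$ while rigorously controlling the $\bbZ/2\bbZ$ bookkeeping that distinguishes the constants $2\pi$ and $0$, and keeping the argument valid at $\Hhalf$ regularity; the spherical isoperimetric inequality for non-embedded rectifiable curves, although it needs a short justification, is comparatively routine.
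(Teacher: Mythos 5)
Your overall architecture coincides with the paper's: interpret $\int_{\Circ}|D_s\n|\,d\ell$ as the length of the closed curve $\n(\Circ)\subset\Sph$, compute the (algebraic) area it encloses via Gauss--Bonnet, and conclude with a spherical isoperimetric inequality valid for non-simple curves, reducing to smooth framed loops by density. Your Step~1, however, is genuinely different from the paper's treatment: the paper handles the extendable case by actually realizing $(\gamma,\n)$ as the Darboux data of an immersed disc $f\in C^\infty_{\Imm}(\bD;\R^3)$ and applying Gauss--Bonnet to $f$, and handles the non-extendable case by concatenating with the model loop $(\gamma_0,\n_0)$ to force extendability; you instead compute the class $\mathcal{A}(\n)\in\R/4\pi\bbZ$ purely topologically from the circle bundle $\SO(3)\to\Sph$, with the $\bbZ/2\bbZ$ extendability class entering as the parity of the relative winding of two lifts. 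This unifies the two cases and shows that the immersed extension (which the paper's comments call ``essential to the proof'') can be replaced by a topological filling; the price is some care with lifting $H^1(D;\Sph)$ maps to $\SO(3)$, which disappears once you have reduced to smooth loops.

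The point where your proposal underestimates the difficulty is Step~2. The inequality $L^2\ge a(4\pi-a)$ for the \emph{algebraic} area class of a non-simple rectifiable spherical curve is not obtained by ``standard cut-and-paste'': resolving a self-intersection splits $L=L_1+L_2$ and $A=A_1+A_2$, and the cross term $2L_1L_2\ge -2a_1a_2$ fails to be automatic when the pieces enclose areas of opposite signs (and one must also track the choice of representative mod $4\pi$ for each piece). This inequality is exactly the paper's main imported input: Weiner's winding-number isoperimetric inequality (\thmref{th:WeinerChern}), $L^2\ge\tfrac12\int_{\Sph\times\Sph}w_\Gamma^2$, combined with the identification $w_\Gamma(p,q)=Q(p)-Q(q)$ for the degree $Q$ of a filling (\lemref{lem:7}) and the integrality argument $\int_\Sph Q^2\,\VolSph\ge\bigl|\int_\Sph Q\,\VolSph\bigr|$ (\lemref{lem:10}), which together give precisely $L^2\ge|A|(4\pi-|A|)$ for every representative $A$ of the class (\propref{prop:BG+iso_disc}). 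So your proof is completable, but only by invoking Weiner/Banchoff--Pohl and the degree argument rather than a routine classical fact; as written, the crux of the theorem is asserted rather than proved.
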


\begin{comments}
\begin{enumerate}[itemsep=0pt,label=(\alph*)]
\item The left-hand sides of \eqref{eq:GB+iso} and \eqref{eq:GB+iso2} are finite if and only if $\n\in W^{1,1}(\Circ;\Sph)$.
\item
By Definition~\ref{def:extandable_frame_smooth}, if $(\gamma,\n)$ is extendable and smooth, then there exists an immersion $f\in C^\infty_{\Imm}(\bD;\R^3)$ such that $\gamma = f|_{\dD}$ and $\n$ is the restriction of its Gauss map at the boundary.  
Let $f$ be any such immersion, and denote by $\g$ the induced metric and by $K$ the corresponding Gaussian curvature. 
It follows from the Gauss-Bonnet theorem \cite[p.~264]{Doc76} that \eqref{eq:GB+iso} can be rewritten in the form
\beq
\brk{\int_{\dD} |d\n|_{\g}\, \iVolG}^2 \ge 
\brk{4\pi - \Abs{\int_D K\,\VolG}} \Abs{\int_D K\,\VolG}.
\label{eq:GB+isoK}
\eeq
In fact, we first prove \eqref{eq:GB+isoK}, and the inequality \eqref{eq:GB+iso} is obtained by interpreting the moving trihedron induced by $(\gamma,\n)$ as the Darboux frame of the boundary of an immersed disc.
Interestingly, the extension of the moving trihedron into an immersed disc is essential to the proof, even though the end result, \eqref{eq:GB+iso}, is oblivious to the extension.
\end{enumerate}
\end{comments}

The proof of \thmref{thm:GB+iso} is obtained by first 
identifying the left-hand side in \eqref{eq:GB+iso} as the length of the image in $\Sph$ of the Gauss map.
Weiner's isoperimetric inequality for non-simple closed curves on the sphere \cite{Wei74}  lower bounds the length of that curve by an integral of an appropriate notion of its winding number; 
we then consider an immersion $f:D\to \R^3$ whose boundary is the given framed loop (assuming it is extendable), identifying the winding number of the boundary with the degree of the Gauss map, estimating the integral of the degree using Gauss's theorem, and then using Gauss--Bonnet to obtain the final result.
For non-extendable framed loops, we first make them extendable by concatenating them with the extendable framed loop $(\gamma_0,\n_0)$ given by \eqref{eq:trivial_frame} (in the language of $\pi_1(\SO(3))$, we concatenate a trivial loop with a generator).

\subsection{Main results: Burgers circuits estimates}

\thmref{thm:GB+iso} is vacuous when the integral of the geodesic curvature is too large; namely larger than $6\pi$ in the extendable case or $4\pi$ in the non-extendable case. 
Even more importantly, it is vacuous when the integral of the geodesic curvature is exactly $2\pi$, which 
is the case when  the total signed curvature is zero, as in the presence of curvature dipoles, as appearing in many  physical systems involving dislocations (see Section~\ref{subsec:examples}).
To treat these cases, we complement \thmref{thm:GB+iso} by another lower bound pertinent to framed loops, one based on the detection of torsion, rather than the detection of curvature.
Guided by the crystallographic theory of dislocations,
this bound is based on a notion of \Emph{Burgers vector} (for the relation between Burgers vectors and torsion see \cite{KM15,KM16,EKM20,KO20}).

A smooth framed loop $(\gamma,\n)$ induces a parallel transport of vectors perpendicular to $\n$, i.e., in the span of $\frakt$ and $\frakn$. A vector field $X\in C^\infty(\Circ;\R^3)$ of the form 
\[
X = a\, \frakt + b\, \frakn
\qquad
a,b\in C^\infty(\Circ),
\]
is parallel with respect to the induced moving trihedron if 
\beq
X' = \ip{X',\n}\n =  - \ip{X,\n'}\n,
\label{eq:parallel_transport}
\eeq
where the second equality follows from the fact that $\ip{X,\n}=0$.
This defines a parallel transport operator 
\[
\Pi^{\gamma,\n}_{\vp_1,\vp_2}: \span\{\frakt(\vp_1),\frakn(\vp_1)\} \to
\span\{\frakt(\vp_2),\frakn(\vp_2)\}
\]
transporting vectors normal to $\n$ from $\vp_1\in\Circ$ to $\vp_2\in\Circ$ according to \eqref{eq:parallel_transport} along the shortest negatively-oriented path.
That is, for $v=a_0\, \frakt(\vp_1)+ b_0\, \frakn(\vp_1)$, the vector $\Pi^{\gamma,\n}_{\vp_1,\vp_2}v$ is the evaluation at $\vp_2$ of the solution to \eqref{eq:parallel_transport} with initial condition $v$.
It can be shown that the parallel transport depends, in fact, only on the geodesic curvature $\kg$, and, if $(\gamma,\n)$ induces a Darboux frame at the boundary of an immersed surface, the parallel transport defined by \eqref{eq:parallel_transport} coincides with that of the surface.

With that, we assign a notion of a Burgers vector to framed loops, as the integral of the velocity $\gamma'$ of the loop, which is parallel-transported from its point of evaluation to the base point $\vp_0$ (see Figure~\ref{fig:Burgers}):

\begin{definition}[Burgers vector]
\label{def:burgers}
The Burgers vector associated with a smooth framed loop $(\gamma,\n)$ relative to a base point $\vp_0\in\Circ$ is  a vector in $\span\{\frakt(\vp_0),\frakn(\vp_0)\}\subset\R^3$ given by 
\beq
B_{\gamma,\n,\vp_0} = \int_{\Circ} \Pi^{\gamma,\n}_{\vp,\vp_0} \gamma'(\vp)\, d\vp.
\label{eq:def_burgers}
\eeq
\end{definition}

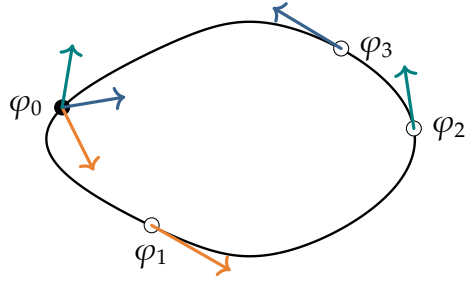
\begin{figure}[h]
\[
\begin{tikzpicture}[scale=1.4, line cap=round, line join=round]

  \def\a{1.6}   
  \def\b{1.1}   
  \def\dist{0.15}  

  \draw[line width=0.9pt]
    plot[domain=0:360, samples=240, variable=\t, smooth]
      ({\a*cos(\t) + \dist*cos(3*\t)},
       {\b*sin(\t) + 0.6*\dist*sin(2*\t)});
       
  \draw (1.05,0.86) node[circle,draw=black, fill = white,inner sep=2pt, label=right:$\vp_3$] {};
  \draw (1.74,0.1) node[circle,draw=black, fill = white,inner sep=2pt, label=right:$\vp_2$] {};
  \draw (-0.75,-0.82) node[circle,draw=black, fill = white,inner sep=2pt, label=below:$\vp_1$] {};
  \draw (-1.6,0.3) node[circle,draw=black, fill = black,inner sep=2pt, label=left:$\vp_0$] {};
	\draw[->, color = ocre, line width = 1.2pt] (-0.75,-0.82) -- (0.,-1.25);
	\draw[->, color = ocre, line width = 1.2pt] (-1.6,0.3) -- (-1.3,-0.3);
	\draw[->, color = warmblue, line width = 1.2pt] (1.05,0.86) -- (0.4,1.25);
	\draw[->, color = warmblue, line width = 1.2pt] (-1.6,0.3) -- (-1,0.4);
	\draw[->, color = teal, line width = 1.2pt] (1.74,0.1) -- (1.65,0.7);
	\draw[->, color = teal, line width = 1.2pt] (-1.6,0.3) -- (-1.50,0.9);

\end{tikzpicture}
\]
\caption{The Burgers vector of the curve is calculated by integrating its velocity parallel-transported to a reference point $\vp_0\in\Circ$.}
\label{fig:Burgers}
\end{figure}

Since $\Pi^{\gamma,\n}_{\vp_0,\vp_1}\circ \Pi^{\gamma,\n}_{\vp,\vp_0} = \Pi^{\gamma,\n}_{\vp,\vp_1}$ for every $\vp,\vp_0,\vp_1\in\Circ$, it follows that 
\[
B_{\gamma,\n,\vp_1} = \Pi^{\gamma,\n}_{\vp_0,\vp_1} B_{\gamma,\n,\vp_0},
\]
hence, since parallel transport is norm preserving, the magnitude of the Burgers vector is independent of the base point.
Moreover, it follows immediately from \eqref{eq:def_burgers} that $|B_{\gamma,\n,\vp_0}| \le \Len(\gamma(\Circ))$.

While the parallel transport operator is not well-defined pointwise for $\Hhalf$-framed loops, the notion of a Burgers vector is:

\begin{proposition}\label{prop:Burgers_Hhalf}
An equivalent definition for the Burgers vector associated with a framed loop $(\gamma,\n)$ is 
\[
B_{\gamma,\n,0} = \int_{\Circ}  \ip{\calL(\vp),\n'(\vp)} \n(\vp) \, d\vp  = \n'[\calL\otimes \n],
\]
where 
\[
\calL(\vp) = \int_{\vp}^{2\pi} \Pi^{\gamma,\n}_{\vp',\vp} \gamma'(\vp')\, d\vp',
\]
and $\n'$ is viewed as a distribution acting on the tensor-valued function $\calL\otimes \n$.
This definition extends to $\Hhalf$-framed loops, as $\n\in H^{1/2}$ and $\calL\in H^{3/2}$.
\end{proposition}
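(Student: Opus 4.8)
The plan is to prove the identity first in the smooth category by a direct computation, and then to show that every object appearing in it extends continuously to $\Hhalf$-framed loops.

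\textbf{Smooth case.} With $\calL(\vp)=\int_\vp^{2\pi}\Pi^{\gamma,\n}_{\vp',\vp}\gamma'(\vp')\,d\vp'$ as in the statement, I would first establish that $\calL$ solves the linear ODE
\[
\calL'(\vp)=-\gamma'(\vp)-\ip{\calL(\vp),\n'(\vp)}\,\n(\vp),\qquad \calL(2\pi)=0 .
\]
Differentiating under the integral sign produces two contributions: the boundary term at $\vp'=\vp$, which is $-\Pi^{\gamma,\n}_{\vp,\vp}\gamma'(\vp)=-\gamma'(\vp)$; and the term from the $\vp$-dependence of the transport, for which one uses that, for fixed $v$, the curve $\vp\mapsto\Pi^{\gamma,\n}_{\vp',\vp}v$ is a parallel field, so by \eqref{eq:parallel_transport} its $\vp$-derivative equals $-\ip{\Pi^{\gamma,\n}_{\vp',\vp}v,\n'(\vp)}\n(\vp)$; substituting $v=\gamma'(\vp')$, integrating in $\vp'$, and pulling the ($\vp$-fixed) integral inside the bracket yields $-\ip{\calL(\vp),\n'(\vp)}\n(\vp)$. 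Integrating the ODE over $\Circ$, the left-hand side is $\calL(2\pi)-\calL(0)=-B_{\gamma,\n,0}$ by Definition~\ref{def:burgers} (since $\calL(2\pi)=0$ and $\calL(0)=\int_\Circ\Pi^{\gamma,\n}_{\vp',0}\gamma'(\vp')\,d\vp'=B_{\gamma,\n,0}$), while on the right-hand side $\int_\Circ\gamma'\,d\vp=0$ because $\gamma$ is closed, so $-B_{\gamma,\n,0}=-\int_\Circ\ip{\calL,\n'}\n\,d\vp$, which is the asserted identity. The rewriting $\int_\Circ\ip{\calL,\n'}\n\,d\vp=\n'[\calL\otimes\n]$ is just the componentwise reading $\bigl(\int_\Circ\ip{\calL,\n'}\n\,d\vp\bigr)_j=\sum_i\int_\Circ\calL_i\,(\n_i)'\,\n_j\,d\vp$, i.e. the distribution $\n'$ evaluated on the tensor field $\calL\otimes\n$.

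\textbf{Extension to $\Hhalf$-framed loops.} Here $\n\in H^{1/2}\cap L^\infty$ and $\n'\in\Hminushalf$, so what remains is (a) to give meaning to $\calL$ and (b) to verify that $\calL\otimes\n$ is an admissible argument for $\n'$, i.e. that it lies in $H^{1/2}$. Although the pointwise transport $\Pi^{\gamma,\n}_{\vp',\vp}$ is not defined at this regularity — only its holonomy over all of $\Circ$, equivalently the total geodesic curvature, is, cf. the remark after \eqref{eq:total_kg} — the partial Burgers vector itself is well-behaved: solving the ODE of the smooth case in a trivialisation of the normal bundle exhibits $\calL$ as the moving frame (together with the holonomy phase) applied to an antiderivative of an $H^{1/2}$ function, which yields the needed regularity $\calL\in H^{3/2}$, along with the a priori bound $|\calL|\le\Len(\gamma(\Circ))$. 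Equivalently, one first proves this bound for smooth framed loops and then passes to the limit along smooth framed loops approximating $(\gamma,\n)$ in $H^{3/2}\times H^{1/2}$ compatibly with the immersion and orthogonality constraints. Once $\calL\in H^{3/2}$ is in hand, $\calL\otimes\n\in H^{3/2}\cdot H^{1/2}\subset H^{1/2}$ by the standard one-dimensional fractional product estimate, so $\n'[\calL\otimes\n]$ is a well-defined vector in $\R^3$; and since solving for $\calL$, forming $\calL\otimes\n$, and pairing with $\n'$ are each continuous in the stated topologies, the smooth identity passes to the limit.

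\textbf{Main obstacle.} The delicate point is part (b): controlling $\calL$ (and the auxiliary quantities built from the moving trihedron, such as the holonomy phase) at the borderline smoothness $1/2$ of the frame, and checking the product/multiplier estimates at index $1/2$ — together with the density of smooth framed loops in a manner compatible with the $\SO(3)$-valued and immersion constraints — so that $\calL\otimes\n$ is a legitimate test field for $\n'\in\Hminushalf$. By contrast, the identity itself in the smooth case is a routine differentiation argument, so the whole proposition reduces to the regularity bookkeeping just described.
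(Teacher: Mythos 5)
Your proof is correct and follows essentially the same route as the paper: the smooth identity comes from the parallel-transport equation \eqref{eq:parallel_transport} together with $\int_{\Circ}\gamma'\,d\vp=0$ (you integrate an ODE for $\calL$ itself, while the paper integrates the transport of each individual $\gamma'(\vp)$ and applies Fubini --- the same computation), and the extension to $\Hhalf$-framed loops is by density of smooth framed loops plus the product estimate giving $\calL\otimes\n\in\Hhalf$. If anything, you are more explicit than the paper about why $\calL$ is well defined and lies in $H^{3/2}$ at the low regularity.
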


The proof is given in Section~\ref{sec:Darboux}.
The following theorem is an analog of \thmref{thm:GB+iso}, with the Burgers vector rather than an enclosed curvature as a source of non-flatness: 

\begin{theorem}
\label{thm:burgers}
Let $(\gamma,\n)$ be an $\Hhalf$-framed loop. 
Then,
\beq
\int_{\Circ} |D_s \n|\, d\ell \ge \frac{|B_{\gamma,\n,\vp_0}|}{\Len(\gamma(\Circ))},
\label{eq:burgers_bound}
\eeq
where $D_s\n$ and $d\ell$ are defined as in \thmref{thm:GB+iso}.
\end{theorem}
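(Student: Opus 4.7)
The plan is to exploit the alternative representation of the Burgers vector given by Proposition~\ref{prop:Burgers_Hhalf}, which rewrites $B_{\gamma,\n,0}$ as an integral against $\n'$ rather than against $\gamma'$. This is the natural move, since the right-hand side of the desired bound measures the variation of $\n$ along the loop; the idea is morally an ``integration by parts'' that transfers the derivative from the position curve to the frame.

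More concretely, I would first reduce to the case $\n\in W^{1,1}(\Circ;\Sph)$, since otherwise $\int_\Circ |D_s\n|\,d\ell = +\infty$ and there is nothing to prove. Under this assumption $\n'$ is a genuine $L^1$ function, and the identity
\[
B_{\gamma,\n,0} = \int_{\Circ} \ip{\calL(\vp),\n'(\vp)}\,\n(\vp)\,d\vp
\]
is an honest Lebesgue integral rather than a distributional pairing. Taking norms, using $|\n(\vp)|=1$, and changing variables via $|\n'(\vp)|\,d\vp = |D_s\n|\,d\ell$, I obtain
\[
|B_{\gamma,\n,0}| \le \int_\Circ |\calL(\vp)|\,|D_s\n|\,d\ell.
\]
The second key step is to prove the pointwise bound $|\calL(\vp)| \le \Len(\gamma(\Circ))$. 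This rests on the fact that the parallel transport $\Pi^{\gamma,\n}_{\vp',\vp}$ is an isometry between $\n(\vp')^\perp$ and $\n(\vp)^\perp$: if $X$ is parallel along the frame, then $\frac{d}{d\vp}|X|^2 = 2\ip{X,X'} = -2\ip{X,\n'}\ip{X,\n} = 0$, using \eqref{eq:parallel_transport} and $X\perp\n$. Therefore $|\Pi^{\gamma,\n}_{\vp',\vp}\gamma'(\vp')| = |\gamma'(\vp')|$, and
\[
|\calL(\vp)| \le \int_\vp^{2\pi} |\gamma'(\vp')|\,d\vp' \le \Len(\gamma(\Circ)).
\]
Combining these estimates and invoking the $\vp_0$-independence of $|B_{\gamma,\n,\vp_0}|$ recorded after Definition~\ref{def:burgers} yields the claim.

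The main obstacle I expect is the low-regularity bookkeeping. The transport equation \eqref{eq:parallel_transport} has coefficient $\n'$, which is only $L^1$ after the reduction (and merely distributional for bare $\Hhalf$-framed loops), so one must justify both the existence of $\Pi$ and the pointwise isometry identity. For $\n\in W^{1,1}$ this follows from Carathéodory existence together with the a.e.~validity of the computation above, which is enough because the bound on $|\calL|$ is integrated against the $L^1$ measure $|D_s\n|\,d\ell$. For general $\Hhalf$-framed loops the framework of Proposition~\ref{prop:Burgers_Hhalf} already furnishes $\calL\in H^{3/2}$, so the inequality can be concluded by approximating by smooth framed loops and passing to the limit, using that both sides of \eqref{eq:burgers_bound} are continuous under such approximation.
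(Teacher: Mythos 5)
Your proposal is correct and follows essentially the same route as the paper: both reduce to the case $\n\in W^{1,1}$ (the paper via smooth approximation through Lemma~\ref{lem:H32_W11_frames}, which is the same density result you invoke at the end), both apply the representation $B_{\gamma,\n,0}=\int_\Circ\ip{\calL,\n'}\n\,d\vp$ from Proposition~\ref{prop:Burgers_Hhalf}, and both close the argument with the pointwise bound $|\calL|\le\Len(\gamma(\Circ))$ coming from the norm-preserving property of the parallel transport. The only cosmetic difference is that you additionally sketch a direct Carath\'eodory argument in the $W^{1,1}$ case where the paper simply approximates by smooth loops; the key estimates are identical.
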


The proof is given in \secref{sec:burgers}. Note the absence of any topological constraint in comparison with \thmref{thm:GB+iso}.
Given a immersion $f\in \HtwoImm(\W;\R^3)\cap W^{1,\infty}(\W;\R^3)$, one can foliate $\W$ with simple, closed curves, and apply \eqref{eq:burgers_bound} to each curve in conjunction with the coarea formula, obtaining a lower bound for $\|d\n\|_{L^2(\W,\g)}^2$. 
Unlike in the proof of \thmref{thm:main_theorem} where a specific foliation is constructed in order to obtain a negative Sobolev norm, we do not have a prescribed procedure for foliating the surface for the Burgers vector based bound.

\subsection{Examples}\label{subsec:examples}

\paragraph{Cones.} 
Consider a cone with angular deficit/excess $\alpha \in (-\infty,2\pi)$, inner radius $r_0 > 0$ and outer radius $R > 0$ (Figure~\ref{fig:cone} depicting the case of $\alpha>0$).
See Section~\ref{sec:cone} for an explicit definition. 
$\alpha$ is also the total curvature enclosed in the hole as defined by \eqref{eq:Ki}. 
The limit $r_0\to 0$ can be viewed as the prototypical case of curvature concentration, whose energy blowup the trivial bound \eqref{eq:L_1_bound} does not detect.
The following theorem exemplifies our main results for this geometry: 

\newcommand{\drawsector}[6]{%
  \begin{scope}
    \fill[#6] (#1,#2) -- 
      ({#1 + #3*cos(#4)},{#2 + #3*sin(#4)}) arc (#4:#5:#3) -- cycle;
  \end{scope}
}

\begin{figure}[h]
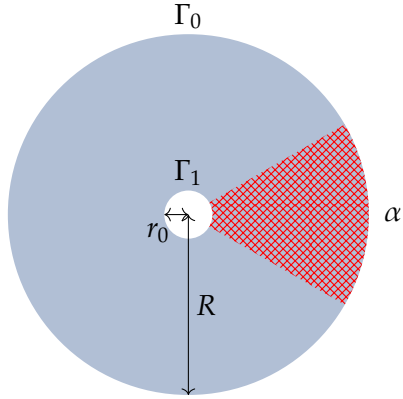

\begin{center}
\btkz[scale=0.8]
\fill[color=warmblue!30] (0,0) circle (3cm);
\drawsector{0}{0}{3}{-30}{30}{pattern=crosshatch, pattern color=red};
\fill[color=white] (0,0) circle (0.4cm);
\draw[<->] (0, 0) -- (0,-3); 
\draw[<->] (0, 0) -- (-0.4,0); 
\node at (0.3,-1.5){{\small $R$}};
\node at (-0.5,-0.3){{\small $r_0$}};
\node at (3.4,0){{\small $\alpha$}};
\node at (0.,0.7){{\small $\Gamma_1$}};
\node at (0,3.3){{\small $\Gamma_0$}};
\etkz 
\end{center}
\caption{A cone geometry with inner radius $r_0$, outer radius $R$, and a deficit angle $\alpha>0$; the edges of the removed sector are glued to each other.}
\label{fig:cone}
\end{figure}

\begin{theorem}\label{thm:cone}
Let $(\W,\g)$ be a cone with total curvature $\alpha\in (-\infty,2\pi)$, inner radius $r_0$ and outer radius $R$.
Let $f\in \HtwoImm(\W;\R^3)$ be an isometric immersion of $(\W,\g)$.
Then:
\begin{enumerate}
\item Assuming $f$ is extendable, \thmref{thm:main_theorem} yields
	\beq
	\|d\n\|_{L^2(\M)}^2 \ge \brk{4\pi -  |\alpha|} 
	 \frac{|\alpha|}{2\pi - \alpha} \log\frac{R}{r_0}.
	\label{eq:cone_lb_curvature}
	\eeq
\item Applying \thmref{thm:burgers} on a foliation of $\W$ yields 
	\beq
	\|d\n\|_{L^2(\M)}^2 \ge \frac{4\sin^2(\alpha/2)}{(2\pi- \alpha)^3} \, \log \frac{R}{r_0}.
	\label{eq:cone_lb_Burgers}
	\eeq
\end{enumerate}
\end{theorem}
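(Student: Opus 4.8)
The plan is to apply the two main abstract theorems to the explicit cone geometry, so the work reduces to computing the relevant geometric quantities on the foliation by circles $r = \text{const}$.

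For part (1): the cone is a multiply-connected (annular) domain with outer boundary $\Gamma_0$ at radius $R$ and inner boundary $\Gamma_1$ at radius $r_0$, and the total curvature enclosed in the hole is $K_1 = \alpha$, with $K \equiv 0$ on $\W$ itself (the cone is flat away from the tip). Hence $\|K\|_{\LoneMC} = |\alpha|$, and it remains to compute $\|K\|_{\HMoneMC}$. I would solve the Poisson problem $-\Delta_\g u_K = 0$ on $\W$ with $u_K|_{\Gamma_0} = 0$ and $u_K|_{\Gamma_1} = c$, where the constant $c$ is fixed by the total-charge condition $\int_{\Gamma_1}\partial_\nu u_K = K_1 = \alpha$ (in the sense of \eqref{eq:HMoneMC}). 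In the flat cone metric — which is $ds^2 = d\rho^2 + (1-\alpha/2\pi)^2\rho^2\, d\theta^2$ in intrinsic polar coordinates — the rotationally symmetric harmonic function is a multiple of $\log\rho$, so $u_K = \frac{\alpha}{2\pi(1-\alpha/2\pi)}\log(\rho/R)$ up to the correct normalization; its Dirichlet energy computes directly to $\|K\|_{\HMoneMC}^2 = \frac{\alpha^2}{2\pi(1-\alpha/2\pi)}\log(R/r_0) = \frac{\alpha^2}{2\pi - \alpha}\,\log(R/r_0)$. Substituting into \thmref{thm:main_theorem} gives exactly \eqref{eq:cone_lb_curvature}. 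The only subtlety is checking that $f\in\HtwoImm(\W;\R^3)$ of a flat cone metric satisfies the regularity hypotheses of \thmref{thm:main_theorem} — here condition (2) applies since the cone metric is smooth, a fortiori $W^{2,p}$.

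For part (2): I would foliate $\W$ by the coordinate circles $\Gamma_\rho$ of intrinsic radius $\rho\in(r_0,R)$, each of which (together with the restriction of the normal $\n$ of $f$) is an $\Hhalf$-framed loop. On each such loop \thmref{thm:burgers} gives $\int_{\Gamma_\rho}|D_s\n|\,d\ell \ge |B_{\gamma,\n,\vp_0}|/\Len(\Gamma_\rho)$. The key computation is the Burgers vector of $\Gamma_\rho$: since the cone is flat, parallel transport once around $\Gamma_\rho$ is rotation by the holonomy angle, which equals the enclosed curvature $\alpha$; the Burgers vector is then the integral of a rotating unit-ish tangent vector, and its magnitude comes out to $|B| = \frac{2\rho}{1-\alpha/2\pi}\,|\sin(\alpha/2)| = \frac{4\pi\rho}{2\pi-\alpha}|\sin(\alpha/2)|$, while $\Len(\Gamma_\rho) = 2\pi(1-\alpha/2\pi)\rho = (2\pi-\alpha)\rho$. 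Thus the right-hand side is $\frac{4\pi|\sin(\alpha/2)|}{(2\pi-\alpha)^2}$, independent of $\rho$. Then by Cauchy--Schwarz and the coarea formula along the foliation,
\[
\|d\n\|_{L^2(\W,\g)}^2 = \int_{r_0}^R \!\!\int_{\Gamma_\rho} |D_s\n|^2 \, d\ell \, d\rho
\ge \int_{r_0}^R \frac{1}{\Len(\Gamma_\rho)}\brk{\int_{\Gamma_\rho}|D_s\n|\,d\ell}^2 d\rho
\ge \int_{r_0}^R \frac{1}{(2\pi-\alpha)\rho}\cdot\frac{16\pi^2\sin^2(\alpha/2)}{(2\pi-\alpha)^4}\, d\rho,
\]
which evaluates to $\frac{16\pi^2\sin^2(\alpha/2)}{(2\pi-\alpha)^5}\log(R/r_0)$. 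Comparing with the claimed \eqref{eq:cone_lb_Burgers}, which reads $\frac{4\sin^2(\alpha/2)}{(2\pi-\alpha)^3}\log(R/r_0)$, I expect to have to recheck the precise normalization of the angular coordinate and hence the factors of $2\pi$ in $\Len(\Gamma_\rho)$ and in $|B|$; most likely the cone in Section~\ref{sec:cone} is parametrized so that $\Len(\Gamma_\rho) = 2\pi\rho$ and the holonomy/Burgers computation produces $|B| = 2\rho|\sin(\alpha/2)|/(1-\alpha/2\pi)$ only on the reference-metric side, with the foliation parameter carrying the extra $(2\pi-\alpha)$ factors — reconciling these bookkeeping conventions against the explicit definition in Section~\ref{sec:cone} is the main place care is needed.

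The main obstacle is not conceptual but computational bookkeeping: getting the Burgers vector magnitude and the circle lengths exactly right (all the $2\pi-\alpha$ factors), since the final constant in \eqref{eq:cone_lb_Burgers} is sensitive to them; and, for the $\Hhalf$-framed-loop application, confirming that the coordinate circles of an $\HtwoImm$ isometric immersion do carry traces $(f,\n)|_{\Gamma_\rho}$ of the required $\Hthreehalves\times\Hhalf$ regularity for a.e.\ $\rho$, which follows from a Fubini/trace argument as in the proof of \thmref{thm:main_theorem} but should be stated.
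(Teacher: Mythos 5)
Your part (1) is correct and is exactly the paper's argument: the same radially symmetric floating-potential solution $u_K=-\frac{\alpha}{2\pi c_\alpha}\log(r/R)$ with $c_\alpha=1-\alpha/2\pi$, the same Dirichlet energy $\|K\|_{\HMoneMC}^2=\frac{\alpha^2}{2\pi-\alpha}\log(R/r_0)$, and direct substitution into \thmref{thm:main_theorem}.

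Part (2) also follows the paper's route (foliate by the circles $r=\mathrm{const}$, bound $\int|D_s\n|\,d\ell$ from below by $|B|/\Len$ via \thmref{thm:burgers}, then Cauchy--Schwarz and coarea), but your Burgers-vector value is wrong by a factor of $1/c_\alpha$, and your proposed reconciliation points at the wrong quantity. In the metric $dr^2+(c_\alpha r)^2\,d\vp^2$ the velocity $\partial_\vp$ of the circle $\gamma(t)=(r,t)$ has norm $c_\alpha r$ and rotates in a parallel frame with angular speed $c_\alpha$, so
\[
|B|=\Abs{\int_0^{2\pi} c_\alpha r\, e^{i c_\alpha t}\,dt}=2r\,|\sin(\pi c_\alpha)|=2r\,|\sin(\alpha/2)|;
\]
equivalently, the circle (geodesic curvature $1/r$, length $(2\pi-\alpha)r$) develops onto a planar arc of radius $r$ subtending angle $2\pi-\alpha$, whose chord has length $2r\sin(\alpha/2)$. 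There is no $1/(1-\alpha/2\pi)$ factor. The circle length $(2\pi-\alpha)r$ you use is exactly the paper's convention, so the correct ratio is $|B|/\Len(\Gamma_r)=2|\sin(\alpha/2)|/(2\pi-\alpha)$, independent of $r$, and then
\[
\int_{r_0}^R\frac{1}{(2\pi-\alpha)r}\cdot\frac{4\sin^2(\alpha/2)}{(2\pi-\alpha)^2}\,dr=\frac{4\sin^2(\alpha/2)}{(2\pi-\alpha)^3}\log\frac{R}{r_0}
\]
gives precisely the stated constant. A minor further point: your opening coarea step should be an inequality, $\|d\n\|_{L^2(\W,\g)}^2\ge\int_{r_0}^R\int_{\Gamma_r}|D_s\n|^2\,d\ell\,dr$, since $|d\n|\ge|D_s\n|$ along each leaf.
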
  
The proof is given in Section~\ref{sec:cone}.
Note that both bounds diverge like $\log(R/r_0)$ as $R/r_0\to \infty$.
For small values of $|\alpha|$, the bound \eqref{eq:cone_lb_curvature} is superior to \eqref{eq:cone_lb_Burgers}, as it grows like $|\alpha|$, rather then $|\alpha|^2$. This linear dependence on $|\alpha|$ is also tight.
However, \eqref{eq:cone_lb_curvature} breaks down for $\alpha\le -4\pi$, whereas \eqref{eq:cone_lb_Burgers} does not, unless $\alpha=-2\pi k$ for some $k\in \bbN$.
Indeed, for $\alpha=-2\pi k$ there exists smooth immersions with zero bending.
The $\log(R/r_0)$ divergence and the $|\alpha|$ growth for small curvatures are both tight, as explicit constructions show (in the case of $\alpha>0$, the ``standard'' embedded cone); this will be detailed in an upcoming work.
We discuss this, as well as consequences for the tightness of the pre-factors in Theorems~\ref{thm:main_theorem}--\ref{thm:GB+iso}, in Section~\ref{sec:cone}. 

\paragraph{Dislocations: cone dipoles.}
We proceed to consider isometric immersions of a surface with a conical dipole, which is a continuum analog of a lattice exhibiting an edge-dislocation. 
The geometry is depicted in \figref{fig:dislocation}:
Take a Euclidean disc of radius $R$, with at its center two circular holes of radius $r_0$ at a distance $r_0$ apart. With one of the holes as its center, remove a sector such to form a cone with angle deficit $\alpha$. Then, cut a straight segment from the outer boundary up to the boundary of the other hole, and insert that same sector such to form an E-cone with excess angle $\alpha$.

\begin{figure}[h]
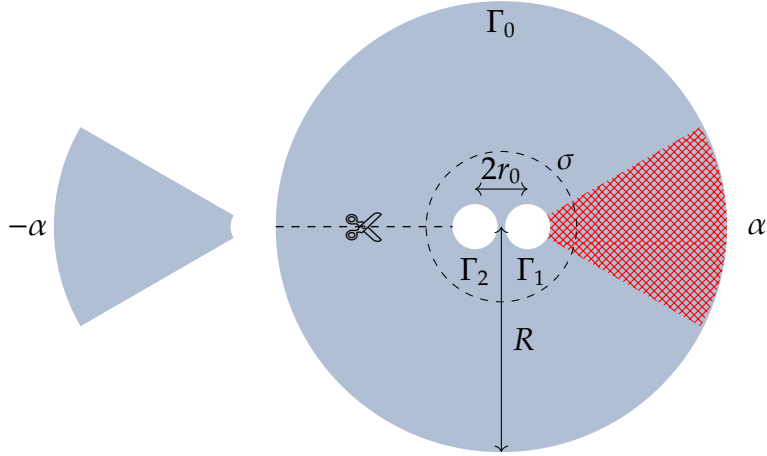

\[
\btkz
\fill[color=warmblue!30] (0,0) circle (3cm);
\drawsector{0.35}{0}{2.65}{-30}{30}{pattern=crosshatch, pattern color=red};
\fill[color=white] (-0.35,0) circle (0.3cm);
\fill[color=white] (+0.35,0) circle (0.3cm);
\drawsector{-3.3}{0}{2.65}{150}{210}{color = warmblue!30};
\fill[color=white] (-3.3,0) circle (0.3cm);
\draw[dashed, line width=0.5pt, postaction={
    decorate,
    decoration={
        markings,
        mark=at position 0.5 with {\node[rotate=0, inner sep=0pt] {\ScissorHollowRight};}
    }
}] (-3,0) -- (-0.65,0);
\draw[<->] (-0.35, 0.5) -- (0.35,0.5); 
\node at (0,0.75){$2r_0$};
\draw[<->] (0, 0) -- (0,-3); 
\node at (0.3,-1.5){$R$};
\node at (3.4,0){$\alpha$};
\node at (-6.3,0){$-\alpha$};
\node at (0.4,-0.6){$\Gamma_1$};
\node at (-0.35,-0.6){$\Gamma_2$};
\node at (0,2.7){$\Gamma_0$};
\draw[dashed] (0,0) circle (1cm);
\node at (0.86,0.86){$\sigma$};

\etkz 
\]
\caption{The geometry of an edge-dislocation as described in the text.}
\label{fig:dislocation}
\end{figure}
 
The result is a surface $\W$ endowed with a locally-Euclidean metric $\g$, enclosing two sources of curvature of opposite magnitudes.
This surface cannot be isometrically immersed in the plane, even if one cuts out a region containing the two holes (e.g., cutting out the domain enclosed by the curve $\sigma$ in Figure~\ref{fig:dislocation}) \cite{KMS15,KM25}.
In this geometry, the magnitude of the Burgers vector of any simple curve enclosing both holes is given by \cite{KM15}
\[
\e = 2r_0 \sin(\alpha/2).
\]

A longstanding question in the physics literature has been the behavior of the infimal bending energy as $R\to\infty$ with $\alpha$ and $r_0$ fixed.  Work by Nelson and Peliti \cite{NP87}, followed by a numerical study by Seung and Nelson \cite{SN88},  led to a conjecture whereby the infimal bending energy remains bounded as $R\to\infty$ (the numerical study allowed also for stretching deformations, but led nevertheless to the stated conjecture). This conjecture was proved false in \cite{Kup17}, where a $\log (R/r_0)$ lower bound was proved.

\thmref{thm:main_theorem} is not strong enough to detect this logarithm divergence: 
Indeed, applying it to this geometry yields the estimate
\beq
\|d\n\|_{L^2(\W,\g)}^2 \ge \frac{4\pi - 2\alpha}{2\alpha} \|K\|_{\HMoneMC}^2.
\label{eq:LB_disc_curvature}
\eeq
The norm $\|K\|_{\HMoneMC}$ coincides with the electrostatic energy in a domain of diameter $O(R)$, having its outer boundary grounded, and a charge dipole at its center.
As is well-known in the electrostatic context, the electric potential induced by a charge dipole decays like $1/r^2$, which implies that $\|K\|_{\HMoneMC}$ does not diverge as $R\to \infty$.
Even worse, if one cuts out a region containing the two holes, the total curvature inside the single hole is zero, hence in this case \thmref{thm:main_theorem} fails to detect \emph{any} bending energy.

Nevertheless, in Section~\ref{sec:dislocations} we show that \thmref{thm:burgers} detects the bending energy in this case: 
we show that a naive application of \thmref{thm:burgers} on a foliation of curves yields the bound 
\beq\label{eq:dislocation_naive_lb}
\|d\n\|_{L^2(\W,\g)}^2 \ge C\frac{ \e^2}{r_0^2},
\eeq
which does not diverge with $R/r_0\to \infty$.
However, a more delicate analysis, based on \thmref{thm:burgers}, yet taking into account the energetic super-additivity in this geometry, yields an optimal bound, and retrieves the divergence established in \cite{Kup17} in a considerably more transparent way, which is also better aligned with the rest of this work:

\begin{theorem}
\label{thm:LB_disloc_optimal}
There exists $C>0$ such that for the metric of the curvature dipole, as depicted in Figure~\ref{fig:dislocation}, the bending energy satisfies
\[
\|d\n\|_{L^2(\W,\g)}^2 \ge \frac{C\e^2}{r_0^2} \log\frac{R}{r_0}.
\]
This bound holds even if one cuts out a region or size $O(r_0)$ containing the two holes, and is optimal in its dependence on $R/r_0$.\end{theorem}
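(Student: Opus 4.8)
The plan is to build a logarithmically long family of nested loops, apply \thmref{thm:burgers} to each, and then overcome the obstruction noted after \eqref{eq:dislocation_naive_lb}---namely that a single annular foliation only captures energy $O(\e^2/r_0^2)$---by exploiting the \emph{super-additivity} of the bending energy over disjoint annular regions. First I would set up dyadic scales $r_k = 2^k r_0$ for $k=0,1,\dots,N$ with $N\simeq \log_2(R/r_0)$, and consider the dyadic annuli $A_k = \{r_{k}\le |x| \le r_{k+1}\}$ (after cutting out a ball of radius $O(r_0)$ around the two holes, so that on each $A_k$ the relevant simple closed curves encircle both holes and hence carry Burgers vector of magnitude exactly $\e = 2r_0\sin(\alpha/2)$, by \cite{KM15}; this is the reason the statement survives the cut). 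On each annulus $A_k$ I would foliate by (roughly circular) simple closed loops $\gamma_t$, $t\in(r_k,r_{k+1})$, and apply \eqref{eq:burgers_bound}: $\int_{\gamma_t}|D_s\n|\,d\ell \ge |B_{\gamma_t,\n,\vp_0}|/\Len(\gamma_t(\Circ)) \gtrsim \e/r_k$ for each $t$, since $\Len(\gamma_t)\lesssim r_k$.

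The second step is to convert these length bounds into an $L^2$ bound via Cauchy--Schwarz and the coarea formula, but carefully, \emph{per annulus}. On $A_k$, the coarea formula gives $\int_{A_k}|D_s\n|^2\VolG \gtrsim \frac{1}{r_k}\int_{r_k}^{r_{k+1}}\brk{\int_{\gamma_t}|D_s\n|\,d\ell}^2 dt \gtrsim \frac{1}{r_k}\cdot r_k \cdot \frac{\e^2}{r_k^2} = \frac{\e^2}{r_k^2}$; note $|D_s\n|^2 \le |d\n|_\g^2$, so the left side is bounded by the contribution of $E_B$ on $A_k$. Naively summing over $k$ then gives $\sum_k \e^2/r_k^2 = (\e^2/r_0^2)\sum_k 4^{-k} = O(\e^2/r_0^2)$---the non-divergent bound \eqref{eq:dislocation_naive_lb}---so this is \emph{not} enough, and the key difficulty is exactly here.

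The resolution---and the main obstacle---is to not measure the Burgers vector against the bare length $\Len(\gamma_t)\sim r_k$ on the large annuli, but to use a sharper quantitative version: on annulus $A_k$ one should compare the loop to the nearly-flat configuration it would have if the immersion were the ``outer'' immersion of a dipole of size $O(r_0)$. The point is that a dipole's far field is a \emph{pure gradient} (the $1/r^2$-decaying dipole potential), so the correct quantity to lower-bound on $A_k$ is not $\int_{\gamma_t}|D_s\n|\,d\ell$ but the deviation of $\n$ along $\gamma_t$ from its ``expected'' near-flat winding; equivalently, following \cite{Kup17}, one fixes the total geodesic curvature / the rotation of the frame around $\gamma_t$ to be $2\pi + O(\e^2/r_k^2)$ and tracks, scale by scale, that the Burgers vector $\e$ must be ``rebuilt'' by $d\n$ on \emph{every} scale because parallel transport around $\gamma_t$ is (to leading order) trivial. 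Concretely, I would prove a refined inequality stating that on each $A_k$, $\int_{A_k}|d\n|_\g^2\VolG \ge c\,\e^2/r_0^2$ with a constant independent of $k$: the Burgers vector, being a global topological-type quantity of size $\e$ independent of the scale $r_k$ at which we probe it, forces a \emph{scale-invariant} amount of bending energy in each dyadic shell. Summing the $N \simeq \log_2(R/r_0)$ shells then yields $\|d\n\|_{L^2(\W,\g)}^2 \gtrsim (\e^2/r_0^2)\log(R/r_0)$.

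The delicate part to get right is the refined per-shell estimate: one must argue that rescaling $A_k$ to unit size turns the immersion into an immersion of a fixed unit annulus whose enclosed Burgers vector has magnitude $\e/r_k$ measured in the \emph{rescaled} metric, but whose \emph{unrescaled} Burgers vector is still $\e$; the bending energy $\|d\n\|_{L^2}^2$ is scale-invariant, so applying \thmref{thm:burgers} after rescaling and tracking the correct normalization of $B$ against the correct length scale---and crucially observing that $|B_{\gamma_t,\n,\vp_0}|$ does \emph{not} shrink as $t$ grows, because it is the same dislocation seen from farther away and parallel transport in the (nearly Euclidean) metric is an isometry---gives the uniform lower bound per shell. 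I expect proving this scale-invariance of the Burgers-vector magnitude (i.e., that the far field cannot "dilute" the defect faster than the naive bound suggests, once one correctly accounts for super-additivity rather than integrating a single foliation) to be the crux, and it is where the input from \cite{Kup17} and the energetic super-additivity discussion of Section~\ref{sec:dislocations} enters. Optimality in $R/r_0$ follows from matching explicit constructions (the cut-and-glue immersion interpolating between scales), which I would cite rather than reconstruct here.
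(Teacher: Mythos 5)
Your setup (dyadic shells, the Burgers bound applied to each loop, and the recognition that naive summation over shells only yields the non-divergent bound \eqref{eq:dislocation_naive_lb}) matches the paper's diagnosis of the difficulty. The gap is in the claimed refined per-shell estimate $\int_{A_k}|d\n|_\g^2\,\VolG\ge c\,\e^2/r_0^2$ with $c$ independent of $k$, which is asserted but not proven, and whose proposed justification fails. Rescaling $A_k$ to unit size cannot help: under a dilation by $r_k^{-1}$ the Burgers vector scales like a length, so the ratio $|B_{\gamma_t,\n,\vp_0}|/\Len(\gamma_t(\Circ))$ appearing in \thmref{thm:burgers} is itself scale-invariant and equals $\sim\e/r_k$ both before and after rescaling. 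Consequently, applying \thmref{thm:burgers} shell by shell can never produce more than $\sim\e^2/r_k^2$ per shell --- exactly the summable series you correctly identified as insufficient. Your observation that $|B_{\gamma_t,\n,\vp_0}|=\e$ does not shrink as $t$ grows is true but beside the point: what shrinks is the ratio $|B|/\Len(\gamma_t)$, and that ratio is all that \thmref{thm:burgers} gives you. The appeal to ``super-additivity'' and to the dipole far field being a pure gradient remains a heuristic; no inequality is derived from it.

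The missing ingredient --- the heart of the paper's argument --- is a transport mechanism carrying the strong estimate from the innermost curve outward. The paper first reduces to smooth isometric immersions of the flat metric (by density), which are developable; the asymptotic lines run from the inner boundary to the outer boundary and meet the constant-$r$ circles at angle at least $\pi/6$ for $r>2r_0$ (using the geodesic convexity of the inner boundary). Letting $X$ be the unit field along asymptotic lines, the Codazzi equations give the conservation law $\div(HX)=0$, hence $d\iota_X(|d\n|\,\VolG)=0$, and Stokes' theorem yields $\int_{\sigma_r}|d\n|\ip{X,\nu}\,\iVolG=\int_{\sigma_{2r_0}}|d\n|\ip{X,\nu}\,\iVolG$ for every $r$. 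Combined with the angle bound and the Burgers estimate at $r=2r_0$ only, this gives $\int_{\sigma_r}|d\n|\,\iVolG\gtrsim\e/r_0$ uniformly in $r\in(2r_0,R)$ --- a bound that does not decay with $r$ --- and only then does the coarea/Cauchy--Schwarz integration produce the factor $\log(R/r_0)$. Without this conservation law (or some quantitative substitute linking the inner and outer shells), your per-shell claim is unsupported and the proof is incomplete.
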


An upper bound with the same scaling can be obtained by connecting two half-disclinations of opposite change by flat ridges \cite{GHKM13}. 
The prefactor $\e/r_0$ is always bounded by a universal constant \cite[Section~3.1]{KM25}, and in most application  is also bounded from below. 

\subsection{Related work}\label{sec:literature}

\paragraph{Comparison with results of Olbermann on cones.}
Inequality \eqref{eq:GB+isoK}  should be compared with the isoperimetric inequality obtained by Olbermann \cite[Lemma~1]{Olb17}:

\begin{quote}
Let $D\subset\R^2$ be the unit disc and let $u\in C^2(\bD)$. Then,
\beq
\brk{\int_{\dD} |D^2 u|\, ds}^2 \ge 4\pi \Abs{\int_D \det D^2 u\, dx}.
\label{eq:olbermann}
\eeq
\end{quote}

Inequality \eqref{eq:olbermann} can be interpreted as a linearization of \eqref{eq:GB+isoK}.
Specifically, consider an immersed surface $f\in C^\infty_{\Imm}(D;\R^3)$ obtained by the graph of a function
\[
f(x,y) = (x,y,\e^{1/2}\, u(x,y)),
\]
where we fix $u\in C^\infty(D)$ and view $\e>0$ as a small parameter.
Denote by $\g_\e$ the $\e$-dependent induced metric and by $K_\e$ the corresponding Gaussian curvature. 
By \eqref{eq:GB+isoK}, for every $\e\ge0$,
\beq
\brk{\int_{\dD} |d\n_\e|_{\g_\e}\, \iVolGeps}^2 \ge 
\brk{4\pi - \Abs{\int_D K_\e\,\VolGeps}} \Abs{\int_D K_\e\,\VolGeps}.
\label{eq:GB+isoKeps}
\eeq
A direct power expansion yields,
\[
K_\e\,\VolGeps = \e \, \det D^2 u\, dx + O(\e^2),
\]
and
\[
 |d\n_\e|_{\g_\e}\,\iVolGeps = \e^{1/2} \, |D^2 u|\, ds + O(\e^{3/2}),
\]
hence \eqref{eq:olbermann} is obtained by differentiating \eqref{eq:GB+isoKeps} with respect to $\e$ at zero. 
 
The tools in the proof of \thmref{thm:GB+iso} and  \cite[Lemma~1]{Olb17} are similar, combining degree theory and an isoperimetric inequality. 
In our nonlinear setting, the analysis is carried out on the unit sphere, which is the codomain of the Gauss map. It requires the use of an isoperimetric inequality for non-simple closed curves on the sphere \cite{Wei74}, identifying the integral of $|d\n|_\g$ with the length of $\n(\dD)\subset\Sph$, and combining the appropriate notions of degree with Gauss's theorema egregium.

Our method of proof of \thmref{thm:main_theorem} hinges on some of the ideas used by Olbermann \cite{Olb17,Olb18}, with the following major differences: 
(i) Olbermann's work focuses on one specific geometry---a cone of positive curvature---whereas our analysis applies to arbitrary surfaces. 
(ii) The foliation of the surface in \cite{Olb17,Olb18}  uses the level sets of a logarithmic function; 
as our work shows, this gives an optimal bound because this function is the ''Riesz representer" of the $\Hminusone$-norm of the curvature of the truncated cone.
This observation enables the identification of the bound with a negative Sobolev norm of the curvature, and consequently the generalization to general geometries, and multiply-connected domains. 
(iii) The bending energy functional used in \cite{Olb17,Olb18}, which is the $L^2$-norm squared of the Laplacian of the configuration, is only a linearization of the true (geometric, nonlinear) bending energy.
The bound obtained by Olbermann for this (linearized) energy is similar to \eqref{eq:cone_lb_curvature} --- namely, has a logarithm divergence, and a linear dependence on $\alpha$ for small $\alpha$.

On the other hand, \cite{Olb17,Olb18} address a wider scope, in which 
one distinguishes between a ``reference" metric $\g$ and the ``actual" metric $f^*\euc$ induced by the immersion.  The reference metric is the one with respect to which norms and the volume form are evaluated. 
The total energy consists of the bending energy supplemented by a penalization---a \Emph{stretching energy}---for the discrepancy between the reference and  actual metrics. 
Extending our nonlinear analysis to allow for metric deformations is left for future work.

\paragraph{Relations to non-Euclidean elasticity.}
In a broader context, this work relates to a fundamental problem in Riemannian geometry: estimating the minimal amount of deformation required to immerse one Riemannian manifold into another. 
This problem has a direct application to \Emph{incompatible elasticity} (also known as \Emph{non-Euclidean elasticity}), where the elastic body is a $d$-dimensional Riemannian manifold with boundary $(M,\G)$, and the ambient space is a $d$-dimensional Riemannian manifold without boundary---Euclidean space $(\R^d,\euc)$ in most applications. 
A configuration of the body is an immersion $f:M\to \R^d$, and the elastic energy associated with a configuration $f$ is a measure of metric discrepancy between the actual metric of the body, $f^*\euc$, and its intrinsic reference metric $\G$ (note that unlike above, here the codimension is zero). 
Heuristically, for orientation preserving maps, this energy is of the type
\[
E_{(M,\G)} (f) = \dashint_M |f^*\euc - \G|^2 \, \VolGen_{\G},
\]
where $\dashint$ is the integral divided by the total volume.
The fundamental problem is to estimate the infimal elastic energy in terms of the geometry of the body, notably its curvature.

This problem as stated is widely open. 
A first attempt to address it is due to Kupferman and Shamai \cite{KS12}, resulting in a rather obscure lower bound applicable only in the case of positive scalar curvature. 
Lewicka and Mahadevan conjectured in \cite{LM22} that the infimal energy scales like the $H^{-2}$ norm of the Riemann curvature tensor, whereas Kupferman and Maor \cite{KM25b} showed that this conjecture holds asymptotically in the limit of vanishing curvature.

Another asymptotic regime, of important physical relevance, is that of thin bodies, where $M=M_t$ is a $t$-tubular neighborhood of a $k$-dimensional submanifold $(S,\g)$ (in applications, we typically have $d=3$ and $k=1,2$).
In this case, it was shown in \cite{MS19} that, for $t\ll 1$,
\[
\inf E_{(M_t,\g)} \gtrsim t^4 \|R^\G\|_{L^2(S)}^2,
\]
where $R^\G$ is the Riemann curvature tensor of $\G$.
This result is optimal in the case of $k=0,1$ (for $k=0$ it was generalized to general ambient space in \cite{KM25}), but not for higher $k$; indeed, it follows from dimension-reduction results \cite{LP11,KS14} that, in the case that $S$ is totally-geodesic in $M$ (so-called \Emph{non-Euclidean plate theory}), we have that
\[
\inf E_{(M_t,\g)} = t^2 \inf E_B + o(t^2),
\]
where $E_B$ is as in \eqref{eq:bending_energy} (or a higher dimensional analogue), restricted to isometric immersions of $(S,\g)$.
In this context, our work provides a new lower bound to the elastic energy of thin plates.

\paragraph{Relations to compensated compactness results.}
As $K$ is, by Gauss' theorema egregium, the Jacobian determinant of the normal $\n$, our result shows that the $\dot{H}^1$ norm of $\n$ controls an $\Hminusone$ norm of its Jacobian determinant.
This is reminiscent of the fundamental result of Coifman et al.~\cite{CLMS93} for functions in $W^{1,n}(\R^n;\R^n)$, in which the $n$-th power of the $\dot{W}^{1,n}$ norm controls the Hardy $\mathcal{H}^1$ norm of the Jacobian determinant;  note that $\mathcal{H}^1$ controls both $L^1$ and $\Hminusone$ for $n=2$ (interestingly, a precursor of this result \cite{Mul90} also uses an isoperimetric inequality to obtain higher regularity of the determinant). 
For sphere-valued maps, M\"uller and \v{S}ver\'ak \cite{MS95} has even more similar results, relating the $\dot{H}^1$ norm of a sphere-valued map to the $\mathcal{H}^1$-norm (Corollary 3.3.2) and the $\Hminusone$-norm (Proposition~3.5.5, Theorem~3.5.6) of its Jacobian determinant;
interestingly,  a constant similar to the $4\pi - \|K\|_{\LoneMC}$ prefactor in \eqref{eq:main_bound} appears there as well.
Despite these striking similarities, we were not able to identify a direct relation between these results and ours.
Note, in particular, that these estimates in \cite{MS95} do not detect the curvature ``in the holes'' which is essential to our analysis, and that their $\dot{H}^1$ and $\Hminusone$ norms are somewhat different from ours, as they consider maps from the whole plane, with its Euclidean volume form, to the sphere, whereas in our case, both the volume form on the domain and the Gauss map to the sphere depend on the underlying immersion.

\paragraph{Structure of paper}
In \secref{sec:Darboux} we prove the auxiliary results concerning the extendability of framed loops, and in particular, at the degree of regularity needed for this work.
In \secref{sec:GB+iso} we prove \thmref{thm:GB+iso}. 
In \secref{sec:main_theorem} we prove \thmref{thm:main_theorem}, first in the smooth case, and then in the two different regimes of lower regularity. 
In \secref{sec:burgers} we prove \thmref{thm:burgers}.
In \secref{sec:examples}, we consider two examples---cones and curvature dipoles---and prove the lower energy bounds Theorem~\ref{thm:cone} and Theorem~\ref{thm:LB_disloc_optimal}. Finally, we prove in the appendix technical results concerning the closure of $\Hs$ functions under products and composition, as well as elliptic estimates pertinent to finitely-connected domains; these are variations of standard results, but we were not able to locate precise references.

\paragraph{Acknowledgments}
We are grateful to Dror Bar Natan, Yoel Groman, Or Hershkovits, Tom Needham, Tal Novik, Jake Solomon and Lior Yanovski for their useful advice. RK was funded by ISF Grant 560/22,  CM was funded by ISF grant 2304/24 and BSF grant 2022076, and DPG was funded by the Zuckerman STEM Leadership Program. 
This work was written while CM was visiting the University of Toronto and the Fields Institute; CM is grateful for their hospitality.

\section{$\Hhalf$-framed loops: extendability and Burgers vectors}
\label{sec:Darboux}

We start by proving \propref{prop:extendabe_eq_nontrivial}, i.e., that extendability of a framed loop is equivalent to non-triviality of the corresponding moving trihedron in the smooth case:

\Emph{Proof of \propref{prop:extendabe_eq_nontrivial}}:
The immersion $f_0\in C^\infty_{\Imm}(\bD;\R^3)$ given in polar coordinates by
\[
f_0(r,\vp) = r(\cos\vp,\sin\vp,0) 
\] 
restricts on the boundary to $(\gamma_0,\n_0)$. 
In the first direction, let $(\gamma_1,\n_1)\sim(\gamma_0,\n_0)$. By \cite[comment after Theorem~1.1]{Hir59}, to every smooth isotopy $t\mapsto (\gamma_t,\n_t)$ connecting $(\gamma_0,\n_0)$ and $(\gamma_1,\n_1)$ there exists a corresponding variation of smooth immersions $t\mapsto f_t$ assuming these boundary data (immersions $D\to\R^3$ are said to be flexible). 
That is, the framed loop $(\gamma_1,\n_1)$ is extendable.

The reverse claim hinges on  $C^\infty_{\Imm}(\bD;\R^3)$ being connected: 
indeed, suppose that $(\gamma,\n)$ is extendable and let $f\in C^\infty_{\Imm}(\bD;\R^3)$ be a corresponding extension. 
First, using affine transformations, modify $f$ smoothly such that $f(0) = 0$ and $df_0=(\id,0)$. 
Then, consider the homotopy of immersions,
\[
H_t(r,\vp) = \frac{f(tr,\vp)}{t}
\]
starting at $t=1$. 
As $t\to 0$,  $H_t$ tends to $f_0$, hence every $f$ is isotopic to $f_0$, and consequently,  the boundary data are isotopic as well.
\hfill\ding{110}

We now turn our attention to framed loops of low-regularity:

\begin{lemma}
\label{lem:H12frames_well_defined}
Let $(\gamma,\n)$ be an $\Hhalf$-framed loop.
Then the moving trihedron $(\frakt,\frakn,\n)$ defined by \eqref{eq:Darboux} is in $\Hhalf(\Circ;\SO(3))\cap L^\infty(\Circ;\SO(3))$. 
Moreover, the map $(\gamma,\n)\mapsto (\frakt,\frakn,\n)$ is continuous as a map $\Hthreehalves_{\Imm,c,C}(\Circ;\R^3)\times \Hhalf(\Circ;\Sph) \to \Hhalf(\Circ;\SO(3))$, where $\Hthreehalves_{\Imm,c,C}(\Circ;\R^3)$ are immersions $\gamma\in \HthreehalvesImm(\Circ;\R^3)$ such that $|\gamma'|\in (c,C)$ for some fixed $C>c>0$.
\end{lemma}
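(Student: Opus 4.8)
The plan is to carry out every manipulation in the Banach algebra $X:=\Hhalf(\Circ)\cap L^\infty(\Circ)$ of $\R$-valued functions, which (unlike $\Hhalf(\Circ)$ alone) is closed under pointwise products, with $\|fg\|_{\Hhalf}\lesssim \|f\|_{\Hhalf}\|g\|_{L^\infty}+\|f\|_{L^\infty}\|g\|_{\Hhalf}$, and closed under post-composition with functions that are smooth on a neighbourhood of the (compact) essential range of the argument; both facts, as well as the continuity of the associated product and composition operators, are the content of the product and composition lemmas proved in the appendix. I would first record that $\gamma\in\HthreehalvesImm(\Circ;\R^3)$ forces $\gamma'\in\Hhalf(\Circ;\R^3)$, while the immersion condition — which for a curve simply reads $c\le|\gamma'|\le C$ a.e.\ for some $0<c<C$ — gives in addition $\gamma'\in L^\infty$; hence each component of $\gamma'$ lies in $X$, and $|\gamma'|$ takes values a.e.\ in the compact set $[c,C]\subset(0,\infty)$.

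From there the membership assertion is a short chain of algebra operations. Post-composing $\gamma'$ with $v\mapsto|v|$ (smooth on a neighbourhood of $\{c\le|v|\le C\}$) gives $|\gamma'|\in X$, and post-composing with $t\mapsto 1/t$ (smooth on a neighbourhood of $[c,C]$) gives $1/|\gamma'|\in X$; multiplying, $\frakt=\gamma'/|\gamma'|$ has each component in $X$, with $|\frakt|=1$ a.e. Since $\n\in\Hhalf(\Circ;\Sph)$ has each component in $X$, the cross product — a bounded bilinear map on triples of $X$-functions, applied componentwise — yields that $\frakn=\n\times\frakt$ has each component in $X$. It remains to check the pointwise-a.e.\ algebraic relations: $\frakt\cdot\frakt=1$; $\n\cdot\frakt=\n\cdot\gamma'/|\gamma'|=0$ by the a.e.\ orthogonality built into the definition of an $\Hhalf$-framed loop; $\frakn\perp\frakt$, $\frakn\perp\n$ and $|\frakn|=1$ since $\frakn$ is the cross product of two orthonormal vectors; and finally $\frakt\times\frakn=\frakt\times(\n\times\frakt)=\n(\frakt\cdot\frakt)-\frakt(\frakt\cdot\n)=\n$, so the frame is positively oriented. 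Thus $(\frakt,\frakn,\n)$ takes values a.e.\ in $\SO(3)$, and being an element of $\Hhalf(\Circ;\R^9)\cap L^\infty(\Circ;\R^9)$ it belongs to $\Hhalf(\Circ;\SO(3))\cap L^\infty(\Circ;\SO(3))$ by the definition of Sobolev maps into a submanifold.

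For the continuity statement I would observe that $(\gamma,\n)\mapsto(\frakt,\frakn,\n)$ has been written as a composition of maps, each continuous when restricted to $\Hthreehalves_{\Imm,c,C}(\Circ;\R^3)\times\Hhalf(\Circ;\Sph)$: the map $\gamma\mapsto\gamma'$ is bounded linear $\Hthreehalves\to\Hhalf$ and, on $\Hthreehalves_{\Imm,c,C}$, has $L^\infty$-norm uniformly bounded by $C$; the normalizations $\gamma'\mapsto|\gamma'|\mapsto 1/|\gamma'|$ are continuous because on this subset their arguments stay in the \emph{fixed} compact set $[c,C]$, on a neighbourhood of which the relevant functions and all their derivatives are bounded, so the continuity half of the composition lemma applies; and the product and cross-product operators on tuples of $X$-functions are bounded bilinear, hence continuous. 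Composing these continuous maps gives the claimed continuity, the image lying in $\Hhalf(\Circ;\SO(3))$ by the previous paragraph.

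The only genuine difficulty is that $\Hhalf(\Circ)$ is neither a Banach algebra nor contained in $L^\infty$, so none of the above is meaningful on $\Hhalf$ alone; the entire argument must live in $\Hhalf\cap L^\infty$, invoking the refined product and composition estimates. Relatedly — and this is exactly why the continuity statement is phrased over $\Hthreehalves_{\Imm,c,C}$ with $c,C$ fixed rather than over all $\Hhalf$-immersions — the nonlinear normalization $v\mapsto v/|v|$ must be applied to functions whose values lie in a compact set bounded away from the origin \emph{uniformly in} $\gamma$, which fails without the uniform immersion bounds.
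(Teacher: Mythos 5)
Your proposal is correct and follows essentially the same route as the paper: the paper obtains $\frakt$ by composing $\gamma'$ with the single Lipschitz map $v\mapsto v/|v|$ on the annulus $\{c\le|v|\le C\}$ via the appendix composition lemma, and then gets $\frakn=\n\times\frakt$ from the appendix product lemma for bounded $\Hhalf$ functions --- exactly the two tools you invoke, with your only deviations being the (equivalent) factorization of the normalization into $|\cdot|$, inversion, and multiplication, and the explicit verification of the pointwise $\SO(3)$ relations, which the paper leaves implicit.
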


\begin{proof}
Fix $C>c>0$, and denote by $B_r\in \R^3$ the ball or radius $r$ around the origin.
Note that the map $F:B_C\setminus B_c\to \Sph$, $F(v) = \frac{v}{|v|}$ is a Lipschitz map, whose Lipschitz constant depends only on $c$.
Thus, \lemref{lem:Hs_composition1}, concerning the composition of $\Hs$-maps, shows the continuity of the map 
\[
F\circ\deriv{}{\vp}: \Hthreehalves_{\Imm,c,C}(\Circ;\R^3) \to \Hhalf(\Circ;\Sph), \qquad \gamma \mapsto \frakt.
\] 
\lemref{lem:Hs_multiplication}, concerning the pointwise multiplication of $\Hs$-maps, proves the continuity of the map $(\frakt,\n)\mapsto \frakn$, which completes the proof.
\end{proof}

\begin{lemma}\label{lem:H32arclength}
Let $(\gamma,\n)$ be an $\Hhalf$-framed loop.
Then the arclength function $s:\Circ\to\Circ$
\[
s(t) = 2\pi \frac{\int_0^t |\gamma'(\theta)|\,d\theta}{\int_0^{2\pi} |\gamma'(\theta)|\,d\theta}
\] 
is an $\Hthreehalves$-diffeomorphism (i.e., a bijection having an $H^{3/2}$ inverse). 
Moreover, $(\gamma,\n)\circ s^{-1}$ is an $\Hhalf$-framed loop.
\end{lemma}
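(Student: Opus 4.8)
The plan is to show two things: that $s$ is a bijection $\Circ\to\Circ$ of regularity $\Hthreehalves$ with $\Hthreehalves$ inverse, and that precomposing the framed loop with $s^{-1}$ yields again an $\Hhalf$-framed loop. First I would observe that, since $\gamma\in\HthreehalvesImm(\Circ;\R^3)$, the speed $|\gamma'|$ is a positive function on $\Circ$; by \lemref{lem:Hs_composition1} applied to the smooth map $v\mapsto|v|$ on the annulus $B_C\setminus B_c$ (where $c,C$ are the upper and lower speed bounds from the definition of immersion), $|\gamma'|\in\Hhalf(\Circ)$, and moreover $|\gamma'|$ is bounded and bounded away from zero. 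Then $s$, being (up to the normalizing constant $L=\int_0^{2\pi}|\gamma'|$) an antiderivative of $|\gamma'|$, lies in $\Hthreehalves(\Circ;\Circ)$, is strictly increasing, hence a continuous bijection; and $s'=\tfrac{2\pi}{L}|\gamma'|$ is bounded away from zero and infinity. The inverse $s^{-1}$ then satisfies $(s^{-1})'=1/(s'\circ s^{-1})$; since $t\mapsto 1/t$ is smooth on a compact interval bounded away from $0$, another application of \lemref{lem:Hs_composition1} (composition of the $\Hhalf$ function $1/s'$ with the $\Hthreehalves$, Lipschitz map $s^{-1}$ — here I would first need $s^{-1}\in\Hhalf$, which is immediate from $s^{-1}$ being Lipschitz, i.e. $C^{0,1}\subset\Hhalf$) shows $(s^{-1})'\in\Hhalf(\Circ)$, so $s^{-1}\in\Hthreehalves(\Circ)$. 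Thus $s$ is an $\Hthreehalves$-diffeomorphism.

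Next, to see that $(\gamma,\n)\circ s^{-1}$ is an $\Hhalf$-framed loop, I would check the three requirements: $\gamma\circ s^{-1}\in\HthreehalvesImm$, $\n\circ s^{-1}\in\Hhalf(\Circ;\Sph)$, and the a.e.\ orthogonality $(\gamma\circ s^{-1})'\perp(\n\circ s^{-1})$. The membership statements follow from the closure of $\Hs$ under composition with the $\Hthreehalves$-diffeomorphism $s^{-1}$: for $\n\circ s^{-1}$ this is a direct instance of the composition lemma, since $s^{-1}$ is a bi-Lipschitz $\Hthreehalves$ change of variables on $\Circ$ and $\tfrac12<\tfrac32$; for $\gamma\circ s^{-1}$ one applies the same, and the immersion bounds are preserved because $(\gamma\circ s^{-1})'=(\gamma'\circ s^{-1})\cdot(s^{-1})'$ with $(s^{-1})'$ bounded above and below. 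The orthogonality is preserved because reparametrization does not change the direction of $\gamma'$: at a.e.\ point $\varphi$, $(\gamma\circ s^{-1})'(\varphi)$ is a positive multiple of $\gamma'(s^{-1}(\varphi))$, which is a.e.\ orthogonal to $\n(s^{-1}(\varphi))=(\n\circ s^{-1})(\varphi)$.

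The main obstacle I anticipate is the bookkeeping around composition of fractional Sobolev functions on $\Circ$ with a diffeomorphism that is itself only $\Hthreehalves$ (not smooth): one must invoke exactly the right version of the composition lemma (\lemref{lem:Hs_composition1}, proved in the appendix), paying attention to whether the outer function is $\Hhalf$-valued into a manifold versus real-valued, and to the fact that $\Hthreehalves$-diffeomorphisms of a $1$-manifold are automatically $C^{0,1}$ so the change-of-variables Jacobian factor $(s^{-1})'$ is genuinely in $\Hhalf\cap L^\infty$ and bounded below — this last point is what makes the product $(\gamma'\circ s^{-1})\cdot(s^{-1})'$ land back in $\Hhalf$ via \lemref{lem:Hs_multiplication}. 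The rest is routine.
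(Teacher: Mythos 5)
Your proposal is correct and follows essentially the same route as the paper: identify $s$ as bi-Lipschitz, obtain $(s^{-1})'=1/s'(s^{-1}(\cdot))\in\Hhalf$ via the composition lemmas, and then transfer $\gamma$, $\n$ and the a.e.\ orthogonality through the reparametrization. One small correction: the precomposition steps (forming $(1/s')\circ s^{-1}$, $\gamma'\circ s^{-1}$, $\n\circ s^{-1}$) are instances of \lemref{lem:Hs_composition2} (composition $f\mapsto f\circ F$ with $F$ bi-Lipschitz), not of \lemref{lem:Hs_composition1}, which requires the \emph{outer} function to be Lipschitz and so only covers the steps $\gamma'\mapsto|\gamma'|$ and $s'\mapsto 1/s'$.
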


\begin{proof}
Since $\gamma\in \HthreehalvesImm(\Circ;\R^3)$, it follows that $s$ is a bi-Lipschitz map, and therefore also $s^{-1}$.
Since $(s^{-1})'(t) = 1/s'(s^{-1}(t))$, it follows from \lemref{lem:Hs_composition1} (applied to the inversion map, noting that it is Lipschitz away from the origin) and \lemref{lem:Hs_composition2} that $(s^{-1})'\in \Hhalf$, thus $s^{-1}\in \Hthreehalves$.
Similarly, it follows that $\gamma\circ s^{-1} \in \HthreehalvesImm(\Circ;\R^3)$ and $\n\circ s^{-1}\in \Hhalf(\Circ;\Sph)$, with $(\gamma\circ s^{-1})' \perp \n\circ s^{-1}$.
\end{proof}

With these two lemmas, we can now justify Definition~\ref{def:extendableH32}:

\begin{proposition}
\label{prop:connected_framed_loops}
The space of $\Hhalf$-framed loops $(\gamma,\n)$ has two connected components, corresponding to the two connected components of $\Hhalf(S^1;\SO(3))$.
Namely, any two framed loops $(\gamma_i,\n_i)_{i=1,2}$ such that their associated frames $(\frakt_i,\frakn_i,\n_i)_{i=1,2}$ are in the same connected component of $\Hhalf(S^1;\SO(3))$ are isotopic.
Moreover, the isotopy can be chosen such that the framed loops are smooth except possibly at the endpoints.
In particular, smooth framed loops are dense in the space of $\Hhalf$-framed loops, and the smooth approximating sequence can be chosen with uniform bounds from above and below on the speed.
\end{proposition}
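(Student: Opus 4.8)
# Proof Proposal for Proposition~\ref{prop:connected_framed_loops}

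The plan is to establish the connectedness statement by first reducing to a canonical model and then constructing explicit isotopies at the level of the moving trihedron, lifting them back to framed loops. First I would observe that, by Lemma~\ref{lem:H32arclength}, every $\Hhalf$-framed loop is isotopic to one parametrized by arclength (the isotopy being $t\mapsto (\gamma,\n)\circ s_t^{-1}$ where $s_t$ interpolates between $s$ and the identity), so without loss of generality we may assume $|\gamma'|\equiv \ell/(2\pi)$ for a constant $\ell>0$. In this normalization the trihedron $(\frakt,\frakn,\n)$ is obtained from $(\gamma,\n)$ by the continuous map of Lemma~\ref{lem:H12frames_well_defined}, and conversely $\gamma$ is recovered from $\frakt$ by $\gamma(\vp)=\gamma(0)+\frac{\ell}{2\pi}\int_0^\vp \frakt(\theta)\,d\theta$ — a continuous (in fact affine) operation $\Hhalf(\Circ;\Sph)\cap\{\text{closed integral}\}\to\HthreehalvesImm(\Circ;\R^3)$. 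Thus the space of arclength-parametrized $\Hhalf$-framed loops, modulo the choice of basepoint $\gamma(0)\in\R^3$ and total length $\ell>0$ (both of which live in contractible parameter spaces), is homeomorphic to the space of trihedra $Q\in\Hhalf(\Circ;\SO(3))$ subject to the single closure constraint $\int_0^{2\pi} Q(\vp)e_1\,d\vp=0$, where $\frakt=Qe_1$.

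The core step is then to show that this closure constraint does not disconnect either component of $\Hhalf(\Circ;\SO(3))$. Here I would use the known fact (cited in the excerpt via \cite{Nee16}) that in the smooth category the space of framed loops has exactly two components matching those of $C^\infty(\Circ;\SO(3))$; the job is to upgrade this to $\Hhalf$ and to produce isotopies that are smooth except at the endpoints. Given two $\Hhalf$-framed loops with trihedra in the same component of $\Hhalf(\Circ;\SO(3))$, I would first mollify each (smoothing $\gamma$ to a nearby smooth immersion and projecting $\n$ back onto the orthogonal complement of $\frakt$, then renormalizing) to obtain smooth framed loops $(\gamma_i^{sm},\n_i^{sm})$ that are $\Hhalf$-close to the originals; the density of smooth maps in $\Hhalf$ together with the continuity statements in Lemmas~\ref{lem:H12frames_well_defined}–\ref{lem:H32arclength} ensures the mollified trihedra lie in the same $\Hhalf$-component, and small $\Hhalf$-perturbations are connected by a straight-line-type isotopy (in $\SO(3)$-valued maps, via the exponential map, which is well-defined and continuous on $\Hhalf$ by the composition lemmas since $\exp$ is smooth). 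This handles the "endpoints": the isotopy from $(\gamma_i,\n_i)$ to $(\gamma_i^{sm},\n_i^{sm})$ is $\Hhalf$-continuous but only the endpoint $(\gamma_i,\n_i)$ need be non-smooth. Between the two smooth representatives we invoke the smooth connectedness result to get a smooth isotopy, which is automatically an $\Hhalf$-isotopy.

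The main obstacle I anticipate is the closure constraint: the smooth isotopy provided by \cite{Nee16} connects the trihedra, but composing it with $\frakt=Qe_1$ need not keep $\int \frakt\,d\vp=0$ along the path, so the intermediate objects are genuine loops in $\R^3$ only after one corrects $\gamma$ — but $\gamma(\vp)=\gamma(0)+\frac{\ell}{2\pi}\int_0^\vp\frakt$ may fail to close up. The fix is to not demand closure of the intermediate $\gamma$: reinspecting the definition of isotopy of framed loops, what is required is a homotopy through \emph{framed loops}, i.e. through genuinely closed curves with orthogonal normals; so one must argue that $\int_0^{2\pi}\frakt_t\,d\vp$ can be kept zero throughout. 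I would address this by a standard "loop-closing" device: rescale and reparametrize, or append a short correcting arc — concretely, replace $\frakt_t$ by $R_t\frakt_t$ where $R_t\in\SO(3)$ is chosen continuously so that $\int R_t\frakt_t = 0$ fails only on a codimension-$3$ set which, generically along a $1$-parameter family, is avoided after a small perturbation of the path (a transversality argument), or alternatively use that the evaluation map "total integral of $\frakt$" is a submersion off the constant loops so its zero set is a manifold and path-connectedness of the zero set follows from that of the ambient component. Packaging this transversality argument cleanly at $\Hhalf$-regularity — where one cannot freely differentiate — is the delicate point; I would sidestep it by performing the transversality in the smooth category (where the smooth isotopy already lives) and only then transferring to $\Hhalf$. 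The density and uniform-speed-bound conclusions are then immediate: smooth framed loops are dense because every $\Hhalf$-framed loop is connected to a smooth one by the mollification isotopy whose first endpoint is the given loop, and the mollification can be taken so that $|\gamma_\delta'|\to|\gamma'|$ uniformly, keeping the speed within fixed bounds $(c,C)$ along the entire approximating family.
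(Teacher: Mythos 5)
Your overall architecture---reduce to constant speed, mollify each endpoint to a nearby smooth framed loop, and connect the smooth representatives by the smooth connectedness result---is viable and genuinely different from the paper's route. The paper does not reduce the endpoints to smooth loops: it first connects the two trihedra by a path in $\Hhalf(\Circ;\SO(3))$ that is smooth in the interior (via van Schaftingen's result on connected components of $\Hhalf(\Circ;\SO(3))$), and then asserts that Needham's Smale--Hirsch argument, which converts an isotopy of trihedra into an isotopy of constant-speed framed loops, goes through verbatim at $\Hhalf$ regularity; the constant-speed reduction is done last, by mollifying the arclength reparametrizations. Your version buys a cheaper invocation of \cite{Nee16} (only in the smooth category) at the cost of a careful mollification of the endpoints; note that for $\n$ merely in $\Hhalf$ (not $W^{1,1}$) the uniform lower bound on $|\rho_\delta * \n|$ and the uniform smallness of $\ip{\n_\delta,\frakt_\delta}$ must come from the VMO property of $\Hhalf$ maps rather than from a modulus of continuity, and you should also record why two smooth trihedra lying in the same component of $\Hhalf(\Circ;\SO(3))$ lie in the same component of $C^\infty(\Circ;\SO(3))$.

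The genuine gap is your handling of the closure constraint. The constraint $\int_0^{2\pi}\frakt_t\,d\vp=0$ is precisely what the Smale--Hirsch h-principle in \cite[Proposition~3.2.4]{Nee16} resolves---it is the whole content of passing from ``formal'' framed loops (trihedra) to holonomic ones---and it cannot be dispatched by the elementary devices you propose. Rotating $\frakt_t$ by $R_t\in\SO(3)$ gives $\int R_t\frakt_t\,d\vp = R_t\int\frakt_t\,d\vp$, which vanishes if and only if $\int\frakt_t\,d\vp$ already did, so that correction does nothing. And even granting that the closure condition cuts out a codimension-$3$ submanifold where the evaluation map is a submersion, path-connectedness of a regular level set does \emph{not} follow from path-connectedness of the ambient component (a submersion onto $\R$ can have a disconnected regular fiber in a connected space). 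The clean resolution is to cite \cite[Corollary~3.2.5]{Nee16} as a statement about \emph{framed loops}, not about trihedra: it already produces a smooth isotopy through genuinely closed framed curves, so the closure problem never arises in your scheme. As written, your proposal hedges between using that result and re-deriving it, and the self-contained version of the argument does not close.
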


\begin{proof}
The fact that $\Hhalf(S^1;\SO(3))$ has two connected components follows from \cite[Prop.~2.6]{vSc19}, which also implies that if $(\frakt_i,\frakn_i,\n_i)_{i=1,2}$ are in the same connected component then they can be connected by a path $(\frakt_\tau,\frakn_\tau,\n_\tau)_{\tau\in[1,2]}$ such that $(\frakt_\tau,\frakn_\tau,\n_\tau)$ is smooth for every $\tau\in (1,2)$.

The argument of \cite[Proposition~3.2.4]{Nee16} (which is based on the Smale-Hirsch theorem for the inclusion of framed loops into the space of ``formal", i.e., non-holonomic, framed loops) shows that if $(\gamma_i,\n_i)_{i=1,2}$ are framed loops of the same constant speed, then any isotopy $(\frakt_\tau,\frakn_\tau,\n_\tau)_{\tau\in[1,2]}$ of their corresponding trihedra can be adjusted such that $(\frakt_\tau,\frakn_\tau,\n_\tau)_{\tau\in[1,2]}$ are induced by constant speed framed loops $(\gamma_\tau,\n_\tau)_{\tau\in [1,2]}$, forming an isotopy between the endpoints.
This argument, while stated for smooth framed loops, works also in the regularity of the current proposition, i.e., for  $\gamma_i \in \HthreehalvesImm(\Circ;\R^3)$ and  $\n_i\in \Hhalf(\Circ;\Sph)$, forming a continuous isotopy in this space, with smooth framed loops for $\tau\in (1,2)$.
This can be easily adjusted for the case of endpoints having constant speed but not the same length (say, $(\ell_i)_{i=1,2}$), by first constructing an isotopy between $(\gamma_1,\n_1)$ and $((\ell_1/\ell_2)\gamma_2,\n_2)$, and then multiplying the resulting loops by a smooth function that interpolates between $1$ and $\ell_2/\ell_1$.

It remains to remove the "constant speed" assumption.
Let $s_i$ be the arc-length parametrizations of $\gamma_i$.
Thus, $(\tgamma_i,\tilde{\n}_i)_{i=1,2} := (\gamma_i\circ s_i^{-1},\n_i\circ s_i^{-1})_{i=1,2}$ are a pair of framed loops with constant speed, which by \lemref{lem:H32arclength} are of the same regularity as $(\gamma_i,\n_i)_{i=1,2}$.
Thus, there exists an isotopy  $(\tgamma_\tau,\tilde{\n}_\tau)_{\tau\in [1,2]}$ connecting them, where for $\tau\in (1,2)$ the framed loops are smooth.

Now, let $(s_{i,\e})_{i=1,2,\e\in (0,\e_0)}$ be an $\e$-mollification of $s_i$. 
Since mollification only improves the bi-Lipschitz constant (as it increases the minimum and decreases the maximum of $s_i'$), $(s_{i,\e})_{i=1,2,\e\in (0,\e_0)}$ are uniformly bi-Lipschitz.
Since the space of smooth, orientation preserving diffeomorphisms of $\Circ$ is connected (given such diffeomorphism $s$, consider the isotopy $\vp_\tau(t) = \tau t + (1-\tau) s(t)$ to the identity map), we can obtain a smooth path of uniformly bi-Lipschitz diffeomorphisms $(s_\tau)_{\tau\in (1,2)}$, which converge in $\Hthreehalves$ to $s_i$ when $\tau\to i$.
It follows from \lemref{lem:Hs_composition2} that the composition $(\tgamma_\tau\circ s_\tau,\tilde{\n}_\tau\circ s_\tau)_{\tau\in [1,2]}$ is a continuous isotopy connecting $(\gamma_i,\n_i)_{i=1,2}$, which completes the proof.
\end{proof}

\begin{remark}
It might be that the extendability can be defined equivalently not through the connected components of the trihedron $(\frakt,\frakn,\n)$, but directly as in Definition~\ref{def:extandable_frame_smooth}, i.e., that a framed loop $(\gamma,\n)$ with $\gamma \in \HthreehalvesImm(\Circ;\R^3)$ and  $\n\in \Hhalf(\Circ;\Sph)$ is extendable if and only if there exists $f\in \HtwoImm(D;\R^3)$ such that the restriction of the immersion and its normal to the boundary equals $(\gamma,\n)$. As this possible characterization was not needed for our purposes, we did not pursue this direction.
\end{remark}

We will also need the following refinement of the density claim, in which $\n$ has higher regularity:

\begin{lemma}
\label{lem:H32_W11_frames}
Let $(\gamma,\n)\in \HthreehalvesImm(\Circ;\R^3)\times W^{1,1}(\Circ;\Sph)$ be a framed loop. Then, there exists a sequence $(\gamma_n,\n_n)$ of smooth framed loop converging to $(\gamma,\n)$ in that topology.
\end{lemma}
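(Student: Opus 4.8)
The plan is to produce the approximating sequence by mollification and then to restore, by an orthogonal projection, the two pointwise constraints $|\n_n|=1$ and $\n_n\perp\gamma_n'$. It is convenient to first reduce to the case in which $\gamma$ is parametrized proportionally to arclength: composing $(\gamma,\n)$ with the arclength reparametrization $s$ of \lemref{lem:H32arclength}---which is bi-Lipschitz and $\Hthreehalves$---preserves the $\Hthreehalves$-regularity of $\gamma$ and, since $(\n\circ s^{-1})' = (\n'\circ s^{-1})\,(s^{-1})'$ with $(s^{-1})'\in L^\infty$, also preserves the $W^{1,1}$-regularity of $\n$; moreover, convergence of the smooth approximants can be transported back by post-composing with $s$, using the composition lemmas of the appendix. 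After this reduction $|\gamma'|$ is constant, $\frakt:=\gamma'/|\gamma'|\in\Hhalf(\Circ;\Sph)$, and the orthogonality reads $\langle\n,\gamma'\rangle=0$ a.e.

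Fix a standard mollifier $(\rho_\e)$ on $\Circ$ and set $\gamma_n:=\rho_{\sigma_n}\ast\gamma$ and $\tilde\n_n:=\rho_{\delta_n}\ast\n$, with $0<\delta_n\le\sigma_n\to0$ to be fixed later by a diagonal choice. These are smooth; $\gamma_n\to\gamma$ in $\Hthreehalves$; since $\Hhalf(\Circ)\hookrightarrow\VMO(\Circ)$ the mollified tangent has magnitude bounded away from zero, so the $\gamma_n$ are immersions with uniform bounds; and $\tilde\n_n\to\n$ in $W^{1,1}$ and---crucially, since $W^{1,1}(\Circ)\hookrightarrow C^0(\Circ)$ in one dimension---also uniformly. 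Put
\[
\hat\n_n:=\tilde\n_n-\frac{\langle\tilde\n_n,\gamma_n'\rangle}{|\gamma_n'|^{2}}\,\gamma_n',
\qquad
\n_n:=\frac{\hat\n_n}{|\hat\n_n|},
\]
so that $\n_n$ is smooth, $\n_n\perp\gamma_n'$ and $|\n_n|=1$. That $\n_n$ is well defined follows from the structure: for a.e.\ $\vp_0$, using $\langle\n,\gamma'\rangle=0$ a.e.,
\[
\langle\n(\vp_0),\gamma_n'(\vp_0)\rangle=\int\rho_{\sigma_n}(\vp_0-\psi)\,\langle\n(\vp_0)-\n(\psi),\gamma'(\psi)\rangle\,d\psi,
\]
which is bounded by $\omega_\n(\sigma_n)\Norm{\gamma'}_{L^\infty}$, with $\omega_\n$ the (uniform) modulus of continuity of $\n$; combined with $\Norm{\tilde\n_n-\n}_{L^\infty}\to0$ this gives $\Norm{\langle\tilde\n_n,\gamma_n'\rangle}_{L^\infty}\to0$, hence $|\hat\n_n|^{2}=|\tilde\n_n|^{2}-|\gamma_n'|^{-2}\langle\tilde\n_n,\gamma_n'\rangle^{2}\to1$ uniformly. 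Thus $(\gamma_n,\n_n)$ is a genuine smooth framed loop for large $n$, with $\gamma_n\to\gamma$ in $\Hthreehalves$.

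It remains to show $\n_n\to\n$ in $W^{1,1}$, and this is where the main difficulty lies. Writing $\hat\n_n=(I-\pi_n)\tilde\n_n$ with $\pi_n=|\gamma_n'|^{-2}\gamma_n'\otimes\gamma_n'$, one computes
\[
\hat\n_n'=\tilde\n_n'-\big(\langle\tilde\n_n,\gamma_n'\rangle|\gamma_n'|^{-2}\big)'\gamma_n'-\langle\tilde\n_n,\gamma_n'\rangle|\gamma_n'|^{-2}\gamma_n''.
\]
Since $\tilde\n_n\to\n$ in $W^{1,1}$, the claim reduces to the $L^1$-smallness of the last two terms (and of $\langle\tilde\n_n,\gamma_n'\rangle|\gamma_n'|^{-2}\gamma_n'$ itself); continuity of the division by $|\hat\n_n|$ is then routine given the uniform lower bound already obtained. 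The terms not containing $\gamma_n''$ are controlled by $\Norm{\langle\tilde\n_n,\gamma_n'\rangle}_{L^\infty}\to0$ and $\tilde\n_n'\to\n'$ in $L^1$. The genuinely delicate terms are those involving $\gamma_n''$, whose $L^1$-norm is only $O(\sigma_n^{-1/2})$ and which does not converge; these must be handled not factor by factor but through the cancellation forced by $\langle\n,\gamma'\rangle\equiv0$. Exhibiting the scalar coefficient as a bilinear commutator, $\langle\tilde\n_n,\gamma_n'\rangle=\langle\rho_{\delta_n}\ast\n,\rho_{\sigma_n}\ast\gamma'\rangle-\rho_{\sigma_n}\ast\langle\n,\gamma'\rangle$ (the subtracted term being zero), and differentiating, the dangerous contributions reduce to Friedrichs/DiPerna--Lions--type commutators pairing the $W^{1,1}$-field $\n$ against the distributional derivative $\gamma''$ of the $\Hhalf$-field $\gamma'$---noting that $\langle\n,\gamma''\rangle=-\langle\n',\gamma'\rangle$ is an honest $L^1$ function. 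Each such commutator tends to $0$ in $L^1$; the one subtlety is that $\n\in W^{1,1}$ carries no quantitative modulus of continuity, so the two mollification scales must be decoupled and $\sigma_n,\delta_n$ chosen by a diagonal argument (e.g.\ $\delta_n=1/n$, $\sigma_n=\omega_\n(1/n)^{1/2}$) ensuring $\omega_\n(\delta_n)\,\sigma_n^{-1/2}\to0$. Carrying out this commutator analysis, which reconciles the $\Hhalf$-regularity of $\gamma'$ with the $W^{1,1}$-regularity of $\n$, is the crux of the proof; transporting the resulting estimate back to the original parametrization and composing with $x\mapsto x/|x|$ are then standard in view of the appendix lemmas.
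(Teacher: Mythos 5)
Your overall scaffolding (reduction to constant speed, mollification, uniform smallness of $\ip{\tilde\n_n,\gamma_n'}$ via the modulus of continuity of $\n\in W^{1,1}\subset C^0$, and the VMO argument for the immersion property) matches the paper's. But at the decisive step you diverge: you restore orthogonality by projecting $\n$ onto $(\gamma_n')^\perp$, whereas the paper projects the \emph{velocity} $\gamma_n'$ onto $\n_n^\perp$ (and then repairs the resulting failure of the curve to close up by a finite-dimensional correction of the form $\sum_i\psi^i\,(c^i_n\times\n_n)$). The paper's choice is made precisely to avoid the difficulty you run into: with their projection, $\n_n=\rho_{1/n}*\n/|\rho_{1/n}*\n|$ converges in $W^{1,1}$ trivially, and no second derivative of $\gamma$ ever has to be estimated; the only price is the closing-up correction, which is soft. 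Your choice forces you to differentiate the projection coefficient against $\gamma_n''$, which is exactly the term you correctly identify as the crux --- and then do not prove.

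Concretely, the gap is the term $\ip{\tilde\n_n,\gamma_n'}\,|\gamma_n'|^{-2}\,\gamma_n''$. This is a genuine product, not a commutator: the factor-by-factor bound gives $\bigl(\omega_\n(\sigma_n)+\omega_\n(\delta_n)\bigr)\cdot O(\sigma_n^{-1/2})$, and your diagonal choice of scales kills only the $\omega_\n(\delta_n)\sigma_n^{-1/2}$ piece. The piece $\omega_\n(\sigma_n)\sigma_n^{-1/2}$ involves a \emph{single} scale (both the smallness of $\ip{\n,\gamma_n'}$ and the size of $\gamma_n''$ come from mollifying $\gamma$ at scale $\sigma_n$), so no decoupling of $\sigma_n$ and $\delta_n$ can help, and for a general $W^{1,1}$ normal ($\omega_\n(t)\sim 1/\log(1/t)$, say) this product diverges. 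Making it converge requires an integrated bilinear cancellation that uses the $W^{1,1}$-regularity of $\n$ and the $\Hhalf$-regularity of $\gamma'$ \emph{simultaneously} (e.g.\ pairing the local mass $\int_{B_{\sigma_n}(x)}|\n'|$ against the quantity $\int|\rho_{\sigma_n}'(x-z)||\gamma'(z)-\gamma_n'(x)|^2\,dz$, whose $L^1_x$-norm tends to zero by the Gagliardo seminorm); the standard Friedrichs/DiPerna--Lions lemma, which you invoke, covers the commutator $\ip{\n,\gamma_n''}+\ip{\n',\gamma'}$ appearing in $\ip{\tilde\n_n,\gamma_n'}'$, but not this product term. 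I believe your route can be completed along these lines, but as written the hardest estimate is asserted rather than proved, and the assertion that it follows from standard commutator lemmas is not correct for the worst term. Either carry out that estimate in full, or switch to the paper's projection of $\gamma_n'$ onto $\n_n^\perp$, which sidesteps it entirely.
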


\begin{proof}
By \lemref{lem:H32arclength}, we may assume that $|\gamma'|$ has constant speed, noting that the reparametrized $\n$ retains the $W^{1,1}$-regularity, as a composition of a $W^{1,1}$ map and an $\Hthreehalves$ map on a one-dimensional manifold. 

Let $\rho_\e\in C^\infty(\Circ)$ be a family of mollifiers. For $n\in\bbN$ set
\[
\gamma_n = \rho_{1/n}* \gamma
\Textand
\n_n = \frac{\rho_{1/n}* \n}{|\rho_{1/n}* \n|}.
\]
Since $\rho_{1/n}* \n \to \n\in W^{1,1}(\Circ;\Sph)$, it converges also uniformly (by the Poincare-Sobolev inequality), hence
\[
|\rho_{1/n}* \n| \to 1
\qquad
\text{uniformly}.
\] 
Likewise, $\gamma_n\to\gamma$ in $\Hthreehalves$; 
by the property of mollifiers,
\[
\gamma_n' = \rho_{1/n}*\gamma' \to \gamma' 
\qquad 
\text{in $\Hhalf(\Circ;\R^3)$}.
\]
We next show that $\gamma_n$ is an immersion for $n$ large enough. 
Fix $\vp\in\Circ$. Then,
\[
\begin{aligned}
\inf_{s\in\Circ} |\gamma_n'(\vp) - \gamma'(s)| & = \inf_{s\in\Circ} \Abs{\int_{\Circ} \rho_{1/n}(\vp - z) (\gamma'(z) - \gamma'(s))\, dz} \\
&\le \inf_{s\in\Circ} \int_{\Circ} \rho_{1/n}(\vp - z) |\gamma'(z) - \gamma'(s)|\, dz \\
&\le C \dashint_{B_{1/n}(\vp)} \dashint_{B_{1/n}(\vp)}  |\gamma'(z) - \gamma'(s)|\, dz ds,
\end{aligned}
\]
for some fixed constant $C$. The right-hand side tends to zero as $n\to\infty$ by the finiteness of the Gagliardo seminorm of $\gamma'$ and Lebesgue dominated convergence (functions in $\Hhalf(\Circ)$ are in $\VMO(\Circ)$; see \cite[Lemma~2.4]{vSc19}). Thus, $\gamma_n'$ converges uniformly to the set
\[
\gamma'(\Circ) \subset  \frac{\Len(\gamma)}{2\pi} \Sph,
\]
and therefore $\gamma_n$ is an immersion for $n$ large enough (note that $\gamma_n'$ converges uniformly to a set---not to $\gamma'$). 

Consider next the inner-product
$\ip{\gamma_n',\n_n}$
which by \lemref{lem:Hs_multiplication} satisfies
\[
\ip{\gamma_n',\n_n} \to \ip{\gamma',\n} = 0 \qquad \text{in $\Hhalf(\Circ)$}.
\] 
For every $\vp\in\Circ$,
\[
\begin{aligned}
\ip{\gamma_n',\n_n}(\vp) 
&= \frac{1}{|\rho_{1/n}*\n|(\vp)} \int_{\Circ}\int_{\Circ}
\rho_{1/n}(t)  \rho_{1/n}(s) \ip{\gamma'(\vp - t),\n(\vp-s)}\, ds dt \\ 
&= \frac{1}{|\rho_{1/n}*\n|(\vp)} \int_{\Circ}\int_{\Circ}
\rho_{1/n}(t)  \rho_{1/n}(s) \ip{\gamma'(\vp - t),\n(\vp-s) - \n(\vp - t)}\, ds dt.
\end{aligned}
\]
Taking absolute values, using the fact that $|\gamma'|$ is constant and  that the support of $\rho_{1/n}$ has length less than $2/n$, we obtain
\[
|\ip{\gamma_n',\n_n}(\vp)| \le 
\frac{\Len(\gamma)}{2\pi}  \frac{\omega_{\n}(4/n)}{|\rho_{1/n}*\n|(\vp)},
\]
where $\omega_\n$ is the modulus of continuity of $\n$. Letting $n\to\infty$, we obtain that
\[
\ip{\gamma_n',\n_n} \to 0 \qquad\text{uniformly}.
\]

The next step is to account for the fact that $\gamma_n'$ and $\n_n$ are orthogonal only asymptotically, hence $(\gamma_n,\n_n)$ is not a framed loop. 
We project the velocities $\gamma_n'$ onto the plane perpendicular to $\n_n$,
\[
\alpha_n = \gamma_n' - \ip{\gamma_n',\n_n} \n_n,
\]
obtaining $C^\infty(\Circ;\R^3)$ maps, converging to $\gamma'$ in $\Hhalf$ (once again by \lemref{lem:Hs_multiplication}),  bounded away from zero for $n$ large enough, and satisfying
\[
\alpha_n \perp \n_n.
\]
Consequently, 
\[
\sigma_n(\vp) = \int_0^{\vp} \alpha_n(s)\, ds
\]
is a smooth immersion $\Circ\setminus\{0\}\to\R^3$ satisfying $\sigma_n'\perp \n_n$, 
and converging to $\gamma$ in $\Hthreehalves(\Circ\setminus\{0\};\R^3)$. We would be done if $\sigma_n$ could be extended to a closed loop, however,
\[
\Delta_n = \sigma_n(2\pi) - \sigma_n(0) = \int_{\Circ} \brk{ \gamma_n' - \ip{\gamma_n',\n_n} \n_n}\, ds = 
- \int_{\Circ} \ip{\gamma_n',\n_n} \n_n\, ds
\]
is generally nonzero. Yet, 
\[
|\Delta_n| \le \int_{\Circ} |\ip{\gamma_n',\n_n}|\, ds \to 0
\]
by the uniform convergence of the integrand. 

Thus, we need to further approximated $\sigma_n$ by a map $\tsigma_n\in C^\infty_{\Imm}(\Circ;\R^3)$ that retains the orthogonality condition. We do it by 
modifying its velocity  $\alpha_n$ 
into a map $\talpha_n\in C^\infty(\Circ;\R^3)$, satisfying
\[
\talpha_n\perp\n_n
\qquad
\int_{\Circ} \talpha_n(s)\, ds = 0
\Textand
\talpha_n - \alpha_n\to 0 \quad\text{uniformly and in $\Hhalf$}.
\]

Suppose that the image of $\n$ spans $\R^3$. Then there exist smooth functions $\psi^1,\psi^2,\psi^3\in C^\infty(\Circ)$ such that
\[
v^i = \int_{\Circ} \psi^i(s) \n(s)\, ds \qquad i=1,2,3 
\]
are linearly-independent. For $i=1,2,3$,
\[
v^i_n = \int_{\Circ} \psi^i(s) \n_n(s)\, ds
\]
converge to $v^i$, and are linearly-independent for $n$ large enough. 
Let $c^1_n,c^2_n, c^3_n\in\R^3$ be the minimum-norm solutions to 
\[
\sum_{i=1}^3 c^i_n \times v^i_n = \Delta_n,
\]
which converge to zero. 
then
\[
\talpha_n = \alpha_n - \sum_{i=1}^3 \psi^i\, (c^i_n\times \n_n).
\]
satisfies the required conditions. 
If the span of $\n$ is two-dimensional, we adapt this construction noting that the span of the image of $\n_n$ is contained in the span of the image of $\n$.
If the span of $\n$ is one-dimensional, then it is constant, and thus $\n_n=\n$ and $\Delta_n=0$, so there is no need to correct $\sigma_n$.
\end{proof}

Finally, we will need the following:

\begin{lemma}[Stability of extendability under homotopy]
\label{lem:extendability_homotopy}
Let $\W\subset \R^2$ be a domain and let $f\in \HtwoImm(\W;\R^3)$ such that its normal $\n$ is in $\Hone(\W;\R^3)$.
If $\Gamma_1,\Gamma_2\subset \bW$ are two simple closed $C^{1,1}$ curves which are homotopic in $\W$, then $(f,\n)|_{\Gamma_1}$ is extendable if and only if $(f,\n)|_{\Gamma_2}$ is.
\end{lemma}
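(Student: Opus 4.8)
The plan is to reduce the statement to the homotopy invariance already available for smooth framed loops via Proposition~\ref{prop:connected_framed_loops}, i.e.\ to the fact that the connected component of a framed loop (equivalently, the homotopy class of its associated trihedron in $\Hhalf(\Circ;\SO(3))$) is what governs extendability. First I would fix a homotopy $\Phi:\Circ\times[1,2]\to\bW$ with $\Phi(\cdot,1)$ parametrizing $\Gamma_1$ and $\Phi(\cdot,2)$ parametrizing $\Gamma_2$, arranged so that each intermediate curve $\Gamma_\tau=\Phi(\cdot,\tau)$ is a simple closed $C^{1,1}$ curve in $\bW$ (a standard smoothing/transversality adjustment of the given homotopy; one may even take the intermediate curves smooth, which only helps). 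The point of requiring $C^{1,1}$ regularity, as the excerpt already notes, is precisely that the trace operator then sends $H^2(\W)$ and $H^1(\W)$ to $H^{3/2}(\Gamma_\tau)$ and $H^{1/2}(\Gamma_\tau)$ respectively.

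The core step is to show that the map $\tau\mapsto \big(f|_{\Gamma_\tau},\,\n|_{\Gamma_\tau}\big)$ is a continuous path in the space of $\Hhalf$-framed loops (after a harmless reparametrization to normalize domains to $\Circ$). For this I would pull everything back by $\Phi$: the composed maps $f\circ\Phi(\cdot,\tau)\in \Hthreehalves(\Circ;\R^3)$ and $\n\circ\Phi(\cdot,\tau)\in\Hhalf(\Circ;\Sph)$ depend continuously on $\tau$. Continuity of composition at these fractional regularities is exactly what Lemma~\ref{lem:Hs_composition1} and Lemma~\ref{lem:Hs_composition2} (and the multiplication lemma \ref{lem:Hs_multiplication}, for the tangential derivative) deliver — here the outer maps $f\in H^2_{\Imm}$, $\n\in H^1$ are fixed and the inner $C^{1,1}$ diffeomorphisms $\Phi(\cdot,\tau)$ vary continuously in the $C^{1,1}$ (hence $H^{3/2}$) topology, with uniformly controlled bi-Lipschitz constants along the compact parameter interval. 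That $f\circ\Phi(\cdot,\tau)$ remains an immersion with speed bounded above and below uniformly in $\tau$ follows from $f\in\HtwoImm$, the uniform bounds on $\Phi(\cdot,\tau)$, and compactness of $[1,2]$; and orthogonality $\big(f\circ\Phi(\cdot,\tau)\big)'\perp \n\circ\Phi(\cdot,\tau)$ holds a.e.\ since it holds for the original traces (the chain rule is valid at this regularity because the inner maps are Lipschitz). Thus each $\big(f|_{\Gamma_\tau},\n|_{\Gamma_\tau}\big)$ is a genuine $\Hhalf$-framed loop and the assignment is continuous.

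Granting the continuous path, Lemma~\ref{lem:H12frames_well_defined} shows the associated trihedra $(\frakt_\tau,\frakn_\tau,\n_\tau)$ form a continuous path in $\Hhalf(\Circ;\SO(3))$, so all of them lie in the same connected component; by Definition~\ref{def:extendableH32} (triviality/non-triviality of the trihedron) extendability is constant along $\tau$, giving the claim for $\tau=1,2$. One subtlety: the homotopy in $\W$ might force an intermediate curve to degenerate (fail simplicity) or to approach $\partial\W$; I would handle this by first replacing $\Gamma_1,\Gamma_2$ with nearby curves compactly contained in $\W$ to which they are $C^{1,1}$-close and isotopic through simple curves (possible since $f,\n$ extend continuously — via their $H^2$, $H^1$ traces — to a neighborhood), noting that a small $C^{1,1}$ perturbation of a curve produces a framed loop in the same component by the continuity just established. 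I expect the main obstacle to be exactly this regularity bookkeeping: producing the intermediate homotopy through $C^{1,1}$ simple curves with uniform geometric control, and invoking the composition lemmas with the correct roles of inner/outer maps so that the trace-composition is continuous in $\Hthreehalves\times\Hhalf$; the underlying topological fact (constancy of the connected component along a path) is then immediate.
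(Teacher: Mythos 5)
Your proposal follows essentially the same route as the paper: upgrade the homotopy to a $C^{1,1}$ family of curves, show that $\tau\mapsto (f,\n)|_{\Gamma_\tau}$ is continuous into $\Hthreehalves\times\Hhalf$ with uniform speed bounds, and then invoke Lemma~\ref{lem:H12frames_well_defined} to conclude that the trihedra stay in one connected component of $\Hhalf(\Circ;\SO(3))$. The one place where your justification differs (and is slightly off) is the continuity of the traces: Lemmas~\ref{lem:Hs_composition1}--\ref{lem:Hs_composition2} concern compositions with Lipschitz outer maps or bi-Lipschitz self-maps of the domain, not traces of a fixed $H^2(\W)$ function onto a varying curve $\Phi(\cdot,\tau):\Circ\to\W$; the paper instead flattens the foliation in local coordinates and reduces this to the continuity of translation operators on Sobolev spaces of $\R^2$, which is the cleaner (and correct) mechanism for this step.
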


\begin{proof}
First, we note that since $\Gamma_i$ are $C^{1,1}$, we have that $f|_{\Gamma_i}\in \HthreehalvesImm(\Gamma_i;\R^3) \cong \HthreehalvesImm(\Circ;\R^3)$ and $\n\in \Hhalf(\Gamma_i;\Sph) \cong \Hhalf(\Circ;\Sph) $ \cite[Theorem~1.5.1.2]{Gri85}.
Since $\Gamma_1,\Gamma_2\subset \bW$ are homotopic in $\W$, they can be connected by a $C^{1,1}$ homotopy $(\Gamma_t)_{t\in[1,2]}$ (for smooth curves it is a standard result in differential topology that a continuous homotopy can be upgraded to a smooth one, whereas in our case we need first to mollify the $C^{1,1}$ curve).
By the continuity of the traces along the foliation (in local coordinates, this is equivalent to continuity of traces of $\R^2$ maps along parallel lines, which is, in turn, equivalent to the continuity of the composition with a translation operator on $\R^2$), we have that $t\mapsto (f,\n)|_{\Gamma_t}$ is a continuous map to $\Hthreehalves(S^1;\R^3)\times \Hhalf(S^1;\R^3)$.
Further, note that there exist $C>c>0$ such that $f|_{\Gamma_t}\in \Hthreehalves_{\Imm,c,C}(\Circ;\R^3)$ for all $t\in [1,2]$, where $\Hthreehalves_{\Imm,c,C}(\Circ;\R^3)$ is defined in \lemref{lem:H12frames_well_defined};
this holds because $f|_{\Gamma_t}$ are restrictions of the same map $f$.
Thus, we conclude from \lemref{lem:H12frames_well_defined} that the associated trihedra $(\frakt_1,\frakn_1,\n_1)$ and $(\frakt_2,\frakn_2,\n_2)$ lie in the same connected component of $\Hhalf(\Circ;\SO(3))$, which completes the proof.
\end{proof}

We finish this section by proving \propref{prop:Burgers_Hhalf}, implying that the definition of Burgers vectors extends to the case of $H^{1/2}$-framed loops.

\Emph{Proof of \propref{prop:Burgers_Hhalf}:}
Let $(\gamma,\n)$ be a smooth framed loop, and denote, for $\vp,\vp'\in \Circ$, $v_\vp(\vp') = \Pi^{\gamma,\n}_{\vp,\vp'} \gamma'(\vp)$. 
By \eqref{eq:parallel_transport}, we have that
\[
v'_\vp(t) = -\ip{v_\vp(t),\n'(t)}\, \n(t).
\]
hence
\[
v_\vp(0) = \gamma'(\vp) + \int_0^{\vp} \ip{v_\vp(t),\n'(t)}\,dt.
\]
By the definition of the Burgers vector \eqref{eq:def_burgers} and Fubini's theorem we thus obtain
\beq
\begin{aligned}
B_{\gamma,\n,0} 
&= \int_{\Circ} \gamma'(\vp) \,d\vp + \int_{\Circ} \int_0^\vp \ip{v_\vp(t),\n'(t)} \n(t) \, dt\, d\vp \\
&= 0 + \int_{\Circ}  \ip{\int_{t}^{2\pi} v_\vp(t)\, d\vp,\n'(t)}\n(t) \, dt\\
&= \int_{\Circ}  \ip{\calL(t),\n'(t)} \n(t) \, dt \\
&= \n'[\calL\otimes \n].
\end{aligned}
\label{eq:burgers_low_reg}
\eeq
This extends to $\Hhalf$-framed loops by density of smooth framed loops (\propref{prop:connected_framed_loops}), since in this case $\calL\otimes \n\in \Hhalf$ by \lemref{lem:Hs_multiplication}, and $\n'\in H^{-1/2}$.
\hfill\ding{110}

\section{Isoperimetric inequality for framed loops: Proof of \thmref{thm:GB+iso}}
\label{sec:GB+iso}

We start by considering isoperimetric inequalities on the sphere. 
Weiner \cite{Wei74} proved the following inequality, as a particular case of a broader theorem, based on techniques developed by Banchoff and Pohl \cite{BP71}:

\begin{theorem}[Weiner, 1974]
\label{th:WeinerChern}
Let $\gamma \in C^2_{\Imm}(\Circ;\Sph)$ be an immersed loop on the sphere and let $\Gamma = \gamma(\Circ)$. The total length $L$ of $\Gamma$ satisfies the isoperimetric inequality,
\[
L^2 \ge \frac{1}{2} \int_{\Sph\times\Sph} w_{\Gamma }^2\, \VolSph \wedge \,\VolSph,
\]  
where for $p,q\in\Sph$, $w_{\Gamma }(p,q)$ is the (signed, or directed) number of intersections of $\Gamma$ with a geodesic segment connecting $p$ and $q$, and $\VolSph$ is the volume element of the sphere. 
\end{theorem}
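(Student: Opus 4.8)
The plan is to reprove the inequality in the spirit of Banchoff--Pohl \cite{BP71}: turn the integral-geometric right-hand side into a Sobolev norm of a ``winding primitive'' of $\Gamma$, and then invoke the sharp spherical isoperimetric inequality. Since $H_1(\Sph)=0$, the immersed loop $\Gamma=\gamma(\Circ)$, viewed as an integral $1$-cycle, bounds: there is an integer-valued function $w\in BV(\Sph)$, unique up to an additive constant, whose distributional derivative $Dw$ is concentrated on $\Gamma$, with conormal direction and density equal to the signed local multiplicity of $\gamma$. Two properties of $w$ will be used. First, by the area formula for $\gamma$, the total variation satisfies $\|Dw\|(\Sph)\le L$, with equality when $\gamma$ is embedded. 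Second --- and this is where $\gamma\in C^2_{\Imm}$ enters, through transversality and Sard's theorem --- for almost every pair $(p,q)\in\Sph\times\Sph$ the minimizing geodesic segment from $p$ to $q$ meets $\Gamma$ transversally and away from self-intersections, and the signed number of these intersections equals $w(q)-w(p)$; since this difference does not depend on the chosen path from $p$ to $q$, we get $w_\Gamma(p,q)=w(q)-w(p)$ for a.e.\ $(p,q)$.

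With this in hand, I would rewrite the right-hand side. Using $w_\Gamma(p,q)=w(q)-w(p)$ and expanding the square,
\[
\tfrac12\int_{\Sph\times\Sph} w_\Gamma^2\,\VolSph\wedge\VolSph
= 4\pi\int_{\Sph} w^2\,\VolSph-\Big(\int_{\Sph} w\,\VolSph\Big)^2
= 4\pi\,\big\|w-\bar w\big\|_{L^2(\Sph)}^2 ,
\]
where $\bar w=\tfrac1{4\pi}\int_\Sph w\,\VolSph$; the right-hand expressions are visibly invariant under $w\mapsto w+\mathrm{const}$, as they must be. Thus \thmref{th:WeinerChern} is equivalent to the sharp Poincar\'e--Sobolev inequality on the sphere,
\[
\big\|u-\bar u\big\|_{L^2(\Sph)}^2\le \tfrac1{4\pi}\,\|Du\|(\Sph)^2 \qquad (u\in BV(\Sph)),
\]
applied to $u=w$, together with $\|Dw\|(\Sph)\le L$.

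To prove this $BV$ inequality I would use the coarea formula and the layer-cake decomposition. After adding a constant we may assume $w\ge0$; then $w-\bar w=\int_0^\infty\!\big(\ind_{\{w>t\}}-\tfrac1{4\pi}|\{w>t\}|\big)\,dt$, and Minkowski's integral inequality reduces matters to the case of an indicator: for a set $A\subset\Sph$ of finite perimeter,
\[
\big\|\ind_A-\tfrac{|A|}{4\pi}\big\|_{L^2(\Sph)}^2=\frac{|A|\,(4\pi-|A|)}{4\pi}\le\frac{\operatorname{Per}(A)^2}{4\pi},
\]
the last inequality being the classical isoperimetric inequality on the round sphere $\Sph$ (with equality for geodesic discs). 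Combining this with the coarea identity $\|Dw\|(\Sph)=\int_{\R}\operatorname{Per}(\{w>t\})\,dt$ gives $\|w-\bar w\|_{L^2(\Sph)}\le\tfrac1{\sqrt{4\pi}}\|Dw\|(\Sph)$, and hence $\tfrac12\int_{\Sph\times\Sph}w_\Gamma^2=4\pi\|w-\bar w\|_{L^2}^2\le\|Dw\|(\Sph)^2\le L^2$, which is the assertion.

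I expect the main obstacle to lie in the first step: carefully constructing the primitive $w$ and checking that $w_\Gamma(p,q)=w(q)-w(p)$ for a.e.\ pair --- i.e.\ the genericity of transversality of minimizing geodesic segments to $\Gamma$, together with the fact that $w$ is a bona fide $BV$ function satisfying $\|Dw\|(\Sph)\le L$. Once this dictionary between the integral-geometric quantity and a Sobolev norm is in place, the analytic core (coarea plus the spherical isoperimetric inequality) is routine. As a sanity check, for an embedded $\Gamma$ bounding a region of area $A$ one has $w=\ind_A$ up to a constant, $\tfrac12\int_{\Sph\times\Sph}w_\Gamma^2=A(4\pi-A)$, and the inequality recovers the spherical isoperimetric inequality $L^2\ge A(4\pi-A)$, with equality exactly for geodesic circles --- so the constant $\tfrac12$ in the statement is sharp.
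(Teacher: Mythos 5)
Your proposal is correct in outline, but note that the paper does not prove \thmref{th:WeinerChern} at all: it is quoted verbatim from Weiner \cite{Wei74}, who obtains it by integral geometry on $\Sph$ following Banchoff--Pohl \cite{BP71}, and the paper only adds remarks on why $w_\Gamma$ is defined a.e.\ and extends to non-immersed $C^2$ maps by density. What you supply is therefore a genuinely different, self-contained route: you replace the integral-geometric argument by (i) the existence of an integer-valued winding primitive $w\in BV(\Sph)$ with $\|Dw\|(\Sph)\le L$ and $w_\Gamma(p,q)=w(q)-w(p)$ a.e., (ii) the algebraic identity $\tfrac12\int_{\Sph\times\Sph}w_\Gamma^2 = 4\pi\|w-\bar w\|_{L^2}^2$, and (iii) a sharp $BV$ Poincar\'e inequality proved by layer-cake, Minkowski's integral inequality, the coarea formula and the spherical isoperimetric inequality $\operatorname{Per}(A)^2\ge |A|(4\pi-|A|)$; all the computations I checked (the expansion of the square, the identity $\|\ind_A-|A|/4\pi\|_{L^2}^2=|A|(4\pi-|A|)/4\pi$, and the equality cases for embedded curves and geodesic circles) are right. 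The one place where real work remains is exactly the one you flag: constructing $w$ (e.g.\ as the multiplicity of an integral $2$-current filling $\gamma_\#[\Circ]$, which exists since $H_1(\Sph)=0$, so that $\|Dw\|(\Sph)=\mathbf{M}(\gamma_\#[\Circ])\le L$) and the Sard-type genericity of minimizing geodesics needed for $w_\Gamma(p,q)=w(q)-w(p)$ a.e.; these are standard but not free, and the paper itself leans on precisely this kind of statement when it asserts that $w_\Gamma$ is well defined off a null set. The trade-off is that Weiner's formulation is stated directly for the integral-geometric quantity $w_\Gamma$ and comes with a ready-made reference, whereas your argument is more elementary in its analytic core, makes the reduction to the classical isoperimetric inequality transparent (and hence the sharpness of the constant $\tfrac12$), and is closer in spirit to the degree-theoretic identity \eqref{eq:Q_w} that the paper uses immediately afterwards in Lemma~\ref{lem:7}.
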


\[
\btkz
\node at (0,0) {\includegraphics[width=6cm]{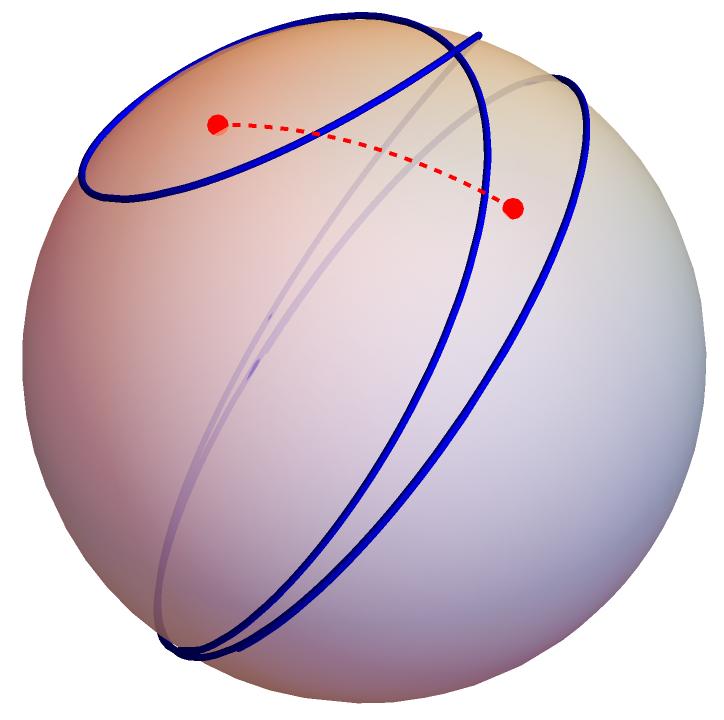}};
\node[text=blue] at (1,-1) {$\Gamma$};
\node[text=red] at (-1.1,2.2) {$p$};
\node[text=red] at (1.3,1.6) {$q$};

\etkz
\]

For $w_{\Gamma }(p,q)$, which Weiner refers to as a winding number, to be well-defined, the geodesic from $p$ to $q$ has to intersect $\Gamma$ transversely.  One can show that for rectifiable curves (and a fortiori for $C^2$-immersions), $w_{\Gamma}$ is well-defined on $\Sph\times\Sph$, except possibly on a set of measure zero, and is furthermore integrable (being piecewise constant). 
Likewise, since antipodal points  have measure zero in $\Sph\times\Sph$, we may define $w_{\Gamma}$ using the shortest geodesic. Alternatively, $w_\Gamma(p,q)$ can be defined everywhere on $(\SphG)\times(\SphG)$ by approximating $\Gamma$ uniformly by smooth curves nowhere tangent to the geodesic from $p$ to $q$, using the fact that $w_\Gamma$ is homotopy-invariant.

\thmref{th:WeinerChern} also holds for maps $\gamma\in C^2(\Circ;\Sph)$ that are not necessarily immersions. Indeed,  $C^2_{\Imm}(\Circ;\Sph)$ is dense in $C^2(\Circ;\Sph)$  \cite[Thm.~2.12]{Hir76}, hence the extension follows by approximation.

We proceed to use this isoperimetric inequality in the context of the Gauss map of a disc $D\subset\R^2$ immersed in $\R^3$. 
Let $f\in C^\infty_{\Imm}(\bD;\R^3)$ be an immersion with $\n\in C^\infty(\bD;\Sph)$ the corresponding Gauss map.
The curve $\Gamma = \n(\dD)$ partitions $\SphG$ into a finite number of connected components $\{\Lambda_i\}_{i\in I}$. For $p\in\SphG$, we denote by $\Lambda_p$ the connected component containing $p$. The topological degree of $\n$, denoted here $Q:\SphG\to\R$, can be defined via the pullback of volume forms \cite[p.~457]{Lee12},
\[
Q(p) = \int_D \n^*\eta,
\]
where $\eta$ is any smooth normalized 2-form supported in $\Lambda_p$; it assumes integer values and is constant on $\Lambda_p$ (the more standard notation for $Q(p)$ is $\deg(\n,D,p)$).
If $p$ is a regular value of $\n$, then $Q(p)$ coincides with the signed multiplicity of the image of $\n$ at $p$ \cite[Theorem.~17.35]{Lee12}. 

The degree $Q$ and winding number $w_\Gamma$ are related:  for $p,q\in\SphG$, we have
\beq
Q(p) - Q(q) = w_\Gamma(p,q).
\label{eq:Q_w}
\eeq
From this we obtain:

\begin{lemma}
\label{lem:7}
Let $f\in C^\infty_{\Imm}(\bD;\R^3)$.  
Then, the length $L$ of $\Gamma = \n(\dD)$ satisfies the isoperimetric inequality,
\[
L^2 \ge  4\pi \int_D (Q\circ\n)\, K\,\VolG - \brk{\int_D K\,\VolG}^2.
\]
\end{lemma}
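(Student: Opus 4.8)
The idea is to combine Weiner's isoperimetric inequality (\thmref{th:WeinerChern}) applied to the Gauss map $\n\in C^\infty(\bD;\Sph)$ with the degree--winding number relation \eqref{eq:Q_w} and Gauss's \emph{theorema egregium}. Since $f$ is smooth, $\n|_{\dD}\in C^\infty(\Circ;\Sph)$, so \thmref{th:WeinerChern} gives
\[
L^2 \ge \frac12 \int_{\Sph\times\Sph} w_\Gamma(p,q)^2\, \VolSph\wedge\VolSph.
\]
The plan is to expand the right-hand side using \eqref{eq:Q_w}, which reads $w_\Gamma(p,q) = Q(p) - Q(q)$ for a.e.\ $(p,q)$. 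Squaring and integrating over the product $\Sph\times\Sph$ (which has total measure $(4\pi)^2$, with each factor $4\pi$),
\[
\frac12\int_{\Sph\times\Sph} (Q(p)-Q(q))^2 = \frac12\left( 2\cdot 4\pi \int_{\Sph} Q^2 - 2\left(\int_{\Sph} Q\right)^2\right) = 4\pi\int_{\Sph} Q^2\, \VolSph - \left(\int_{\Sph} Q\, \VolSph\right)^2.
\]

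Next I would rewrite both integrals over $\Sph$ as integrals over $D$ via the Gauss map. The classical change-of-variables / area formula for degree states that for any integrable $g$ on $\Sph$,
\[
\int_{\Sph} g(p)\, Q(p)\, \VolSph(p) = \int_D (g\circ\n)\, \n^*\VolSph,
\]
and by Gauss's \emph{theorema egregium} the pullback of the area form of $\Sph$ under the Gauss map is $\n^*\VolSph = K\,\VolG$ (this is precisely the statement that $K$ is the Jacobian determinant of $\n$ with respect to $\g$). Applying this with $g = Q$ gives $\int_{\Sph} Q^2\,\VolSph = \int_D (Q\circ\n)\, K\,\VolG$, and with $g\equiv 1$ gives $\int_{\Sph} Q\,\VolSph = \int_D K\,\VolG$. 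Substituting into the displayed lower bound yields exactly
\[
L^2 \ge 4\pi \int_D (Q\circ\n)\, K\,\VolG - \left(\int_D K\,\VolG\right)^2,
\]
as claimed.

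\textbf{Where the care is needed.} The routine parts are the algebraic expansion of $\int (Q(p)-Q(q))^2$ and the observation that $\n^*\VolSph = K\,\VolG$. The point requiring a little attention is justifying the degree change-of-variables formula $\int_{\Sph} g\, Q\, \VolSph = \int_D (g\circ\n)\, \n^*\VolSph$ in the generality needed: $\n$ need not be an immersion, and $K$ can change sign, so one cannot simply invoke an embedding change-of-variables. However, this identity is standard for the Brouwer degree (it follows, e.g., from approximating $g$ by forms supported near regular values, on which $Q$ is locally constant and the formula is the definition of degree via pullback, together with additivity over the connected components $\Lambda_i$ of $\SphG$ and the fact that $\Gamma=\n(\dD)$ has measure zero on $\Sph$). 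One also needs $Q\in L^\infty$ (it is integer-valued and bounded, being constant on each of the finitely many $\Lambda_i$) and $w_\Gamma\in L^2(\Sph\times\Sph)$, which holds since $w_\Gamma$ is bounded and the sphere has finite measure; these ensure all integrals above are finite and the manipulations are legitimate. This last measure-theoretic bookkeeping is the only mild obstacle; everything else is a direct computation.
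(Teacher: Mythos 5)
Your proof is correct and follows essentially the same route as the paper's: Weiner's inequality applied to $\n|_{\dD}$, the expansion of $\tfrac12\int(Q(p)-Q(q))^2$ via Fubini, and the identity $\int_{\Sph} hQ\,\VolSph = \int_D (h\circ\n)K\,\VolG$ (justified exactly as you do, by approximating $h$ with functions supported on the components $\Lambda_i$), applied with $h=Q$ and $h=1$. No changes needed.
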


\begin{proof}
Combining \thmref{th:WeinerChern} and \eqref{eq:Q_w} and using Fubini,
\[
\begin{aligned}
L^2 &\ge \frac{1}{2} \int_{\Sph\times\Sph} (Q(p) - Q(q))^2 \, \VolSph(p) \wedge \VolSph(q) \\
&= 4\pi \int_{\Sph\times\Sph} Q^2 \, \VolSph - \brk{\int_{\Sph} Q\,  \VolSph}^2.
\end{aligned}
\]
Next, for every bounded measurable function $h:\Sph\to\R$, 
\beq
\int_D (h\circ\n) K \,\VolG = \int_D (h\circ\n) \, \n^*\VolSph = \int_{\Sph} Q\, h\, \VolSph,
\label{eq:SverakSphere}
\eeq
where in the first equality we used the Gauss theorema egregium, $K \,\VolG = \n^*\VolSph$, and the second equality follows from the definition of $Q$ by approximating $h$ as an $L^1$ limit of sums of functions, each supported on a single $\Lambda_i$.
The claim now follows by applying \eqref{eq:SverakSphere}, once for $h=1$ and once for $h=Q$.
\end{proof}

\begin{lemma}
\label{lem:10}
The following inequality holds,
\[
\int_D (Q\circ\n)\, K\,\VolG \ge \Abs{\int_D K\,\VolG}.
\]
\end{lemma}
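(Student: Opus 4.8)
\textbf{Proof plan for \lemref{lem:10}.}

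The plan is to exploit the sign structure of the degree function $Q\circ\n$ relative to the ``total winding'' $\int_D K\,\VolG = \int_\Sph Q\,\VolSph$. Write $m = \int_D K\,\VolG$; by the computation \eqref{eq:SverakSphere} with $h=1$ we have $m = \int_\Sph Q\,\VolSph$, and with general $h$ we have $\int_D (h\circ\n)K\,\VolG = \int_\Sph Q\,h\,\VolSph$. So the claim is equivalent to the purely spherical statement $\int_\Sph Q^2\,\VolSph \ge |m|$, where $Q:\SphG\to\bbZ$ is constant on each complementary component $\Lambda_i$ and $\int_\Sph Q\,\VolSph = m$. Actually that inequality is false in general (e.g.\ $Q\equiv 1$ on a small cap and $0$ elsewhere gives $\int Q^2 = \int Q$ small), so the right move is the \emph{sharper} identity coming from the jump relation \eqref{eq:Q_w}: the degree $Q(p)$ differs from $Q(q)$ by $w_\Gamma(p,q)$, and by choosing $q$ appropriately we can normalize $Q$. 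The key point is that on the \emph{unbounded} component — the component $\Lambda_\infty$ of $\Sph\setminus\Gamma$ containing points not in the image of $\n|_{\bar D}$ — we have $Q\equiv 0$, because a regular value outside the image has zero preimage. Hence $Q$ takes the value $0$ somewhere, so by \eqref{eq:Q_w}, $Q(p) = w_\Gamma(p,q_0)$ for $q_0\in\Lambda_\infty$, i.e.\ $Q$ is itself an integer-valued winding number. Wait — but that reasoning needs $\n(\bar D) \ne \Sph$; if $\n$ is onto there may be no such $q_0$.

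So here is the robust argument I would actually run. First I would reduce to the case $m \ge 0$ (replacing $f$ by a reflection, which flips the sign of $K\,\VolG$ and of $Q$ simultaneously, leaves $\int_D (Q\circ\n)K\,\VolG$ unchanged and flips $m$; so without loss $m\ge 0$, and I must show $\int_D (Q\circ\n)K\,\VolG \ge m$). Then the heart of the matter is the elementary inequality: for any integer $n$, $n^2 \ge n$, with equality iff $n\in\{0,1\}$. Therefore, pointwise on $\Sph$, $Q^2 \ge Q$, so
\[
\int_D (Q\circ\n)\,K\,\VolG = \int_\Sph Q^2\,\VolSph \ge \int_\Sph Q\,\VolSph = m = \Abs{\int_D K\,\VolG},
\]
using \eqref{eq:SverakSphere} with $h = Q$ for the first equality and $h=1$ for the identification of $m$. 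This is clean and requires no topological case analysis at all; the only inputs are that $Q$ is integer-valued (already stated in the excerpt, ``it assumes integer values and is constant on $\Lambda_p$'') and the two instances of \eqref{eq:SverakSphere}. The sign reduction handles $m<0$.

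The main obstacle, and the only place care is needed, is the reduction step: I must verify that pre-composing $f$ with an orientation-reversing isometry $R$ of $\R^3$ (equivalently, post-composing, or equivalently replacing $\n$ by $R\circ\n$ and accounting for the orientation flip on $D$) indeed sends $(Q,K\,\VolG)$ to $(-Q', -K\,\VolG')$ in a way that leaves the product integral invariant and negates $m$. Concretely: $K$ is intrinsic hence unchanged, but the normal $\n$ is replaced by $-\n$ (choosing the other unit normal), which composes the Gauss map with the antipodal map $a:\Sph\to\Sph$; since $a$ is orientation-reversing, $\n^*\VolSph$ changes sign, so $K\,\VolG = \n^*\VolSph$ forces us instead to keep track that it is $\VolG$ (a positive measure) that is fixed while $K$ itself is unchanged — so in fact $m$ is \emph{not} changed by this operation. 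The genuinely sign-flipping operation is instead to note that the statement is invariant under $K\mapsto -K$ only if we also flip $Q$, which we cannot do by an ambient symmetry. Hence I expect the actual writeup to avoid the reduction and instead treat $m\le 0$ directly via the dual inequality $n^2\ge -n$ for $n\le 0$: split $\Sph$ into $\{Q>0\}$ and $\{Q<0\}$ — no, the cleanest is simply $Q^2\ge |Q| \cdot \sgn(m)\cdot\sgn(Q)$... I would finalize by proving, for all integers $n$, both $n^2\ge n$ and $n^2\ge -n$, hence $n^2 \ge |n| \ge \pm n$, giving $\int_\Sph Q^2\,\VolSph \ge \Abs{\int_\Sph Q\,\VolSph} = \Abs{\int_D K\,\VolG}$, and the left side equals $\int_D (Q\circ\n)K\,\VolG$ by \eqref{eq:SverakSphere}. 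That bypasses the reduction entirely; the integer inequality $n^2\ge|n|$ is the whole content, and I expect no real obstacle beyond stating it.
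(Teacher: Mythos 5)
Your final argument is correct and is essentially the paper's own proof: both rest on the integer-valuedness of $Q$, i.e.\ the inequality $Q^2\ge|Q|$, combined with \eqref{eq:SverakSphere} --- the paper phrases this component-by-component on the $\Lambda_i$ (taking $h=\ind_{\Lambda_i}$, noting the sign of $Q|_{\Lambda_i}$ matches that of $\int_{\n^{-1}(\Lambda_i)}K\,\VolG$, and finishing with the triangle inequality), while you phrase it pointwise on $\Sph$ with $h=Q$. The detours in your write-up (the unbounded-component normalization, the reflection reduction to $m\ge 0$) are correctly discarded and play no role in the argument you settle on.
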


\begin{proof}
Writing $Q = \sum_i Q|_{\Lambda_i} \,\ind_{\Lambda_i}$ yields
\[
\int_D (Q\circ\n)\, K\,\VolG = \sum_{i\in I} Q|_{\Lambda_i} \int_{\n^{-1}(\Lambda_i)} K\,\VolG.
\]
Substituting $h = \ind_{\Lambda_i}$ in \eqref{eq:SverakSphere},
\beq
Q|_{\Lambda_i} |\Lambda_i| = \int_{\n^{-1}(\Lambda_i)} K \,\VolG.
\label{eq:Q_on_Lambda}
\eeq
Since $Q|_{\Lambda_i}$ is an integer, whose sign coincides by \eqref{eq:Q_on_Lambda} with the sign of the integral of $K$, it follows that
\[
\int_D (Q\circ\n)\, K\,\VolG \ge \sum_{i\in I} \Abs{\int_{\n^{-1}(\Lambda_i)} K\,\VolG}
\ge  \Abs{\sum_{i\in I} \int_{\n^{-1}(\Lambda_i)} K\,\VolG} = \Abs{\int_D K\,\VolG}.
\]
\end{proof}

Combining \propref{lem:7} and \lemref{lem:10} we obtain:

\begin{proposition}
\label{prop:BG+iso_disc}
Let $f\in C^\infty_{\Imm}(\bD;\R^3)$.  
Then,
\[
L^2 \ge \brk{4\pi -  \Abs{\int_D K\,\VolG}} \Abs{\int_D K\,\VolG},
\]
where $L$ is the length of $\Gamma = \n(\dD)$.
\end{proposition}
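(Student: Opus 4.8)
The plan is to combine \propref{lem:7} and \lemref{lem:10} directly, since these two results are precisely what is needed. From \propref{lem:7} we have the bound
\[
L^2 \ge 4\pi \int_D (Q\circ\n)\, K\,\VolG - \brk{\int_D K\,\VolG}^2,
\]
and \lemref{lem:10} gives $\int_D (Q\circ\n)\, K\,\VolG \ge \bigl|\int_D K\,\VolG\bigr|$. Substituting the latter into the former, and writing $\kappa := \bigl|\int_D K\,\VolG\bigr| \ge 0$, we get
\[
L^2 \ge 4\pi \kappa - \kappa^2 = (4\pi - \kappa)\,\kappa = \brk{4\pi - \Abs{\int_D K\,\VolG}} \Abs{\int_D K\,\VolG},
\]
which is exactly the claimed inequality. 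Note that $\brk{\int_D K\,\VolG}^2 = \kappa^2$ since squaring removes the sign, so no case distinction on the sign of the total curvature is needed.

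There is essentially no obstacle here: the proposition is a one-line algebraic consequence of the two lemmas just proved. The only thing to be careful about is that the coefficient of $\kappa^2$ coming out of \propref{lem:7} is exactly $-1$ and matches the $-\kappa^2$ in the target expression; this is immediate. One might remark that the inequality is only non-trivial when $\kappa \le 4\pi$ — for $\kappa > 4\pi$ the right-hand side is negative and the statement is vacuous (the left-hand side $L^2$ being non-negative) — but this does not require any argument.

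If one wishes to spell it out slightly more, the single display chain reads: by \propref{lem:7} and then \lemref{lem:10},
\[
L^2 \ge 4\pi \int_D (Q\circ\n)\, K\,\VolG - \brk{\int_D K\,\VolG}^2 \ge 4\pi \Abs{\int_D K\,\VolG} - \Abs{\int_D K\,\VolG}^2,
\]
and the right-hand side equals $\brk{4\pi - \Abs{\int_D K\,\VolG}} \Abs{\int_D K\,\VolG}$, completing the proof.
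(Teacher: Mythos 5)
Your proof is correct and is exactly the paper's argument: the proposition is obtained by substituting the bound of \lemref{lem:10} into the inequality of \lemref{lem:7} and noting that $\brk{\int_D K\,\VolG}^2 = \Abs{\int_D K\,\VolG}^2$. Nothing is missing.
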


With that, we proceed to prove \thmref{thm:GB+iso}. The idea is to extend the boundary data $(\gamma,\n)$ into an immersion of a disc, using eventually \propref{prop:BG+iso_disc}.

\paragraph{Proof of \thmref{thm:GB+iso}} 
Since the result is trivial unless $\n \in W^{1,1}(\Circ;\Sph)$, we can assume that.
In view of  \lemref{lem:H32_W11_frames}, it suffices to prove the theorem for smooth framed loops, as the result follows then by approximation.

First, assume that $(\gamma,\n) $ is extendable,  and let $f\in C^\infty_{\Imm}(\bD;\R^3)$ be a smooth immersion such that $(f,\n)|_{\dD}$ coincides with the given framed loop. Denote by $\g$ the smooth induced metric on $D$ and by $K$ the corresponding Gaussian curvature. 
By the Gauss-Bonnet theorem, 
\[
\int_D K\,\VolG + \int_{\dD} \kg\,d\ell = 2\pi,
\]
Thus, we need to prove that
\[
\brk{\int_{\dD} |D_s\n|\, d\ell}^2 \ge 
\brk{4\pi - \Abs{\int_D K\,\VolG}} \Abs{\int_D K\,\VolG}.
\]
Noting that the integral on the left-hand side is
the length $L$ of $\Gamma=\n(\dD)$, this is precisely \propref{prop:BG+iso_disc}.

Assume now that $(\gamma,\n) $ is non-extendable. Without loss of generality, assume that $\gamma(0) = (1,0,0)$ and $\n(0) = (0,0,1)$.
We concatenate $(\gamma,\n)$  with $(\gamma_0,\n_0)$, as defined in \eqref{eq:trivial_frame}, to obtain a new, continuous piecewise smooth framed loop $(\tgamma,\tilde{\n})$; we denote by $\tilde{\kappa}_g$ and $d\tilde{\ell}$ the corresponding geodesic curvature and length form.
Since the moving trihedron corresponding to $(\gamma_0,\n_0)$ is a generator of the homotopy group $\bbZ/2\bbZ$, so is the moving trihedron corresponding to  $(\tgamma,\tilde{\n})$, and it follows from \propref{prop:connected_framed_loops} that $(\tgamma,\tilde{\n}) $ is extendable. 
Since we have already established the inequality \eqref{eq:GB+iso} for non-smooth extendable frames (in this case, piecewise-smooth),
we obtain
\[
\brk{\int_{\Circ} |D_s\tilde{\n}| d\tilde{\ell}}^2 \ge 
\brk{4\pi - \Abs{2\pi - \int_{\Circ} \tkg\, d\tilde{\ell}}} \Abs{2\pi - \int_{\Circ} \tkg\, d\tilde{\ell}}.
\]
Together with
\[
\int_{\Circ} |D_s\tilde{\n}|\, d\tilde{\ell} = \int_{\Circ} |D_s\n|\, d\ell 
\Textand
\int_{\Circ} \tkg\, d\tilde{\ell} = 2\pi + \int_{\Circ} \kg\, d\ell,
\]
the proof is complete.
\hfill$\blacksquare$

\section{Curvature estimate: Proof of \thmref{thm:main_theorem}}
\label{sec:main_theorem}

We start by showing that we may assume without loss of generality that the domain $\W$ is smooth, and replace Condition 1 in \thmref{thm:main_theorem} by $f\in C^1(\bW;\R^3)$. Indeed, suppose that the theorem holds under this extra-regularity condition, and let $\W$ be Lipschitz. 
Approximating $\W$ from the inside by smooth subdomains $U\Subset \W$ with the same topology\footnote{This is always possible for finitely-connected domains in the plane due to Koebe's circle domain theorem (see \cite{Bis07}).} (in which case $f\in C^1(\W;\R^3)$ implies that $f\in C^1(\bU;\R^3)$), the lower bound for the domains $U$ implies via dominated convergence the lower bound for the domain $\W$.

\subsection{The smooth case}
We start by proving the theorem for smooth immersions, namely:

\begin{quote}
Let $\W\subset\R^2$ be a finitely-connected, bounded, Lipschitz domain, and let $f\in C^\infty_{\Imm}(\bW;\R^3)$. 
If for every connected component $\Gamma\subset \dW$, the framed loop $(f,\n)|_{\Gamma} $ is extendable,
then
\[
\|d\n\|_{L^2(\W,\g)}^2 \ge \frac{4\pi - \|K\|_{\LoneMC}}{\|K\|_{\LoneMC}} \|K\|_{\HMoneMC}^2.
\]
\end{quote}

Here, $ \|K\|_{\LoneMC}$ and $\|K\|_{\HMoneMC}$ are as defined in \eqref{eq:LoneMC} and \eqref{eq:HMoneMC}.

\paragraph{The Riesz representer $u_K$}
Recall that we denote the connected components of $\dW$ by $\Gamma_i$, $i=0,\dots, m$, with $\Gamma_0$ being the outer boundary.
Let $f\in C^\infty_{\Imm}(\bW;\R^3)$ and denote, as above, by $\g$ and $K$ the pullback metric and the corresponding Gaussian curvature. 
For $m>0$, Let $K_i$ be the curvatures enclosed ``in the holes" as given by \eqref{eq:Ki}.
Assuming that $K$ and $K_i$ are not identically zero, 
there exists a unique test function $u^*\in \HoneMC$ maximizing  \eqref{eq:HMoneMC}, and satisfying $\|u^*\|_{\dotHnorm}=1$. 
A multiple of $u^*$, denoted $u_K\in C^\infty(\W)$ satisfies the elliptic boundary-value problem: 
\beq
\label{eq:ELPDE}
\begin{cases}
- \Delta_\g u_K=K 		&\quad\text{in }\,\, \Omega  \\
u_K = 0				&\quad\text{on }\,\, \Gamma_0 \\
u_K=\mathrm{constant} 	&\quad\text{on }\,\, \Gamma_i, \, i=1,\ldots,m, \\
\int_{\Gamma_i} \frac{\partial u_K}{\partial \nu }\, \iVolG = K_i&\quad\text{on }\,\, \Gamma_i, \\
\end{cases}
\eeq    
where $\Delta_\g$ is the Laplace-Beltrami operator, and $\nu$ is the outward-pointing normal; see Appendix \ref{sect:AppB}.
Such problems are known as \emph{floating potential problems}.
It follows immediately from \eqref{eq:HMoneMC} and \eqref{eq:ELPDE} that we have the dual norm relation
 \begin{equation}
\label{eq:dualnorm}
\|K\|_{\HMoneMC} = \left\| u_K \right\|_{\dotHnorm}. 
\end{equation}

As an immediate consequence of Sard's theorem and the preimage theorem \cite[p.~21]{GP74},
for almost every $\lambda$ in the range of $u_K$, the level set $\Glam = u_K^{-1}(\lambda)$ is a one-dimensional submanifold of $\W$.
Since $\W$ is compact, $\Glam$ is a finite disjoint union of $I(\lambda)\in\bbN$ closed smooth curves, 
\[
\Glam = \bigcup_{j=1}^{I(\lambda)}  \Glami.
\]
Denote by $\Wlami\subset\W$ the domain enclosed by $\Glami$.
For every regular value $\lambda$ and every $j$, 
\beq
\begin{aligned}
\int_{\Glami} |du_K|\,\iVolG &= 
\Abs{\int_{\Glami} \pd{u_K}{\nu}\,\iVolG} 
= \Abs{\int_{\Wlami}
\Delta_\g u_K \,\VolG - \sum_{\Gamma_i\subset \partial\Wlami} \int_{\Gamma_i} \frac{\partial u_K}{\partial \nu }\, \iVolG}  \\
&= \Abs{\int_{\Wlami} K \,\VolG + \sum_{\Gamma_i\subset \partial\Wlami} K_i}
\le \|K\|_{\LoneMC}.
\end{aligned}
\label{eq:use_many_times}
\eeq
The first equality follows from $\Glami$ being a level set of $u_K$, and the fact that the normal derivative of $u_K$ on $\Glami$ has constant sign.  
The second equality follows from Green's theorem. 
The third equality follows from \eqref{eq:ELPDE}  and the last inequality is immediate.

\paragraph{Completing the proof}
We complete the proof of \thmref{thm:main_theorem} for smooth immersions:
\[
\begin{aligned}
\|K\|_{\LoneMC}\, \|d\n\|_{L^2(\W)}^2 
&\overset{(*)}{=} \|K\|_{\LoneMC} \int_{\R} \brk{\sum_{j=1}^{I(\lambda)} \int_{\Glami} \frac{|d\n|^2}{|du_K|} \,\iVolG  }\, d\lambda \\
&\overset{\eqref{eq:use_many_times}}{\ge} \int_{\R} \sum_{j=1}^{I(\lambda)} \brk{ \int_{\Glami} |du_K| \,\iVolG  } \brk{\int_{\Glami} \frac{|d\n|^2}{|du_K|} \,\iVolG}\, d\lambda \\
&\overset{(**)}{\ge}  \int_{\R}  \sum_{j=1}^{I(\lambda)} \brk{\int_{\Glami} |d\n| \,\iVolG }^2\, d\lambda \\
 &\overset{(***)}{\ge} \int_{\R} \sum_{j=1}^{I(\lambda)} \brk{\brk{4\pi -  \Abs{ \int_{\Wlami} K\,\VolG + \sum_{\Gamma_i\subset \partial\Wlami} K_i}} \Abs{ \int_{\Wlami} K\,\VolG + \sum_{\Gamma_i\subset \partial\Wlami} K_i}} \, d\lambda \\
&\overset{\eqref{eq:use_many_times}}{\ge} \brk{4\pi - \|K\|_{\LoneMC}}  \int_{\R}  \brk{\sum_{j=1}^{I(\lambda)} \int_{\Glami} |du_K|\,\iVolG} \, d\lambda \\
&\overset{(*)}{=} \brk{4\pi - \|K\|_{\LoneMC}} \|du_K\|_{L^2(\W)}^2 \\
&\overset{\eqref{eq:dualnorm}}{=} \brk{4\pi - \|K\|_{\LoneMC}} \|K\|_{\HMoneMC}^2.
\end{aligned}
\]
The passages marked by (*) follow from the coarea formula \cite[Sec.~3.4]{EG15}, and (**) follows from the Cauchy-Schwarz inequality.

It remains to justify transition (***), that is, the inequality 
\beq
\brk{\int_{\Glami} |d\n| \,\iVolG }^2 \ge
\brk{4\pi -  \Abs{\int_{\Wlami} K\,\VolG + \sum_{\Gamma_i\subset \partial\Wlami} K_i}} \Abs{\int_{\Wlami} K\,\VolG + \sum_{\Gamma_i\subset \partial\Wlami} K_i}
\label{eq:justify_me}
\eeq
for almost every $\lambda$ in the range of $u_K$ and every $j$. Let $(\tgamma,\tilde{\n})$ denote the restrictions of $f$ and $\n$ to $\Glami$ and by $\tkg$ the corresponding geodesic curvature. Then, 
\[
|d\n| \,\iVolG \ge |D_s\tilde{\n}| \, d\ell
\qquad
\text{on $\Glami$},
\]
and by the Gauss-Bonnet theorem and the definition of $K_i$,
\[
2\pi - \int_{\Glami} \tkg  \,\iVolG = \int_{\Wlami} K\,\VolG + \sum_{\Gamma_i\subset \partial\Wlami} K_i.
\]
Eq.~\eqref{eq:justify_me} follows then from \thmref{thm:GB+iso}, along with the assumption that $f$ extendable, which by definition implies the extensibility of its restriction to every regular enough simple closed curve.  

\subsection{Lower regularity}

We now extend the result of the previous section to immersions of lower regularity.

\paragraph{Case 1: $f\in \HtwoImm(\W;\R^3) \cap C^1(\bW;\R^3)$} 
For this case, we need the following Lemma: 

\begin{lemma}\label{lem:immersion_convergence}
Let $f\in \HtwoImm(\W;\R^3) \cap C^1(\bW;\R^3)$ be an extendable immersion.
Let $f_n\in C^\infty(\bW;\R^3)$ be a sequence converging to $f$ both in $\Htwo(\W;\R^3)$ and in $C^1(\bW;\R^3)$. Denote by $\g$ and $\g_n$ the pullback metrics induced by $f$ and $f_n$, respectively, and by $\n$ and $\n_n$ the corresponding Gauss maps.
Then the following holds:
\begin{enumerate}[itemsep=0pt,label=(\alph*)]
\item Immersion property: $f_n\in C^\infty_{\Imm}(\bW;\R^3)$ for $n$ large enough.
\item Metric convergence: $\g_n \to \g$ and $\g_n^{-1} \to \g^{-1}$ in $\Hone(\W;\R^2\otimes\R^2)$ and in $C^0(\bW;\R^2\otimes\R^2)$.
\item Gauss map convergence: $\n_n\to \n$ in $\Hone(\W;\Sph)$ and in $C^0(\bW;\Sph)$.
\item Gaussian curvature convergence: $K_n\to K$ in $L^1(\W)$ and $K_{n,i}\to K_i$ for each $i\in I$.
\item Extendability: $f_n$ is extendable for $n$ large enough.
\end{enumerate}
\end{lemma}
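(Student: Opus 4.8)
The plan is to verify (a)–(e) in turn, each time exploiting the two modes of convergence $f_n\to f$ — in $\Htwo(\W;\R^3)$ and in $C^1(\bW;\R^3)$ — together with the product and composition estimates for $\Hs$-maps (\lemref{lem:Hs_composition1}, \lemref{lem:Hs_composition2}, \lemref{lem:Hs_multiplication}). For \textbf{parts (a)–(c):} since $df_n\to df$ uniformly on $\bW$ and $\g=df^\top df$ is continuous on $\bW$ with eigenvalues in a compact subinterval $[c,C]\subset(0,\infty)$, the metrics $\g_n=df_n^\top df_n$ have eigenvalues in $[c/2,2C]$ for $n$ large, which is (a). For (b), note $df_n\to df$ both in $\Hone$ (from $\Htwo$-convergence) and in $C^0$; a product of sequences converging in $\Hone\cap C^0(\bW)$ again converges in $\Hone\cap C^0$ (the gradient is handled by the Leibniz rule, each resulting term being the product of an $L^2$-convergent factor and a uniformly convergent one), so $\g_n\to\g$ in $\Hone\cap C^0$. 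Matrix inversion is a $C^\infty$ map with derivatives bounded on the set of matrices with eigenvalues in $[c/2,2C]$, so composing with it (using $\nabla\g_n^{-1}=-\g_n^{-1}(\nabla\g_n)\g_n^{-1}$, cf.\ \lemref{lem:Hs_composition1}) gives $\g_n^{-1}\to\g^{-1}$ in $\Hone\cap C^0$. For (c), $\n_n=(\partial_1 f_n\times\partial_2 f_n)/\Abs{\partial_1 f_n\times\partial_2 f_n}$ is a $C^\infty$ function of $df_n$ with derivatives bounded in the uniformly-elliptic regime, so the same reasoning yields $\n_n\to\n$ in $\Hone\cap C^0$.

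\textbf{Part (d).} By Gauss' Theorema Egregium, $K\,\VolG=\n^*\VolSph$, which pointwise reads $K\sqrt{\det\g}=\ip{\n,\partial_1\n\times\partial_2\n}$, and likewise for $f_n$. By (c), $\partial_i\n_n\to\partial_i\n$ in $L^2$, hence $\partial_1\n_n\times\partial_2\n_n\to\partial_1\n\times\partial_2\n$ in $L^1$; multiplying by $\n_n\to\n$ in $C^0$ and dividing by $\sqrt{\det\g_n}\to\sqrt{\det\g}$ in $C^0$ (bounded away from $0$ for $n$ large) gives $K_n\to K$ in $L^1(\W)$, and in particular $K\in L^1(\W)$. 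For the curvatures in the holes, fix once and for all, for each $i$, a smooth simple closed curve $\tGamma_i\subset\W$ homotopic to $\Gamma_i$, so that $K_i$ and $K_{n,i}$ are both given by \eqref{eq:K_i_flexible} with the corresponding metric. The annulus integral over $\tW$ converges by the $L^1$-convergence of $K_n$ and the $C^0$-convergence of the volume densities. The remaining boundary term is the total geodesic curvature of $\tGamma_i$, which by \eqref{eq:total_kg} is the pairing $\frakt'[\frakn]$, continuous on $H^{-1/2}(\Circ)\times\Hhalf(\Circ)$; and the trihedra associated with $f_n|_{\tGamma_i}$ converge to that of $f|_{\tGamma_i}$ in $\Hhalf(\Circ;\SO(3))$ because, by the trace theorem on the interior curve $\tGamma_i$, $f_n|_{\tGamma_i}\to f|_{\tGamma_i}$ in $\HthreehalvesImm(\Circ;\R^3)$ and $\n_n|_{\tGamma_i}\to\n|_{\tGamma_i}$ in $\Hhalf(\Circ;\Sph)$, whence \lemref{lem:H12frames_well_defined} applies. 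Therefore $K_{n,i}\to K_i$.

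\textbf{Part (e).} By \lemref{lem:extendability_homotopy}, extendability of any of these immersions need only be checked on a fixed family of $C^{1,1}$ generators $\Gamma_1,\dots,\Gamma_m$ of $\pi_1(\W)$ (e.g.\ smooth loops encircling the holes, which we may take in the interior of $\W$). For each $k$, the trace theorem and (a) give $(f_n,\n_n)|_{\Gamma_k}\to(f,\n)|_{\Gamma_k}$ in $\HthreehalvesImm(\Circ;\R^3)\times\Hhalf(\Circ;\Sph)$, with all these loops lying in a common set $\Hthreehalves_{\Imm,c,C}(\Circ;\R^3)$, so by \lemref{lem:H12frames_well_defined} the associated trihedra converge in $\Hhalf(\Circ;\SO(3))$. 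Since $\Hhalf(\Circ;\SO(3))$ has exactly two connected components, which are relatively open (\propref{prop:connected_framed_loops}), and the limiting trihedron lies in the nontrivial one ($f$ being extendable), the same holds for $(f_n,\n_n)|_{\Gamma_k}$ once $n$ is large; taking $n$ large enough to handle all $k=1,\dots,m$ makes $f_n$ extendable.

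\textbf{Main obstacle.} The crux is part (d): one must pass to the limit in $K$, which for $\Htwo$-immersions is only an $L^1$ object, and in the ``hidden'' curvatures $K_{n,i}$, which forces going through traces and the $\Hhalf$-continuity of the total geodesic curvature; parts (a)–(c) and (e) are then routine given the appendix estimates and \lemref{lem:H12frames_well_defined}.
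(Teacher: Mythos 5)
Your proof is correct and follows essentially the same route as the paper's: parts (a)--(c) via the algebra property of $\Hone\cap C^0$ in two dimensions, (d) via the Gauss equation expressing $K$ as the Jacobian determinant of $\n$, and (e) via reduction to generators of $\pi_1(\W)$ and openness of the connected components. The one genuinely different sub-step is the convergence $K_{n,i}\to K_i$: the paper uses Fubini to select a curve $\tGamma_i$ along which the \emph{restrictions} of $f_n$ converge in $\Htwo(\tGamma_i)\cap C^1(\tGamma_i)$, so that the pointwise geodesic curvatures converge in $L^2(\tGamma_i)$; you instead take any fixed smooth $\tGamma_i$, use the trace theorem to get $\Hthreehalves\times\Hhalf$ convergence of the framed loops, and pass to the limit in the \emph{total} geodesic curvature via the distributional pairing $\frakt'[\frakn]$ of \eqref{eq:total_kg} together with \lemref{lem:H12frames_well_defined}. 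Your variant avoids the Fubini selection at the cost of working with the weaker (but sufficient) distributional formulation; both are valid, and the rest of your argument matches the paper's.
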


Note that the $L^p$ convergence on tensor fields is metric dependent, both via the pointwise norms and via the volume forms. There is however no ambiguity as to whether we use the metrics $\g_n$ of $\g$, due to the uniform convergence of $\g_n$ and its inverse $\g_n^{-1}$ guaranteed by Item (b).

\begin{proof}
By assumption $df_n\to df$ in $\Hone(\W;\R^3)$ and in $C^0(\bW;\R^3)$.
\begin{enumerate}[itemsep=0pt,label=(\alph*)]
\item[(b)]
Since in two dimensions, the space $\Hone(\W)\cap C^0(\bW)$ is an algebra (closed under pointwise multiplication, and the multiplication is continuous), we obtain that $\g_n\to \g$ in $\Hone(\W;\R^2\otimes\R^2)$ and in $C^0(\bW;\R^2\otimes\R^2)$.
It follows that $\g_n^{-1}$ is bounded in $C^0$ for $n$ large enough and that $\g_n^{-1} \to \g^{-1}$ uniformly.  Since, in coordinates, $D\g^{-1} = -\g^{-1} (D\g) \g^{-1}$, and likewise for $\g_n$, it follows that $\g_n^{-1} \to \g^{-1}$ also in $\Hone(\W;\R^2\otimes\R^2)$. 

\item[(a)]
The properties of $\g_n$ and $\g_n^{-1}$ imply that $f_n$ are immersions in the sense of Definition~\ref{def:immersions} for $n$ large enough.

\item[(c)]
The coordinate expression for the normal $\n = \det(\g^{-1})\, \pl_1 f \times \pl_2 f$ is a product of functions in $\Hone(\W;\R^3)\cap C^0(\bW;\R^3)$ (and likewise for $\n_n$), hence $\n_n\to \n$ in $\Hone(\W;\R^3)$ and in $C^0(\bW;\R^3)$ by the same argument as for $\g_n$.

\item[(d)]
Since the Gauss equation holds for immersions in $\HtwoImm(\W;\R^3)\cap C^1(\bW;\R^3)$, i.e., the Gaussian curvature is the Jacobian determinant of the Gauss map (see, e.g., \cite[Proposition~5.2]{MM25}\footnote{The proof of \cite[Proposition~5.2]{MM25} assumes that the metric is smooth, yet it is sufficient to assume that $\g,\g^{-1} \in \Hone\cap L^\infty$, which follows from $f\in \HtwoImm(\W;\R^3)$, as this implies that the Christoffel symbols are in $L^2$, being products of $D\g$ and $\g^{-1}$.}), we obtain that $K_n\to K$ in $L^1(\W)$, as products of the $L^2$-converging derivatives of $\n_n$.
It remains to prove the convergence of the curvature enclosed in the holes.
Let $\tW_i \subset \W$ be an annulus bounded by $\Gamma_i$ and a smooth curve $\tGamma_i$.
Without loss of generality, by Fubini's theorem, we can choose $\tW_i$ such that $f_n\to f$ in $\Htwo(\tGamma_i;\R^3)$ and in $C^1(\tGamma_i;\R^3)$. 
This implies that the geodesic curvature of $\tGamma_i$ with respect to $\g_n$ converges in $L^2(\tGamma_i)$ to the geodesic curvature with respect to $\g$.
The convergence $K_{n,i}\to K_i$ follows from \eqref{eq:K_i_flexible} and the $L^1(\W)$ convergence $K_n\to K$.

\item[(e)]
The extendability of the frames at the boundary follows from the corresponding assumption on $f$, the $C^1(\bW;\R^3)$ convergence of $f_n\to f$ and the $C^0(\bW;\Sph)$ convergence $\n_n \to \n$.
Since the restrictions of $f_n$ and $\n_n$ to all the boundary components are extendable for $n$ large enough, then $f_n$ is extendable by 
the comment following \defref{def:W22_extendable}.
\end{enumerate}
\end{proof}

Equipped with \lemref{lem:immersion_convergence}, we prove \thmref{thm:main_theorem} for an extendable immersion $f\in \HtwoImm(\W;\R^3)\cap C^1(\bW;\R^3)$ by approximation: 
Let $f_n\in C^\infty(\bW;\R^3)$ converge to $f$ in $\Htwo(\W;\R^3)$ and in $C^1(\bW;\R^3)$.
Applying the smooth version of \thmref{thm:main_theorem} to $f_n$ (which is possible by clauses (a) and (e) in \lemref{lem:immersion_convergence}), we obtain
\[
\|d\n_n\|_{L^2(\W,\g_n)}^2 \ge \frac{4\pi - \|K_n\|_{L^1_\text{MC}(\W,\g_n)}}{\|K_n\|_{L^1_\text{MC}(\W,\g_n)}} \|K_n\|_{\Hminusone_\text{MC}(\W,\g_n)}^2.
\]
Assume that $\|K\|_{\LoneMC}<4\pi$, otherwise \thmref{thm:main_theorem} holds trivially.
Taking the limit $n\to \infty$ and using clauses (b), (c) and (d) in \lemref{lem:immersion_convergence}, we obtain that
\beq
\|d\n\|_{L^2(\W,\g)}^2 \ge \frac{4\pi - \|K\|_{\LoneMC}}{\|K\|_{\LoneMC}}\limsup_{n\to \infty} \|K_n\|_{\Hminusone_\text{MC}(\W,\g_n)}^2,
\label{eq:irregular_ineq1}
\eeq
where the need to take a superior limit of $\|K_n\|_{\Hminusone_\text{MC}(\W,\g_n)}$ as we do not a-priori know that it converges.
Since $\n\in \Hone(\W;\Sph)$, the left-hand side is finite, hence $\limsupn \|K_n\|_{\Hminusone_\text{MC}(\W,\g_n)}< \infty$.
By \lemref{lem:immersion_convergence}, $\g_n\to\g$ uniformly, and $K_{n,i}\to K_i$, and thus $K_n$ are uniformly bounded in $\Hminusone_\text{MC}$ with respect to a fixed metric $\g$ and fixed values $K_i$.
It follows that $K_n$ weakly converge in this space, and since $K_n \to K$ in $L^1$, it follows that $K_n \rightharpoonup K$ in $\HMoneMC$.
We further have that for every $\vp\in \HoneMC$,  
\[
\frac{\int_\W K \vp\,\VolG + \sum_i K_i \vp|_{\Gamma_i}}{\|\vp\|_{\dotHnorm}} = \limn
\frac{\int_\W K_n \vp\,\VolGn + \sum_i K_{n,i} \vp|_{\Gamma_i}}{\|\vp\|_{\dotHnormn}}
\le \liminfn \|K_n\|_{\Hminusone_\text{MC}(\W,\g_n)},
\]
from which we obtain that
\[
\|K\|_{\HMoneMC} \le \liminf_{n\to \infty} \|K_n\|_{\Hminusone_\text{MC}(\W,\g_n)},
\]
which together with \eqref{eq:irregular_ineq1} completes the proof.

\paragraph{Case 2: $f\in \HtwoImm(\W;\R^3)$ and $\g\in W^{2,p}(\W)$ for $p>1$}

Here, we need the following lemma:

\begin{lemma}\label{lem:approx_metrics}
Let $f\in \HtwoImm(\W;\R^3)$  be such that $\g\in W^{2,p}(\W)$ for $p\ge 1$, and let $\g_n\in C^\infty(\bW)$ be a sequence of metrics converging in $W^{2,p}$ to $\g$. 
Denote the Gaussian curvatures of $\g$ and $\g_n$ by $K$ and $K_n$, respectively.
Then, the following holds:
\begin{enumerate}[itemsep=0pt,label=(\alph*)]
\item $\g_n^{-1} \to \g^{-1}$ in $W^{2,p}(\W)$; in particular, $\g^{-1}\in W^{2,p}(\W)$.
\item $K_n \to K$ in $L^p(\W)$.
\end{enumerate}
Furthermore, for any smooth curve $\Gamma\subset \W$, the geodesic curvatures $\kappa_{\g_n}$ with respect to $\g_n$ converge to $\kappa_\g$ in $L^p(\Gamma)$. 
\end{lemma}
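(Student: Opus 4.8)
\emph{Proof plan.} I would reduce every assertion to elementary stability properties of the spaces $W^{m,p}(\W)$ for $\W\subset\R^2$ bounded and Lipschitz. The two ingredients used repeatedly are the two-dimensional Sobolev embeddings
\[
W^{2,p}(\W)\hookrightarrow C^0(\bW)
\Textand
W^{1,p}(\W)\hookrightarrow L^{2p}(\W),
\qquad p\ge 1,
\]
both of which persist in the borderline case $p=1$ since $W^{n,1}(\R^n)\hookrightarrow C^0$ and $W^{1,1}(\R^2)\hookrightarrow L^2$. From these one checks directly that $W^{2,p}(\W)$ is a Banach algebra under pointwise multiplication: expanding $\partial^2(uv)=u\,\partial^2v+2\,\partial u\,\partial v+v\,\partial^2 u$, the mixed term is controlled by $\Norm{\partial u}_{L^{2p}}\Norm{\partial v}_{L^{2p}}$ and the outer terms by $\Norm{u}_{C^0}\Norm{\partial^2 v}_{L^p}$. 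Together with the chain rule, this also shows that composition $h\mapsto\Phi\circ h$ with a fixed smooth $\Phi$ is well defined and continuous on any family of maps $h\in W^{2,p}(\W;\R^N)$ whose images stay in a fixed compact set on which $\Phi$ is smooth.

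For (a) I would write $\g_n^{-1}=\Phi(\g_n)$ with $\Phi$ the inversion map on symmetric $2\times2$ matrices. Since $\g_n\to\g$ in $W^{2,p}\hookrightarrow C^0$, the $\g_n$ converge uniformly, so for $n$ large they --- and $\g$ itself --- take values in a fixed compact set of positive-definite matrices (a uniform upper bound and a uniform lower bound on $\det$), on which $\Phi$ is smooth; the composition statement above then yields $\g^{-1}\in W^{2,p}(\W)$ and $\g_n^{-1}\to\g^{-1}$ in $W^{2,p}(\W)$. Equivalently, one may use $\g^{-1}=(\det\g)^{-1}\,\Cof\g=(\det\g^{-1})\,\Cof\g$ together with the algebra property and the smoothness of $t\mapsto t^{-1}$ away from $0$.

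For (b) I would invoke the fact that the Gaussian curvature is a universal algebraic expression in the metric,
\[
K=\mathcal A(\g^{-1})\,D^2\g+\mathcal B(\g^{-1})\,D\g\otimes D\g,
\]
linear in the second derivatives of $\g$, quadratic in its first derivatives, with coefficients polynomial in the entries of $\g^{-1}$ (this follows from $\Gamma\sim\g^{-1}D\g$, $R\sim\partial\Gamma+\Gamma\ast\Gamma$ and $K=R_{1212}\det\g^{-1}$). By (a) $\g_n^{-1}\to\g^{-1}$ uniformly, so $\mathcal A(\g_n^{-1})\to\mathcal A(\g^{-1})$ and $\mathcal B(\g_n^{-1})\to\mathcal B(\g^{-1})$ uniformly; moreover $D^2\g_n\to D^2\g$ in $L^p(\W)$ and $D\g_n\to D\g$ in $L^{2p}(\W)$, whence $D\g_n\otimes D\g_n\to D\g\otimes D\g$ in $L^p(\W)$ by Hölder. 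Multiplying these out gives $K_n\to K$ in $L^p(\W)$.

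For the geodesic curvatures, I would fix a smooth parametrization $c$ of $\Gamma$ and use that $\kappa_\g$ is affine in the first derivatives of the metric along $\Gamma$,
\[
\kappa_\g=\mathcal C(\g|_\Gamma)+\mathcal D(\g|_\Gamma)\,(D\g)|_\Gamma,
\]
with $\mathcal C,\mathcal D$ rational in $\g|_\Gamma$ (and depending on the fixed smooth data $\dot c,\ddot c$), the dependence on $D\g$ entering only through the Christoffel symbols. By the trace theorem $W^{2,p}(\W)\to W^{2-1/p,p}(\Gamma)\hookrightarrow C^0(\Gamma)$ and $W^{1,p}(\W)\to W^{1-1/p,p}(\Gamma)\hookrightarrow L^p(\Gamma)$ (again valid down to $p=1$), one gets $\g_n|_\Gamma\to\g|_\Gamma$ uniformly and $(D\g_n)|_\Gamma\to(D\g)|_\Gamma$ in $L^p(\Gamma)$, hence $\kappa_{\g_n}\to\kappa_\g$ in $L^p(\Gamma)$. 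The only genuine obstacle I foresee is bookkeeping: one must isolate the precise algebraic structure of $K$ (linear in $D^2\g$, quadratic in $D\g$) and of $\kappa_\g$ (affine in $D\g$), with all coefficients rational in $\g$, so that no product of two genuinely rough factors appears that cannot be absorbed by $L^{2p}\cdot L^{2p}\subset L^p$, and one must verify that all the two-dimensional Sobolev and trace embeddings survive in the borderline case $p=1$ contemplated in the statement.
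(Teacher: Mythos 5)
Your proposal is correct and follows essentially the same route as the paper's proof: the two-dimensional embeddings $W^{2,p}\hookrightarrow C^0$ and $W^{1,p}\hookrightarrow L^{2p}$ (valid down to $p=1$), the resulting algebra property, the explicit algebraic structure of $\g^{-1}$, $\Gamma\sim\g^{-1}D\g$, $K\sim D\Gamma+\Gamma\ast\Gamma$, and the trace theorem for the geodesic curvature. The only point worth making explicit is that the uniform lower bound on the eigenvalues of $\g$ (needed to place the metrics in a fixed compact set of positive-definite matrices) comes from Definition~\ref{def:immersions}, which the paper invokes and you use implicitly.
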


\begin{proof}
We prove the result for $p=1$. The general case is similar (and easier).
Since $W^{2,1}(\W) \inj C^0(\bW)$,
we obtain that $\g_n\to \g$ uniformly.
By the Definition~\ref{def:immersions} of an immersion and the uniform convergence, the eigenvalues of $\g_n$ are bounded away from zero, hence $\g_n^{-1}\to \g^{-1}$ uniformly as well. 
Combining the formulas 
\[
\begin{gathered}
D(\g^{-1}) = -\g^{-1} (D\g) \g^{-1} \\
D^2 (\g^{-1}) =  -\g^{-1} (D^2\g) \g^{-1} + 2 \g^{-1} (D\g) \g^{-1} (D\g) \g^{-1} 
\end{gathered}
\]
with the embedding $W^{2,1}(\W)\inj \Hone(\W) \cap C^0(\bW)$,
it follows that $\g_n^{-1}\to \g^{-1}$  in $W^{2,1}$.

Denote by $\Gamma$ and $\Gamma_n$ the Christoffel symbols of the second kind of $\g$ and  $\g_n$.
Since the entries of $\Gamma$ are product of components of $\g^{-1}$ and $D\g$,
it follows that $\Gamma_n\to \Gamma$ in $W^{1,1}$, using again the embedding $W^{2,1}(\W)\inj \Hone(\W)\cap C^0(\bW)$.
Finally, since 
$K \sim D\Gamma + \Gamma*\Gamma$ (i.e., $K$ is a sum of derivatives of $\Gamma$ and of products of $\Gamma$ with itself),  it follows that $K_n\to K$ in $L^1(\W)$.  

Finally, let $\Gamma\subset \W$ be a smooth curve, parametrized by $\gamma:[0,1]\to \W$.
By the trace theorem, $\g_n\to \g$ in $W^{1,1}(\Gamma)$, hence uniformly, and thus $|\gamma'|_{\g_n} \to |\gamma'|_{\g}$ in $W^{1,1}(0,1)$, and the normals to $\Gamma$ with respect to $\g_n$ converge to the normal with respect to $\g$ uniformly.
Thus $\kappa_{\g_n}\to \kappa_{\g}$ in $L^1$. 
\end{proof}

\begin{lemma}\label{lem:elliptic_regularity}
Let $f$, $\g$ and $\g_n$ by as in \lemref{lem:approx_metrics} for $p>1$.
Let $u_n$ be solutions of equation \eqref{eq:ELPDE} with respect to the metrics $\g_n$, curvatures $K_n$, and enclosed curvatures $K_i$ (independent of $n$).
Then $u_n$ are uniformly bounded in $L^\infty(\W)$.
\end{lemma}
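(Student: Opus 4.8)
\emph{Proof plan.} The plan is to rewrite \eqref{eq:ELPDE} in the Euclidean chart as a divergence-form, uniformly elliptic equation with an $L^p$ source, $p>1$, to extract a uniform energy bound together with a uniform bound on the a priori unknown constant boundary values $c_{n,i}:=u_n|_{\Gamma_i}$, and then to close an $L^\infty$ bound by a De Giorgi--Stampacchia truncation whose constant depends only on the ellipticity constant, $|\W|$, $p$ and $\sup_n\|K_n\|_{L^p(\W)}$.

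First I would record the consequences of \lemref{lem:approx_metrics}. Since $p>1$, $W^{2,p}(\W)\inj C^0(\bW)$, hence $\g_n\to\g$ uniformly; together with \defref{def:immersions} this gives, for $n$ large, a fixed $\Lambda>1$ such that $\Lambda^{-1}\id\le\g_n\le\Lambda\id$ and $\Lambda^{-1}\le\det\g_n\le\Lambda$ pointwise on $\W$. Writing $\Delta_{\g_n}$ in coordinates, \eqref{eq:ELPDE} reads $-\div(a_n\nabla u_n)=\tilde K_n$ in $\W$, where $a_n:=\sqrt{\det\g_n}\,\g_n^{-1}$ is uniformly elliptic for $n$ large and $\tilde K_n:=\sqrt{\det\g_n}\,K_n$ obeys $\sup_n\|\tilde K_n\|_{L^p(\W)}<\infty$ by \lemref{lem:approx_metrics}(b); the boundary conditions are unchanged, namely $u_n=0$ on $\Gamma_0$, $u_n\equiv c_{n,i}$ on $\Gamma_i$, and the flux conditions $\int_{\Gamma_i}\partial_\nu u_n=K_i$ hold (normal and length element taken with respect to $\g_n$).

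Next, a uniform energy bound: testing the weak formulation of \eqref{eq:ELPDE} against $u_n$ itself gives
\[
\|u_n\|_{\dotHnormn}^2=\int_\W K_n u_n\,\VolGn+\sum_i K_i\, u_n|_{\Gamma_i}\le\|u_n\|_{L^{p'}(\W,\g_n)}\|K_n\|_{L^p(\W,\g_n)}+\sum_i|K_i|\,\Abs{u_n|_{\Gamma_i}}.
\]
Because $u_n=0$ on $\Gamma_0$, the two-dimensional Sobolev embedding $\Hone\inj L^{p'}$, a trace estimate and Poincar\'e's inequality---all with constants uniform in $n$ thanks to the uniform two-sided bound on $\g_n$---yield $\|u_n\|_{L^{p'}(\W,\g_n)}+\max_i\Abs{u_n|_{\Gamma_i}}\le C\|u_n\|_{\dotHnormn}$. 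Hence $\|u_n\|_{\dotHnormn}\le C\bigl(\sup_n\|K_n\|_{L^p}+\sum_i|K_i|\bigr)$ is bounded uniformly in $n$, and so is $\|u_n\|_{\Hone(\W)}$ (once more by Poincar\'e). By the trace theorem $\|u_n\|_{L^2(\Gamma_i)}\le C$, and since $u_n\equiv c_{n,i}$ on $\Gamma_i$ this forces $\sup_n\max_i|c_{n,i}|<\infty$.

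Finally, set $M_n:=\max_i|c_{n,i}|$. For $k\ge M_n$ the truncation $(u_n-k)_+$ vanishes on $\partial\W$, so it lies in $\Hone_0(\W)$; testing $-\div(a_n\nabla u_n)=\tilde K_n$ against it, and using uniform ellipticity, the two-dimensional Sobolev inequality in $\Hone_0(\W)$ and H\"older's inequality over the level set $\{u_n>k\}$, produces the standard De Giorgi--Stampacchia super-linear decay of $k\mapsto|\{u_n>k\}|$ (this is where $p>1$ is used). This yields $u_n\le M_n+C$ a.e.\ in $\W$ with $C=C(\Lambda,|\W|,p,\sup_n\|\tilde K_n\|_{L^p})$, and applying the same argument to $-u_n$ bounds $u_n$ from below; combined with the uniform bound on $M_n$ this gives $\sup_n\|u_n\|_{L^\infty(\W)}<\infty$. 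I expect the only delicate point to be the ``floating'' boundary condition on the curves $\Gamma_i$: the truncation argument needs the boundary values of $u_n$ to be controlled a priori, which is exactly why the energy estimate and the ensuing trace bound on the constants $c_{n,i}$ must be established first; once they are, no regularity of $\partial\W$ beyond bounded Lipschitz is required.
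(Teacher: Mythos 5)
Your proof is correct, but it takes a genuinely different route from the paper's. The paper reduces the lemma to the $W^{2,p}$ estimate of Appendix~B: it bounds the constants $c_i$ via the dual-norm relation and a trace inequality, splits $u=u_0^c+u_K^0$, invokes a Calder\'on--Zygmund estimate for the non-divergence-form operator (which needs uniform continuity of $\g_n^{-1}$ and $L^q$ control of the Christoffel terms, both supplied by the uniform $W^{2,p}$ bounds of \lemref{lem:approx_metrics}), and concludes via the two-dimensional embedding $W^{2,p}\inj L^\infty$ for $p>1$. You instead keep the equation in divergence form, where only the uniform two-sided bound $\Lambda^{-1}\id\le\g_n\le\Lambda\id$ is needed, obtain the uniform bound on the floating constants $c_{n,i}$ from the same energy-plus-trace argument as the paper's Lemmas~B.2--B.3, and then run De Giorgi--Stampacchia truncation; the key observation that $(u_n-k)_+\in H^1_0(\W)\subset\HoneMC$ for $k\ge\max_i|c_{n,i}|$, so that the flux terms drop out of the weak formulation, is exactly what makes the floating boundary condition harmless, and the exponent condition $p>d/2=1$ is what closes the level-set iteration. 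Your argument is more elementary and demands less of the coefficients (merely measurable and uniformly elliptic, versus uniformly continuous with $L^q$ lower-order terms); the paper's approach buys the stronger conclusion $\|\nabla^2 u_n\|_{L^p}\lesssim\Kbnd$, but only the $L^\infty$ bound is used downstream in the proof of \thmref{thm:main_theorem}, so your route would suffice there.
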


\begin{proof}
Without loss of generality, we can assume that $p\in (1,2)$. 
As proved in Appendix~\ref{sect:AppB},
\[
\|u_n\|_{L^\infty(\W)} \le C \left( \left\| K_n \right\|_{L^p(\W,\g_n)} + \sum_{i=1}^m |K_i| \right),
\]
with a constant $C$ depending only on the $W^{2,p}$ norms of $\g_n$ and its inverse. By \lemref{lem:approx_metrics}, the right-hand side is uniformly bounded.
\end{proof}

We can now complete the proof of \thmref{thm:main_theorem}:
Let $f\in \HtwoImm(\W;\R^3)$ such that $\g\in W^{2,p}(\W)$ for $p>1$, and let $\g_n$ as in \lemref{lem:approx_metrics}.
Assume, without loss of generality that $\|\g_n-\g\|_{W^{2,p}}< 1/n$.
Since $\g\in W^{2,p}(\W)$ is a $C^0$ metric up to the boundary, we have that $\n\in \Hone(\W;\Sph)$ (by the same argument as in the proof of \lemref{lem:immersion_convergence}).
For a fixed $n$, let $\{\Glami\}_{\lambda \in \R; i\le I(\lambda)}$ be the level sets of the function $u_n$ as defined in \lemref{lem:elliptic_regularity}.

Following essentially the same steps as in the smooth case,
\[
\begin{aligned}
\|K_n\|_{L^1(\W,\g_n)}\, \||d\n|_\g\|_{L^2(\W,\g_n)}^2 &= \|K_n\|_{L^1(\W,\g_n)} \int_{\R} \brk{\sum_{j=1}^{I(\lambda)} \int_{\Glami} \frac{|d\n|_\g^2}{|du_n|_{\g_n}} \,d\Vol_{\iota^*\g_n}  }\, d\lambda \\
&\ge 
\int_{\R} \sum_{j=1}^{I(\lambda)} \brk{  \int_{\Glami} |du_n|_{\g_n} \,d\Vol_{\iota^*\g_n}   } \brk{  \int_{\Glami} \frac{|d\n|_\g^2}{|du_n|_{\g_n}} \,d\Vol_{\iota^*\g_n}   }\, d\lambda \\
&\ge  \int_{\R} \brk{ \sum_{j=1}^{I(\lambda)} \int_{\Glami} |d\n|_\g \,d\Vol_{\iota^*\g_n}  
 }  ^2\, d\lambda \\
 &\ge \beta_n\int_{\R} \brk{ \sum_{j=1}^{I(\lambda)} \int_{\Glami} |d\n|_\g \,\iVolG  
 }  ^2\, d\lambda \\
 &\overset{(*)}{\ge}  \beta_n\int_{\R} \sum_{j=1}^{I(\lambda)} \brk{\brk{4\pi -  \Abs{ \int_{\Wlami} K\,\VolG + \sum_{\Gamma_i\subset \partial\Wlami} K_i}} \Abs{ \int_{\Wlami} K\,\VolG + \sum_{\Gamma_i\subset \partial\Wlami} K_i} \, d\lambda} \\
 &\ge  \beta_n \brk{4\pi - \|K\|_{L^1(\W,\g)}} \int_{\R}  \sum_{j=1}^{I(\lambda)} \Abs{ \int_{\Wlami} K\,\VolG + \sum_{\Gamma_i\subset \partial\Wlami} K_i} \, d\lambda \\
 &\overset{(**)}{\ge} \beta_n \brk{4\pi - \|K\|_{L^1(\W,\g)}} \int_{\R}  \sum_{j=1}^{I(\lambda)} \Abs{ \int_{\Wlami} K_n\,d\Vol_{\g_n} + \sum_{\Gamma_i\subset \partial\Wlami} K_i} \, d\lambda + \alpha_n\\
&\ge \beta_n  \brk{4\pi - \|K\|_{L^1(\W,\g)}}  \int_{\R}  \brk{\sum_{j=1}^{I(\lambda)} \int_{\Glami} |du_n|_{\g_n}\,d\Vol_{\g_n}} \, d\lambda + \alpha_n\\
&= \beta_n\brk{4\pi - \|K\|_{L^1(\W,\g)}} \|du_n\|_{L^2(\W,\g_n)}^2 + \alpha_n \\
&= \beta_n\brk{4\pi - \|K\|_{L^1(\W,\g)}} \|K_n\|_{\Hminusone_{MC}(\W,\g_n)}^2 +\alpha_n, \\
\end{aligned}
\]
where $\alpha_n = O(n^{-1})$ and $\beta_n = 1-O(n^{-1})$.
Here, $ \|K_n\|_{\Hminusone_{MC}(\g_n)}$ refers to the $\Hminusone$-norm with respect to the $n$-independent enclosed curvatures $K_i$.
The arguments for the different passages are similar to the ones in the proof of the smooth case, with the following differences:
In the passage (*), we use that (a) for almost every $\lambda$, $\Glami$ are smooth closed curves (this was a primary reason for smoothing $\g$), and the restrictions of $f$ and $\n$ to $\Glami$ are in $\Htwo$ and $\Hone$, respectively, and (b) that the Gauss--Bonnet theorem holds for an immersion $f$ of this regularity (as follows, e.g., by approximation, using the convergence of $K_n$ and $\kappa_{\g_n}$ in \lemref{lem:approx_metrics}). 
In (**), we use the uniform boundedness of range of $u_n$, \lemref{lem:elliptic_regularity}, and the assumption on the rate of decay of $\|\g_n - \g\|_{W^{2,p}}$, which implies that $\|K_n - K\|_{L^1(\W,\g)} = O(1/n)$.

Taking $n\to \infty$ and using \lemref{lem:approx_metrics}, we obtain
\[
\|d\n\|_{L^2(\W,\g)} \ge \frac{4\pi - \|K\|_{L^1(\W,\g)}}{\|K\|_{L^1(\W,\g)}} \limsup_{n\to \infty} \|K_n\|_{\Hminusone_{MC}(\W,\g_n)}.
\]
The completion of the proof  follows the same argument as in the first case. 

\section{Burgers circuit estimate: Proof of \thmref{thm:burgers}}\label{sec:burgers}

Like for \thmref{thm:GB+iso}, it suffices to prove the assertion for smooth smooth framed loops, obtaining the Sobolev version via approximation (hinging on \lemref{lem:H32_W11_frames}).
By the third line in \eqref{eq:burgers_low_reg},
\[
\begin{aligned}
|B_{\gamma,\n,\vp_0}| &\le \|\calL\|_{L^\infty(\Circ)} \int_{\Circ}  |\n'(\vp)| \, d\vp 
\le \Len(\gamma(\Circ)) \, \int_{\Circ} |D_s\n|\, d\ell,
\end{aligned}
\]
where in the last passage we used the definitions of $D_s\n$ and $d\ell$, and the fact that for every $\vp'\in\Circ$, since parallel transport is norm-preserving,
\[
|\calL(\vp')| \le \int_{\Circ} |\Pi^{\gamma,\n}_{\vp,\vp'} \gamma'(\vp)|\, d\vp =  \int_{\Circ} |\gamma'(\vp)|\, d\vp  = \Len(\gamma(\Circ)).
\]

\section{Examples}\label{sec:examples}

\subsection{Cones: proof of \thmref{thm:cone}}\label{sec:cone}

A cone is a locally-flat surface whose curvature is concentrated at a single point---the apex. 
The curvature may be positive or negative; cones having positive curvature can be embedded in $\R^3$ as surfaces of revolution, unlike cones having negative curvature, which are known in the physics literature as \Emph{E-cones} (the E stands for \emph{excess}) \cite{MAG08}. 
 
Cones of either type can be parametrized using polar coordinates,
\[
(r,\vp) \in (0,\infty)\times\Circ,
\]
along with a metric of the form
\[
\g = dr\otimes dr + (\ca r)^2\, d\vp\otimes d\vp,
\]
where 
\[
\ca = 1 - \frac{\alpha}{2\pi}.
\]
The parameter $\alpha$ is the angular deficit/excess of the cone; it is in the range $(0,2\pi)$ for positively-curved cones, and negative for E-cones. 
A direct calculation via the Christoffel symbols shows that, as expected, $K=0$. On the other hand, let $\Gamma$ be any generator of the fundamental group, then 
\[
\int_{\Gamma} \kg\,\iVolG = 2\pi- \alpha,
\]
which shows that the curvature is $\delta$-measured at the origin, with intensity $\alpha$. 

We truncate the cone into a frustum by letting $r\in (r_0,R)$.
Thus, we obtain a locally-Euclidean annulus, $\W$, endowed with a locally-Euclidean metric $\g$. 
As before, we denote the outer boundary by $\Gamma_0$ and the inner boundary by $\Gamma_1$.
The curvature enclosed in the hole is $K_1 = \alpha$.

We proceed to solve the elliptic boundary-value problem \eqref{eq:ELPDE} for this geometry. 
Since the geometry is radially-symmetric, so is the solution $u_K$. 
The Laplace-Beltrami equation turns out not to depend on $\alpha$, and reads
\[
\frac{1}{r}\pd{}{r} \brk{r\, \pd{u_K}{r}} = 0,
\]
with boundary conditions
\[
u_K|_{\Gamma_0} = 0
\qquad
u_K|_{\Gamma_1} = \text{const}
\Textand
2\pi \ca r_0  \left.\pd{u_K}{r}\right|_{\Gamma_1} = - \alpha.
\]
The solution can be written down explicitly,
\[
u_K = -\frac{\alpha}{2\pi\ca} \log\frac{r}{R}.
\]
The relevant norms of $K$ are 
\[
\|K\|_{\LoneMC} = |\alpha|
\]
and 
\[
\|K\|_{\HMoneMC}^2 =  \|u_K\|_{\dotHnorm}^2 = 
\int_0^{2\pi}  \int_{r_0}^R  \brk{\frac{\alpha}{2\pi\ca r}}^2 \ca r\, dr d\vp 
= \frac{\alpha^2}{2\pi\ca} \log\frac{R}{r_0}.
\]
We can now apply \thmref{thm:main_theorem} to any extendable isometric immersion of $(\W,\g)$ and complete the proof of the first part of \thmref{thm:cone}.

We proceed to prove the second part of \thmref{thm:cone}.
Consider the loop $\gamma:[0,2\pi]\to\W$, given in coordinates by
\[
\gamma(t) = (r,t),
\]
for some $r\in(r_0,R)$.
Let its base point be $(r,0)$.
Using that 
\[
\begin{aligned}
e_1 &= \cos(\ca \vp) \, \partial_r + \sin(\ca \vp) \, (\ca r)^{-1}\partial_\vp \\
e_2 &= -\sin(\ca \vp) \, \partial_r + \cos(\ca \vp) \, (\ca r)^{-1}\partial_\vp
\end{aligned}
\]
are parallel orthonormal fields in $\{\vp\ne 0\}$,
a direct calculation shows that 
\[
\Pi^{\gamma,\n}_{t,0} \partial_\vp|_{(r,t)} = (\ca r) \sin(\ca t) \, \partial_r|_{(r,0)} + 
\cos(\ca t) \,  \partial_\vp|_{(r,0)}.
\]
The corresponding Burgers vector is given by
\[
\begin{aligned}
B_{\gamma,\n,0} &= \int_0^{2\pi} \Pi^{\gamma,\n}_{t,0} \gamma'(t)\, dt \\
&=  \int_0^{2\pi} \Pi^{\gamma,\n}_{t,0} \partial_\vp|_{(r,t)}\, dt \\
&= r (1 - \cos(2\pi \ca)) \partial_r|_{(r,0)} + \frac{1}{\ca} \sin (2\pi \ca) \partial_\vp|_{(r,0)}.
\end{aligned}
\]
Therefore, 
\[
|B_{\gamma,\n,0}| = 2r |\sin(\pi\ca)|,
\]
and thus, by \thmref{thm:burgers},
\[
\int_{\Circ} |D_s\n|\, d\ell \ge \frac{2r |\sin(\pi\ca)|}{2\pi\ca r} = \frac{|\sin(\pi\ca)|}{\pi\ca}.
\]
Thus, for any isometric immersion $f:\W\to \R^3$,
\beq\label{eq:cones_burgers_bound}
\begin{aligned}
\|d\n\|_{L^2(\W,\g)}^2 
&\ge \int_{r_0}^R \brk{\int_{\Circ} |D_s\n|^2_{\g} \,d\ell} dr 
\ge \int_{r_0}^R \frac{1}{2\pi \ca r} \brk{\int_{\Circ} |D_s\n|_{\g} \,d\ell}^2 dr \\
&\ge \int_{r_0}^R \frac{1}{2\pi \ca r} \frac{\sin^2(\pi\ca)}{(\pi\ca)^2} dr 
= \frac{\sin^2(\pi\ca)}{2\pi^3 \ca^3} \, \log \frac{R}{r_0},
\end{aligned}
\eeq
which completes the proof of the second part of \thmref{thm:cone}.

\begin{comment}
Whenever $\alpha = -2\pi k$ for some $k\in \bbN$, there exists an isometric immersion with zero bending: 
\[
f(r,\vp) = r(\cos(k+1)\vp,\sin(k+1)\vp,0).
\]
This immersion is extendable for even $k$, but not for odd $k$.
This shows that the extendability assumption in \thmref{thm:main_theorem} is essential, otherwise we would have obtained a contradiction to \eqref{eq:cone_lb_curvature} for $k=1$.
It also shows, for $k=2$, that the $4\pi - \|K\|_{\LoneMC}$ prefactor in \thmref{thm:main_theorem} cannot be relieved.
The same example, restricted to a fixed $r$, shows similarly the tightness of \thmref{thm:GB+iso} in the extendable and non-extendable cases.
\end{comment}

\subsection{Cone dipoles (dislocations): proof of \thmref{thm:LB_disloc_optimal}}\label{sec:dislocations}

Consider now a cone dipole.
As discussed in Section~\ref{subsec:examples}, \thmref{thm:main_theorem} 
yields a significantly sub-optimal lower bound in this case.
The body we consider here is as in Figure~\ref{fig:dislocation}, after cutting out a region of diameter $O(r_0)$ containing the two holes (i.e., the domain enclosed by the curve $\sigma$ in Figure~\ref{fig:dislocation}).

As proved in \cite[Sec.~3.1]{KM25}, the magnitude of the dislocation and the dimensions $r_0$ and $R$ essentially determine a unique geometry, modulo the precise shape of the inner and outer boundaries, which are immaterial for the scaling of the lower bound with $r_0$ and $R$.
Hence, we adopt a parametrization that is the easiest to perform calculation with.
Specifically, we fix $\e\in (0,r_0)$, and let  
\[
\W = B_R\setminus B_{r_0},
\]
endowed with the metric
\beq
\g_{rr} =1
\qquad
\g_{r\vp} =  0  
\Textand
\g_{\vp\vp} = \brk{r+\frac{\e}{\pi}\cos \vp}^2 
\label{eq:metric_disloc}
\eeq
where $(r,\vp)$ are the polar coordinates.

A direct calculation, similar to \cite[Appendix A]{Kup17} shows that the magnitude of the Burgers vector of this body is $\e$,
for every generator $\gamma$ of the fundamental group of the annulus, and in particular, for every constant-$r$ curve.
Since from \eqref{eq:metric_disloc}, the perimeter of a constant-$r$ curve $\sigma_r$ is 
$2\pi r $,
it follows from \thmref{thm:burgers} that
\beq\label{eq:disloc_naive_lb_leaf}
\int_{\sigma_r} |D_s \n|\, d\ell \ge \frac{\e}{2\pi r}
\eeq
(cf., \cite[Eq.~8]{Kup17}).
Foliating the domain with constant $r$-curves and using this lower bound as in Section~\ref{sec:cone} only yields the bound \eqref{eq:dislocation_naive_lb} that does not diverge with $R/r_0\to \infty$.

This failure is due to fact that the infimal bending energy of this manifold is significantly higher than the sum of the infimal bending energies of each leaf in the foliation.
This is a manifestation of the super-additivity property mentioned in the introduction, which in practice manifests in the following manner: 
if a surface with a dislocation having minimal bending energy is dissected into annuli, each annulus will relax into a lower-energy configuration.
We now overcome this issue and prove \thmref{thm:LB_disloc_optimal}.

First we note that we are considering $H^2$-isometric immersions of a smooth, flat metric. 
Since smooth isometric immersions of flat metrics are dense in the $H^2$-isometric immersions \cite{Hor11}, it is sufficient to consider smooth immersions.

Every smooth, locally-Euclidean submanifold of $\R^3$ is developable \cite{Mas62}:
It can be partitioned into flat points, at which the mean curvature $H$ equals zero, and non-flat points, the latter constituting an open set.
Through every non-flat point passes a unique asymptotic line, which is a geodesic (in the surface) whose image in $\R^3$ is a straight line (i.e., a geodesic in the ambient space). 
Asymptotic lines do not intersect and do not terminate until hitting the boundary \cite[Theorem~II]{Mas62}. 
Furthermore, since the Gaussian curvature is identically zero, $2 |H| = |d\n|$. 

Since the inner boundary in our case is geodesically convex, asymptotic lines in $\W$ either connect the inner boundary $\Gamma_1$ to the outer boundary $\Gamma_0$, or, connect the outer boundary to itself.  Asymptotic lines that connect the outer boundary to itself create folds that can be flattened out without affecting the rest of the surface, thus reducing the bending energy. Hence, for the sake of finding lower bounds, we may assume that every asymptotic line emanating from the outer boundary intersects eventually the inner boundary.
Furthermore, consider the marginal case where an asymptotic line emanates from the inner boundary, $\{r = r_0\}$, tangentially.
This asymptotic line will intersect the set $\{r = 2r_0\}$ at an angle of $\pi/6$. Thus, by restricting our attention to $r\in(2r_0,R)$, we may assume that all the asymptotic lines intersect constant-$r$ curves at an angle that is at least $\pi/6$.

Let $X$ be a unit vector field in $\W$, whose flow is from the inner boundary to the outer boundary, and  is parallel to asymptotic lines in non-flat points.

\begin{lemma}
\label{lem:dev_surf}
The mean curvature flux is conserved:
\[
\div(HX) = 0,
\]
from which follows that
\beq\label{eq:Lie_der_of_H}
d\iota_X  (H\, \VolG) = 0.
\eeq
\end{lemma}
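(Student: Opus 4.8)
The plan is to work on the open set $\W_{\mathrm{nf}}$ of non-flat points, where the surface is ruled by asymptotic lines, and show that $H\,\VolG$ is a closed $1$-form there; the statement on all of $\W$ (in the sense of distributions, or after observing that $H\equiv 0$ on the flat set) then follows by a density/continuity argument. The key structural fact, recalled in the excerpt from \cite{Mas62}, is that through every non-flat point passes a unique asymptotic line, these are straight lines in $\R^3$ and geodesics of the surface, and they foliate $\W_{\mathrm{nf}}$. Thus $X$ is a smooth unit geodesic field on $\W_{\mathrm{nf}}$, and its integral curves are the rulings. Choosing, locally, coordinates $(s,t)$ adapted to this foliation — $s$ arclength along rulings, $t$ labelling rulings — the metric takes the standard developable form $\g = ds^2 + \phi(s,t)^2\, dt^2$ for a suitable positive function $\phi$ (after possibly reparametrizing $t$), exactly because the flat metric has vanishing curvature and $\pl_s$ is a unit geodesic field.

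First I would compute the second fundamental form in these coordinates. Since the ruling $s\mapsto(s,t)$ maps to a straight line in $\R^3$, the curve has vanishing curvature vector, so $\mathrm{II}(\pl_s,\pl_s)=0$; hence the shape operator has $X=\pl_s$ in its kernel and, being symmetric with respect to $\g$, it is determined by a single entry, $\mathrm{II}(\pl_t,\pl_t)=:h$, with $\mathrm{II}(\pl_s,\pl_t)=0$. Consequently the eigenvalues of the shape operator are $0$ and $h/\phi^2$, so $2H = h/\phi^2$ and (consistently with $K=0$) $|d\n|=2|H|=|h|/\phi^2$. Next I would use the Codazzi equation $(\nabla_{\pl_s}\mathrm{II})(\pl_t,\pl_t) = (\nabla_{\pl_t}\mathrm{II})(\pl_s,\pl_t)$, which, after inserting $\mathrm{II}(\pl_s,\cdot)=0$ and the Christoffel symbols of $\g = ds^2+\phi^2 dt^2$ (only $\Gamma^s_{tt}=-\phi\,\pl_s\phi$ and $\Gamma^t_{st}=\pl_s\phi/\phi$ are nonzero), reduces to a first-order ODE along each ruling relating $\pl_s h$ to $h\,\pl_s\phi/\phi$. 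Solving it gives $\pl_s(h/\phi)=0$, i.e. $h = c(t)\,\phi$, whence $H\phi = h/(2\phi) = c(t)/2$ depends only on $t$ along rulings — that is, $\pl_s(H\phi)=0$.

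Now $\VolG = \phi\, ds\wedge dt$ and $\iota_X(H\,\VolG) = \iota_{\pl_s}(H\phi\, ds\wedge dt) = H\phi\, dt$, so $d\,\iota_X(H\,\VolG) = \pl_s(H\phi)\, ds\wedge dt = 0$, which is \eqref{eq:Lie_der_of_H}; equivalently, unwinding the identity $d\iota_X\omega = (\div_\g X)\,\omega + \iota_X d\omega$ for the area form and $\omega=H\,\VolG$, this is the conservation law $\div(HX)=0$. It remains to pass from $\W_{\mathrm{nf}}$ to $\W$: on the flat set $H=0$, so $H\,\VolG$ and $\iota_X(H\,\VolG)$ extend by zero continuously across $\pl\W_{\mathrm{nf}}\cap\W$, and since $H$ is continuous (the immersion is smooth by the density reduction made just before the lemma), the identity $d\iota_X(H\,\VolG)=0$ holds on all of $\W$ in the distributional sense, hence classically where $X$ is defined.

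The main obstacle is the bookkeeping at the boundary between flat and non-flat regions: $X$ — and the adapted coordinates — are only defined on $\W_{\mathrm{nf}}$, which may have irregular boundary, and one must check that the closed-form identity, proved in the interior of each ruled patch, genuinely globalizes without a contribution supported on $\pl\W_{\mathrm{nf}}$. This is handled by the observation that $H$ vanishes to the appropriate order there so that $H\,\VolG$ and its contraction with $X$ have continuous (indeed vanishing) limits, together with the fact (cited from \cite{Mas62}) that asymptotic lines neither intersect nor terminate in the interior; the remaining content of the lemma is the purely local ODE computation above.
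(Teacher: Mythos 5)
Your proof is correct and rests on the same two ingredients as the paper's: the degeneracy of the shape operator along asymptotic lines ($S(X)=0$, forced by $K=0$ together with $\mathrm{II}(X,X)=0$) and the Codazzi equation, which in both arguments is exactly what converts $X(H)$ into a divergence term. The only difference is presentational: the paper carries out the computation invariantly in an orthonormal frame $\{X,Y\}$, obtaining $2X(H)+2H\langle\nabla_Y X,Y\rangle=0$ and hence $\div(HX)=0$ directly, whereas you work in semi-geodesic coordinates $ds^2+\phi^2\,dt^2$ adapted to the rulings and arrive at the equivalent statement $\pl_s(H\phi)=0$; the two are the same identity in different notation. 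One genuine (and welcome) addition on your side is the explicit discussion of the flat/non-flat interface, where $X$ and the adapted coordinates cease to be defined --- the paper's proof silently works wherever the frame makes sense, while you note that $H$ vanishes on the flat set so the closed-form identity extends across it; this is the more careful reading of the lemma, though for the application (Stokes on an annulus foliated by asymptotic lines) the interior statement suffices.
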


\begin{proof}
Let $Y$ be a vector field such that $\{X,Y\}$ is an orthonormal frame.
By the definition of the asymptotic lines and the definition of the mean curvature as half the trace of the shape operator $S$, we have
\[
S(X) = 0
\Textand S(Y) = 2H\, Y.
\]
The divergence of $X$ is given by
\beq\label{eq:divX}
\div X = \ip{\nabla_X X, X} + \ip{\nabla_Y X, Y} = \ip{\nabla_Y X, Y},
\eeq
where the first term vanishes because $X$ is a unit vector field. 

The Codazzi equations are
\[
(\nabla_X S)(Y) = (\nabla_Y S)(X),
\]
or equivalently,
\[
\nabla_X (S(Y)) - S(\nabla_X Y) = \nabla_Y(S(X)) - S(\nabla_Y X),
\]
which reduces to
\[
2 X(H)\, Y  + 2H\, \nabla_X Y  - S(\nabla_X Y) + S(\nabla_Y X) = 0.
\]
Take an inner product with $Y$, using the fact that $S$ is symmetric and once again the identity $S(Y) = 2HY$,
\[
2 X(H) + 2H \ip{\nabla_Y X,Y} = 0.
\]
Combining with  \eqref{eq:divX},
\[
\div(HX) = X(H) + H \,\div(X) = 0.
\]
Finally  \cite[p. 32]{Lee18},
\[
0 = \div(HX)\,\VolG = d\iota_{HX} \VolG = d\iota_X  (H\, \VolG).
\]
\end{proof}

Using the fact that $H$ does not change sign along asymptotic lines, as well as the identity $2 |H| = |d\n|$,  it follows from 
\eqref{eq:Lie_der_of_H} that 
\beq\label{eq:Lie_der_of_H_Catan}
d \iota_X (|d\n|\,\VolG) = 0.
\eeq
Let $\sigma\subset\W$ be any loop homotopic to $\Gamma_1$.
Integrating \eqref{eq:Lie_der_of_H_Catan} over the domain bounded by $\Gamma_1$ and $\sigma$, we obtain by Stokes' theorem
\beq\label{eq:dN_sigma_Gamma}
\int_{\sigma} |d\n|\, \ip{X,\nu}\, \iVolG = \int_{\Gamma_1} |d\n|\, \ip{X,\nu}\, \iVolG,
\eeq
where $\nu$ is the normal to the curve. 

Denote by $\sigma_r$ the constant-$r$ loops.
By the above discussion of the intersection of asymptotic line with constant-$r$ curves, we always have
\[
\ip{X,\nu} \ge \cos\frac{\pi}{6} = \frac{1}{\sqrt{2}},
\]
hence for
every $r\in(2 r_0,R)$,
\beq
\begin{aligned}
\int_{\sigma_r} |d\n|\, \iVolG &\ge \int_{\sigma_r} |d\n|\, \ip{X,\frakn}\, \iVolG 
\overset{\eqref{eq:dN_sigma_Gamma}}{=}  \int_{\sigma_{2r_0}} |d\n|\, \ip{X,\frakn}\, \iVolG 
\ge \frac{1}{\sqrt{2}}\, \int_{\sigma_{2r_0}} |d\n|\,  \iVolG \\
&\overset{\eqref{eq:disloc_naive_lb_leaf}} \ge \frac{1}{4\sqrt{2}\pi} \frac{\e}{r_0}.
\end{aligned}
\label{eq:sigma'}
\eeq
Note the improvement of this bound over \eqref{eq:disloc_naive_lb_leaf}, as it does not deteriorate with $r$.

We proceed now as in \eqref{eq:cones_burgers_bound}: 
\[
\|d\n\|_{L^2(\W,\g)}^2   
\ge \int_{2r_0}^R \brk{\int_{\Circ} |d\n|^2_{\g} \,d\ell} dr 
\ge \int_{2 r_0}^R \frac{1}{2\pi r} \brk{\int_{\Circ} |d\n|_{\g} \,d\ell}^2 dr
\overset{\eqref{eq:sigma'}}{\ge} \frac{1}{64\pi^3}\frac{ \e^2}{r_0^2} \log \frac{R}{r_0}.
\]
This completes the proof of \thmref{thm:LB_disloc_optimal}.

\appendix

\section{Multiplication and composition in $\Hs$}

In this appendix, we prove some technical results concerning functions with $\Hs$ regularity, used in Section \ref{sec:Darboux}.
We begin by proving that the composition of an $\Hs$ function with a Lipschitz function is well-behaved. 

\begin{lemma}\label{lem:Hs_composition1}
Let $F:\R^k\to \R^m$ be a Lipschitz map, and let $\W\subset \R^d$ be a bounded Lipschitz domain (or a compact manifold).
Then, for any $s\in (0,1)$, the map $f\mapsto F\circ f$ is a continuous map from $\Hs(\W;\R^k)$ to $\Hs(\W;\R^m)$.
\end{lemma}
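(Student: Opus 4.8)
The plan is to work with the intrinsic Gagliardo characterization of $\Hs$ for $s\in(0,1)$: recall that
\[
\Norm{f}_{\Hs(\W)}^2 = \Norm{f}_{L^2(\W)}^2 + [f]_{\Hs(\W)}^2, \qquad [f]_{\Hs(\W)}^2 = \int_\W\int_\W \frac{|f(x)-f(y)|^2}{|x-y|^{d+2s}}\,dx\,dy,
\]
with geodesic distance in the compact-manifold case (on $\Circ$ this is equivalent to the Fourier definition). First I would check that $F\circ f\in\Hs$: writing $L$ for the Lipschitz constant of $F$, the bound $|F(v)|\le|F(0)|+L|v|$ gives $\Norm{F\circ f}_{L^2(\W)}\le|F(0)|\,|\W|^{1/2}+L\Norm{f}_{L^2(\W)}$ (here the boundedness of $\W$ enters), while $|F(f(x))-F(f(y))|\le L|f(x)-f(y)|$ gives $[F\circ f]_{\Hs(\W)}\le L[f]_{\Hs(\W)}$. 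So the map is well-defined and bounded on bounded sets; the substantive point is continuity.

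For continuity, let $f_n\to f$ in $\Hs(\W)$. The $L^2$ part is immediate, since $\Norm{F\circ f_n-F\circ f}_{L^2(\W)}\le L\Norm{f_n-f}_{L^2(\W)}\to 0$. For the seminorm I would argue by a subsequence-and-contradiction scheme combined with a generalized dominated convergence theorem. Along any subsequence one may pass to a further subsequence (not relabeled) with $f_n\to f$ a.e.\ in $\W$; then $f_n(x)-f_n(y)\to f(x)-f(y)$ for a.e.\ $(x,y)\in\W\times\W$, and by continuity of $F$,
\[
\Phi_n(x,y):=\frac{\brk{F(f_n(x))-F(f_n(y))}-\brk{F(f(x))-F(f(y))}}{|x-y|^{(d+2s)/2}}\longrightarrow 0 \quad\text{a.e.}
\]
Now $|\Phi_n|^2\le\psi_n$, where
\[
\psi_n(x,y)=L^2\brk{\frac{|f_n(x)-f_n(y)|}{|x-y|^{(d+2s)/2}}+\frac{|f(x)-f(y)|}{|x-y|^{(d+2s)/2}}}^2,
\]
and $\psi_n\to\psi:=4L^2\,|f(x)-f(y)|^2/|x-y|^{d+2s}$ a.e.; moreover $\int_{\W\times\W}\psi_n\to\int_{\W\times\W}\psi$, because the function $(x,y)\mapsto|f_n(x)-f_n(y)|/|x-y|^{(d+2s)/2}$ converges in $L^2(\W\times\W)$ to $(x,y)\mapsto|f(x)-f(y)|/|x-y|^{(d+2s)/2}$ — the estimate $\big|\,|f_n(x)-f_n(y)|-|f(x)-f(y)|\,\big|\le|(f_n-f)(x)-(f_n-f)(y)|$ bounds the $L^2(\W\times\W)$ distance by $[f_n-f]_{\Hs(\W)}\to 0$ — so the $L^2$ norms of the sums converge to $4L^2[f]_{\Hs(\W)}^2=\int_{\W\times\W}\psi$. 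The generalized dominated convergence theorem with variable majorants then gives $\int_{\W\times\W}|\Phi_n|^2\to 0$, i.e.\ $[F\circ f_n-F\circ f]_{\Hs(\W)}\to 0$ along the subsequence. Since $F\circ f_n\to F\circ f$ in $L^2$ and every subsequence has a further subsequence along which it also converges in seminorm, the whole sequence converges in $\Hs(\W)$, which is the claim.

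The main obstacle is precisely the absence of a single $L^2$ majorant for $|\Phi_n|$: one cannot dominate $|f_n(x)-f_n(y)|$ by a fixed integrable function independent of $n$, so ordinary dominated convergence does not apply. This is circumvented by the generalized dominated convergence theorem, whose hypotheses are verified via the $L^2(\W\times\W)$-convergence of the Gagliardo difference quotients of $f_n$; the remaining ingredients (the $L^2$ bound, the Lipschitz estimate on the seminorm, and the subsequence argument) are routine.
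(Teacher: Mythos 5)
Your proof is correct, but it takes a genuinely different route from the paper's. The paper first notes the uniform bound $\|F\circ f_n\|_{\dHs}\le \|F\|_{\mathrm{Lip}}\|f_n\|_{\dHs}$ to get weak $\Hs$-convergence $F\circ f_n\weakly F\circ f$, and then upgrades this to strong convergence by proving convergence of norms: it splits the Gagliardo double integral at $|x-y|=\epsilon$, handles the off-diagonal part by $L^2$-convergence of $F\circ f_n$, dominates the near-diagonal part by $\|F\|_{\mathrm{Lip}}$ times the corresponding tail of $\|f_n\|_{\dHs}$ (which is small uniformly in $n$ by strong convergence of $f_n$), concludes $\limsup_n\|F\circ f_n\|_{\dHs}\le\|F\circ f\|_{\dHs}$, and invokes weak lower semicontinuity plus the Hilbert-space fact that weak convergence together with convergence of norms implies strong convergence. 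You instead estimate $[F\circ f_n-F\circ f]_{\Hs}$ directly: passing to an a.e.-convergent subsequence, you apply the generalized dominated convergence theorem with the variable majorants $\psi_n$, whose $L^1$-convergence you correctly reduce (via the reverse triangle inequality) to the $L^2(\W\times\W)$-convergence of the Gagliardo difference quotients of $f_n$. Both arguments are complete. The paper's is shorter but relies on the Hilbert-space structure of $\Hs$; yours is pure measure theory, never uses weak convergence, and carries over verbatim to $W^{s,p}$ for any $1\le p<\infty$ (including $p=1$, where the ``weak plus norm convergence implies strong convergence'' step would fail). For the purposes of this paper only $p=2$ is needed, so the two are interchangeable here.
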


\begin{proof}
\textbf{Step 1: Uniform bounds and weak convergence.}
Let $\| F \|_{\mathrm{Lip}}$ denotes the Lipschitz constant of $F$.
From the definition of the Gagliardo $\Hs$ seminorm, it is immediate that $ \| F\circ f \|_{\dHs}^2 \le  \| F \|_{\mathrm{Lip}}^2 \| f \|_{\dHs}^2$, hence the composition is well-defined.
To prove continuity, let $f_n$ be a sequence in $\Hs(\W;\R^k)$ such that $f_n \to f$ strongly. Then $F \circ f_n$ is uniformly bounded in $\Hs(\W;\R^k)$, and by uniqueness of limits converges to $F \circ f$ weakly in $\Hs(\W;\R^k)$.

\textbf{Step 2: Strong convergence. }
To prove that this convergence is in fact strong, we will prove that $ \lim_{n \to \infty} \| F\circ f_n \|_{\Hs} = \| F\circ f \|_{\Hs} $.
As the weak $\Hs$ convergence $F\circ f_n\weakly F\circ f$ implies strong $L^2$ convergence, it remains to prove convergence of the $\dHs$-seminorm.
To prove this claim we will split the domain into points that are close together and far apart. 
Let $\epsilon >0$, then there holds 
\beq
\begin{aligned}
\limsup_{n \to \infty} \| F\circ f_n \|_{\dHs}^2
& = \limsup_{n \to \infty}  \iint_{  |x-y| > \epsilon} \frac{ \left| F(f_n(x)) - F(f_n(y)) \right|^2}{|x-y|^{d + 2s }} dx dy \\
&\qquad + \iint_{  |x-y| \le \epsilon} \frac{ \left| F(f_n(x)) - F(f_n(y)) \right|^2}{|x-y|^{d + 2s}} dx dy  \\
&\le  \limsup_{n \to \infty}  \iint_{ |x-y| > \epsilon} \frac{ \left| F(f_n(x)) - F(f_n(y)) \right|^2}{|x-y|^{d + 2s}} dx dy \\
&\qquad + \limsup_{n \to \infty}  \| F \|_{\mathrm{Lip}} \iint_{ |x-y| \le \epsilon} \frac{ \left| f_n(x) - f_n(y) \right|^2}{|x-y|^{d + 2s}} dx dy  \\
&= \iint_{ |x-y| > \epsilon} \frac{ \left| F(f(x)) - F(f(y)) \right|^2}{|x-y|^{d + 2s}} dx dy \\
&\qquad + \| F \|_{\mathrm{Lip}} \iint_{ |x-y| \le \epsilon} \frac{ \left| f(x) - f(y) \right|^2}{|x-y|^{d + 2s}} dx dy \\
& \le \|F\circ f\|^2_{\dHs}+ \| F \|_{\mathrm{Lip}} \iint_{ |x-y| \le \epsilon} \frac{ \left| f(x) - f(y) \right|^2}{|x-y|^{d + 2s}} dx dy, 
\end{aligned}
\label{eq:longeq}
\eeq 
where in the third passage, the limit of the first addend follows from the $L^2$ convergence $F\circ f_n\to F\circ f$, and in the second addend from the strong $H^s$ convergence $f_n\to f$.
Letting $\epsilon\to0$ we obtain that 
\[
\begin{aligned}
\limsup_{n \to \infty} \| F\circ f_n \|_{\dHs}^2 \le \| F\circ f \|_{\dHs}^2 . 
\end{aligned}
\]
Along with weak lower semicontinuity of the $\Hs$ norm, this concludes the proof. 
\end{proof}

We move on to the second lemma of this section, which concerns the composition of $\Hs$ functions. 

\begin{lemma}\label{lem:Hs_composition2}
Let $\W, \W'$ be bounded Lipschitz domains (or compact Riemannian manifolds) and let $F:\W' \to \W$ be a bi-Lipschitz map.
Then, for any $s\in (0,1)$, the map $f \mapsto f\circ F$ is a continuous linear map from $\Hs(\W;\R^d)$ to $\Hs(\W';\R^d)$.
Moreover, for any constant $L>0$, let $ H^{\sigma}_L(\W';\W) := \{F\in H^\sigma(\W';\W) ~:~ \text{$F$ is $L$-bi-Lipschitz}\}$. Then the action
\[
\Hs(\W;\R^d) \times H^{\sigma}_L(\W';\W) \to \Hs(\W';\R^d), \quad (f,F)\mapsto f\circ F
\]
is continuous for any $s\in (0,1)$ and $\sigma \ge 1$.
\end{lemma}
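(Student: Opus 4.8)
The plan is to handle the two assertions in turn: the first is a change of variables in the Gagliardo seminorm, and the second is a density argument resting on the uniformity (in the bi-Lipschitz constant) of the bound obtained in the first.

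For the first assertion, I would fix an $L$-bi-Lipschitz $F$ and estimate $\|f\circ F\|_{\dHs(\W')}$ by the substitution $u=F(x)$, $v=F(y)$. Since a bi-Lipschitz map is differentiable almost everywhere (Rademacher) with $|\det DF|$ and $|\det DF^{-1}|$ bounded by $L^{d}$, the change-of-variables formula applies; together with $|x-y|\ge L^{-1}|F(x)-F(y)|$ this gives $\|f\circ F\|_{\dHs(\W')}^2\le L^{3d+2s}\|f\|_{\dHs(\W)}^2$ and, likewise, $\|f\circ F\|_{L^2(\W')}^2\le L^{d}\|f\|_{L^2(\W)}^2$. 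Linearity being obvious, $f\mapsto f\circ F$ is bounded linear $\Hs(\W;\R^d)\to\Hs(\W';\R^d)$ with operator norm controlled by a constant $C(L,d,s)$ that depends on $F$ only through $L$; this uniformity is exactly what the second part will need.

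For the joint continuity, let $(f_n,F_n)\to (f,F)$ in $\Hs(\W;\R^d)\times H^\sigma_L(\W';\W)$. First I would check that the limit lies in $H^\sigma_L$: passing to an almost-everywhere-convergent subsequence of $F_n$, the two-sided Lipschitz bounds survive in the pointwise limit and hence hold everywhere after correcting on a null set. Then write
\[
f_n\circ F_n-f\circ F=(f_n-f)\circ F_n+\bigl(f\circ F_n-f\circ F\bigr).
\]
The first summand tends to $0$ in $\Hs(\W')$ by the first part, since all $F_n$ share the bi-Lipschitz constant $L$. Everything therefore reduces to showing $f\circ F_n\to f\circ F$ in $\Hs(\W')$. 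I would prove this first for Lipschitz $f$ and then invoke density of Lipschitz maps in $\Hs(\W)$ ($s<1$). For Lipschitz $f$: the bound $|f(F_n(x))-f(F(x))|\le\|f\|_{\mathrm{Lip}}\,|F_n(x)-F(x)|$ makes the $L^2$ part of the difference vanish because $F_n\to F$ in $L^2$; for the seminorm of $g_n:=f\circ F_n-f\circ F$, split the double integral into $|x-y|\le\e$ and $|x-y|>\e$ exactly as in the proof of \lemref{lem:Hs_composition1}. On the near-diagonal region the estimate $|g_n(x)-g_n(y)|\le 2L\|f\|_{\mathrm{Lip}}|x-y|$ produces a contribution $\le C\e^{2(1-s)}$ uniformly in $n$ (here $s<1$ is essential), and on the far region the kernel is bounded, so that term is $\le \e^{-(d+2s)}\|g_n\|_{L^2}^2\to 0$; letting $n\to\infty$ and then $\e\to 0$ gives $\|g_n\|_{\dHs(\W')}\to 0$. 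For general $f\in\Hs(\W)$, approximate by Lipschitz $\tilde f$ with $\|f-\tilde f\|_{\Hs(\W)}<\delta$, estimate $\|f\circ F_n-f\circ F\|_{\Hs(\W')}\le\|(f-\tilde f)\circ F_n\|_{\Hs}+\|\tilde f\circ F_n-\tilde f\circ F\|_{\Hs}+\|(\tilde f-f)\circ F\|_{\Hs}$, bound the two outer terms by $C(L,d,s)\,\delta$ via the first part, send the middle term to $0$, and then let $\delta\to 0$.

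The main obstacle is this last reduction: when $f$ is merely in $\Hs$ and $F_n$ converges only in $H^\sigma$ there is no pointwise control on $f\circ F_n-f\circ F$, and it is the near/far-diagonal decomposition (borrowed from \lemref{lem:Hs_composition1}) combined with the $L$-uniform operator bound from the first part that makes the argument go through. It is worth noting that $\sigma\ge 1$ is used only to guarantee an almost-everywhere-convergent subsequence with $F\in H^\sigma_L$ and the $L^2$-convergence $F_n\to F$; the Gagliardo-seminorm estimates themselves never see anything about $F_n$ beyond its bi-Lipschitz constant.
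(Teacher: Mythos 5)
Your proposal is correct, and the first half (the change of variables giving $\|f\circ F\|_{\dHs}\le C(L,d,s)\|f\|_{\dHs}$, with the constant depending on $F$ only through $L$) coincides with the paper's. The second half takes a genuinely different route. The paper argues via weak convergence plus convergence of norms: it writes $\|f\circ F_n\|_{\dHs}^2=\iint|f(x)-f(y)|^2K_{F_n}(x,y)\,dx\,dy$, uses $\sigma\ge 1$ to extract $\nabla F_n\to\nabla F$ a.e.\ and hence $K_{F_n}\to K_F$ a.e., applies dominated convergence (with the uniform kernel bound as dominating function) to get $\|f\circ F_n\|_{\dHs}\to\|f\circ F\|_{\dHs}$, and upgrades weak convergence to strong via lower semicontinuity. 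You instead prove strong convergence directly: you peel off $(f_n-f)\circ F_n$ using the $L$-uniform operator bound, reduce to $f\circ F_n\to f\circ F$, establish this for Lipschitz $f$ by the near/far-diagonal splitting of \lemref{lem:Hs_composition1} (the near part is $O(\e^{2(1-s)})$ uniformly in $n$ since $s<1$, the far part is controlled by $\|f\circ F_n-f\circ F\|_{L^2}\to0$), and conclude for general $f\in\Hs$ by density of Lipschitz functions together with the uniform bound. Your version is slightly more elementary and robust --- it never touches the kernels $K_{F_n}$ pointwise and uses $\sigma\ge1$ only to ensure $F_n\to F$ in $L^2$, so it would work under any hypothesis giving that convergence --- at the cost of an extra density layer; the paper's version is shorter but leans on the a.e.\ convergence of $\nabla F_n$ and a Radon--Riesz-type step. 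Both are complete proofs of the stated lemma.
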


\begin{proof}
Changing variables, 
\[
 \| {f} \circ F \|_{\dHs}^2 = \iint_{ \W \times \W}  \left| f(x) - f(y) \right|^2 K_F(x,y) dx dy,
\]
where
\[
K_F(x,y) = \frac{\left| \det \nabla F (F^{-1}(x)) \right|^{-1} \left| \det\nabla F (F^{-1}(y)) \right|^{-1}}{|F^{-1}(x)-F^{-1}(y)|^{d + 2s}}. 
\]
For an $L$-biLipschitz map, these exists a constant $C(L)$, such that
\beq\label{eq:kernel_bound}
K_F(x,y) \le \frac{C(L)}{|x-y|^{d+2s}},
\eeq
Thus $\| {f} \circ F \|_{\dHs}^2 \le C \| f \|_{\dHs}^2$ and the map $f\mapsto f\circ F$ is well-defined.
Furthermore, as in \lemref{lem:Hs_composition1}, it is continuous with respect to weak convergence, noting that weak convergence $F_n\to F$ in $H^{\sigma}_L(\W';\W)$ implies uniform convergence of $F_n$ and $F_n^{-1}$ by the uniform bi-Lipschitz bound.

To complete the proof, it suffices to show, as before, that for $f_n\to f$ in $H^s$ and $F_n\to F$ in $H^\sigma$ we have that $\limsup \|f_n \circ F_n\|_{\dHs} \le \|f \circ F\|_{\dHs}$.
Since $F_n\to F$ in $H^\sigma$ and $\sigma\ge 1$, we can assume that $\nabla F_n\to \nabla F$ almost everywhere; since we already know that $F_n^{-1}\to F^{-1}$ uniformly, we have that $K_{F_n}\to K_F$ almost everywhere.
Thus we obtain that $\|f\circ F_n\|_{\dHs}\to \|f\circ F\|_{\dHs}$ by the dominated convergence theorem, using \eqref{eq:kernel_bound}, and thus
\[
\|f_n \circ F_n\|_{\dHs} \le \|f\circ F_n\|_{\dHs} + \|(f_n -f) F_n\|_{\dHs} \le  \|f\circ F_n\|_{\dHs} + C\|f_n -f\|_{\dHs} \to \|f\circ F\|_{\dHs},
\]
which completes the proof.
\end{proof}

The last lemma in this section shows that the multiplication of $\Hs$ functions is well-behaved:

\begin{lemma}\label{lem:Hs_multiplication}
Fix $B>0$, $s\in (0,1)$ and $\W\subset \R^d$ be a bounded Lipschitz domain. 
Then the pointwise multiplication map $(f_1,f_2)\mapsto f_1f_2$ is a continuous map $\Hs_B(\W)\times \Hs_B(\W)\to \Hs(\W)$ with respect to the $\Hs$ topology, where $\Hs_B(\W) = \{f\in \Hs(\W) ~:~ \|f\|_{L^\infty} \le B\}$.
\end{lemma}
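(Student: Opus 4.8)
The plan is to prove $\Hs_B(\W)\times\Hs_B(\W)\to\Hs(\W)$ continuity of pointwise multiplication by combining an $L^2\cap\dHs$ estimate with a limsup argument analogous to the one used in \lemref{lem:Hs_composition1}. First I would observe that for $f_1,f_2\in\Hs_B(\W)$, boundedness in $L^\infty$ gives $\|f_1f_2\|_{L^2}\le B\|f_1\|_{L^2}$, so the product is in $L^2$. For the Gagliardo seminorm, I would use the algebraic identity
\[
f_1(x)f_2(x)-f_1(y)f_2(y) = f_1(x)\bigl(f_2(x)-f_2(y)\bigr) + f_2(y)\bigl(f_1(x)-f_1(y)\bigr),
\]
so that by $|a+b|^2\le 2|a|^2+2|b|^2$ and the $L^\infty$ bound,
\[
\frac{|f_1(x)f_2(x)-f_1(y)f_2(y)|^2}{|x-y|^{d+2s}} \le 2B^2\,\frac{|f_2(x)-f_2(y)|^2}{|x-y|^{d+2s}} + 2B^2\,\frac{|f_1(x)-f_1(y)|^2}{|x-y|^{d+2s}}.
\]
Integrating over $\W\times\W$ yields $\|f_1f_2\|_{\dHs}^2\le 2B^2\bigl(\|f_1\|_{\dHs}^2+\|f_2\|_{\dHs}^2\bigr)$, so multiplication is a well-defined bounded bilinear map on the indicated sets.

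Next I would establish continuity. Let $f_1^n\to f_1$ and $f_2^n\to f_2$ strongly in $\Hs$, with all terms in $\Hs_B$. After passing to a subsequence I may assume $f_i^n\to f_i$ pointwise a.e., and since the $L^\infty$ bound is preserved (the pointwise a.e.\ limit of functions bounded by $B$ is bounded by $B$), the limit $f_i$ lies in $\Hs_B$ as well. By the boundedness just proved, $f_1^nf_2^n$ is bounded in $\Hs$; since it converges to $f_1f_2$ in, say, $L^1$ (by dominated convergence using the uniform $L^\infty$ bound and $L^2\subset L^1$ on the bounded domain), it converges weakly in $\Hs$ to $f_1f_2$. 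As in \lemref{lem:Hs_composition1}, weak $\Hs$ convergence already gives strong $L^2$ convergence, so it remains to show convergence of the $\dHs$-seminorms, for which it suffices to prove $\limsup_{n\to\infty}\|f_1^nf_2^n\|_{\dHs}^2 \le \|f_1f_2\|_{\dHs}^2$.

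For the limsup bound I would split the integration domain into $\{|x-y|>\epsilon\}$ and $\{|x-y|\le\epsilon\}$. On the far region the integrand is dominated by $|x-y|^{-d-2s}$ times the uniformly bounded quantity $|f_1^n(x)f_2^n(x)-f_1^n(y)f_2^n(y)|^2\le (2B^2)^2$, which is an integrable majorant on $\{|x-y|>\epsilon\}\cap(\W\times\W)$; combined with the a.e.\ convergence of $f_i^n$, dominated convergence gives that the far-region integral converges to the corresponding integral for $f_1f_2$, which is at most $\|f_1f_2\|_{\dHs}^2$. On the near region I apply the two-term split above to bound the integrand by $2B^2$ times the sum of the Gagliardo integrands of $f_1^n$ and $f_2^n$; by the strong $\Hs$ convergence $f_i^n\to f_i$, the $\{|x-y|\le\epsilon\}$ tails of these Gagliardo integrals converge to the corresponding tails for $f_i$, which tend to $0$ as $\epsilon\to0$ by absolute continuity of the integral. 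Taking $\limsup_n$ then $\epsilon\to0$ yields $\limsup_n\|f_1^nf_2^n\|_{\dHs}^2\le\|f_1f_2\|_{\dHs}^2$, and together with weak lower semicontinuity of the $\Hs$-norm this forces norm convergence, hence strong convergence. Since every subsequence has a further subsequence converging strongly to the same limit $f_1f_2$, the whole sequence converges, proving continuity.

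The main obstacle I anticipate is the careful bookkeeping in the near-region estimate: one must ensure that the $\epsilon$-tail $\iint_{|x-y|\le\epsilon}|f_i^n(x)-f_i^n(y)|^2|x-y|^{-d-2s}\,dx\,dy$ is controlled \emph{uniformly in $n$} in a way that vanishes as $\epsilon\to0$. This is exactly the point handled in \lemref{lem:Hs_composition1} via the third-passage limit, where one uses strong $\Hs$ convergence to pass the limit inside the restricted integral before sending $\epsilon\to0$; the same device applies here, the only new ingredient being the elementary product split. Everything else is routine: the $L^\infty$ bound is what makes the bilinear estimate and the dominated-convergence arguments go through, and no issue arises from $\W$ being merely Lipschitz since only the Gagliardo seminorm (an intrinsic integral) is used.
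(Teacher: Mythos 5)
Your proposal is correct and follows essentially the same route as the paper: a bilinear Gagliardo/$L^\infty$ bound for well-definedness, weak convergence of the products from the uniform bound, and then norm convergence via the $\{|x-y|>\epsilon\}$ / $\{|x-y|\le\epsilon\}$ splitting, with the near-diagonal tail controlled uniformly by the strong $\Hs$ convergence of the factors. The only cosmetic difference is that you handle the far-from-diagonal region by a.e.\ convergence plus dominated convergence along a subsequence (cleaned up by the standard subsequence-of-a-subsequence argument), where the paper invokes $L^2$ convergence of the products directly; both are valid.
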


\begin{proof}
\textbf{Step 1: Uniform bounds and weak convergence. }
It follows immediately from the definition of the Gagliardo seminorm that $ \| f_1f_2 \|_{\dHs} \le C \left(  \| f_1 \|_{L^\infty} \| f_2 \|_{\dHs}+  \| f_2 \|_{L^\infty} \| f_1 \|_{\dHs} \right)$ for some uniform constant $C>0$, and similarly for the $L^2$ part of the norm.
Thus the multiplication is well-defined.

In order to prove continuity, we proceed as in  \lemref{lem:Hs_composition1}. Let $f_{1}^{n}, f_{2}^{n}$ be two sequences in $\Hs_B(\W)$ such that $f^{n}_{1} \to f_{1}$ and $f^{n}_{2} \to f_{2}$ strongly in $\Hs$. Then $f_{1}^{n}f_{2}^{n}$ is uniformly bounded in the $\Hs$ norm, and by uniqueness of limits converges to $f_{1}f_{2}$ weakly in $\Hs_B(\W)$.

\textbf{Step 2: Strong convergence.}
As in \lemref{lem:Hs_composition1}, it is sufficient to prove that $ \lim_{n \to \infty} \| f_{1}^{n}f_{2}^{n} \|_{\dHs} = \| f_{1}f_{2} \|_{\dHs} $.
The proof proceed similarly, splitting the integral into $\{|x-y|>\e\}$ and $\{|x-y|<\e\}$ for some $\e>0$, where the convergence away from the diagonal follows from the $L^2$ convergence $f_1^nf_2^n \to f_1f_2$, and the integrand close to the boundary is estimated similarly and shown to vanish as $\e\to 0$.
\end{proof}

\section{Elliptic theory}
\label{sect:AppB}

In this appendix, we present some results regarding the elliptic equation \eqref{eq:ELPDE}. 
We begin by proving existence and uniqueness of an optimal test function in the definition of the $\HMoneMC$ norm. 

\begin{lemma}
Let $\W\subset\R^2$ be a finitely-connected, Lipschitz domain.
Denote by $\Gamma_0 = \partial D$ and $\Gamma_i = \partial D_i$, $i=1,\dots,m$ the outer and inner boundaries.
Let $\g$ be a bounded Riemannian metric on $\W$ with bounded inverse. 
Let $K\in L^p$ and $K_i \in \R$ for $i=1,\dots,m$, not all of them zero.
Consider the functional $\mathcal{E}: \HoneMC \to \R$ defined as 
\[
\mathcal{E} (u):= \int_\W u K\, \VolG + \sum_{i=1}^m K_i u|_{\Gamma_i}. 
\] 
Then there exists a unique $u^* \in \HoneMC$ such that $\left\| u^* \right\|_{\dotHnorm} = 1$ and 
 \[
\mathcal{E} (u^*) = \sup_{\left\| u\right\|_{\dotHnorm} = 1} \mathcal{E} (u).
\]
\end{lemma}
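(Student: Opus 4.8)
\emph{Plan of proof.} The plan is to recognize $\mathcal{E}$ as a nonzero bounded linear functional on a suitable Hilbert space, apply the Riesz representation theorem to obtain the maximizer, and use the equality case of the Cauchy--Schwarz inequality for its uniqueness. The first point to settle is that $\|\cdot\|_{\dotHnorm}$ is a genuine complete norm on $\HoneMC$, not merely a seminorm. On the one hand, $\HoneMC$ is a closed subspace of $\Hone(\W)$, since the conditions of vanishing on $\Gamma_0$ and being constant on each $\Gamma_i$ are preserved under $\Hone$-convergence by continuity of the trace operator $\Hone(\W)\to L^2(\dW)$. On the other hand, since $\W$ is a bounded Lipschitz domain and $\Gamma_0\subset\dW$ has positive one-dimensional Hausdorff measure, a Poincar\'e-type inequality $\|u\|_{L^2(\W)}\le C\|\nabla u\|_{L^2(\W)}$ holds for every $u\in\Hone(\W)$ with $u|_{\Gamma_0}=0$ (by a standard Rellich--Kondrachov compactness argument). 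Because $\g$ and $\g^{-1}$ are bounded, $\|u\|_{\dotHnorm}$ is equivalent to $\|\nabla u\|_{L^2(\W)}$, and hence on $\HoneMC$ to the full $\Hone$-norm; thus $\HoneMC$ equipped with the bilinear form $a(u,v):=\int_\W\ip{\nabla u,\nabla v}_\g\,\VolG$, which induces $\|\cdot\|_{\dotHnorm}$, is a Hilbert space.

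Next I would check that $\mathcal{E}$ is bounded and not identically zero on $\HoneMC$. For the bulk term, H\"older's inequality together with the two-dimensional Sobolev embedding $\Hone(\W)\inj L^{q}(\W)$ (valid for every $q<\infty$, in particular for the conjugate exponent of $p$) gives $\Abs{\int_\W uK\,\VolG}\le\|K\|_{L^p(\W,\g)}\|u\|_{L^{q}(\W,\g)}\le C\|K\|_{L^p}\|u\|_{\dotHnorm}$. For the boundary terms, the trace inequality $\|u\|_{L^2(\Gamma_i)}\le C\|u\|_{\Hone(\W)}$, combined with the fact that $u$ is constant on $\Gamma_i$, forces $\Abs{u|_{\Gamma_i}}\le C\|u\|_{\Hone(\W)}\le C'\|u\|_{\dotHnorm}$, so that $\Abs{\sum_{i=1}^m K_i\,u|_{\Gamma_i}}\le C\sum_{i=1}^m|K_i|\,\|u\|_{\dotHnorm}$. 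Hence $\mathcal{E}\in(\HoneMC)^*$. To see that $\mathcal{E}\neq 0$: if $K\not\equiv 0$, pick $u\in C_c^\infty(\W)\subset\HoneMC$ with $\int_\W uK\neq 0$; if $K\equiv 0$ but $K_{i_0}\neq 0$ for some $i_0$, take $u\in\HoneMC$ equal to $1$ on $\Gamma_{i_0}$ and to $0$ on every other boundary component, so that $\mathcal{E}(u)=K_{i_0}\neq 0$.

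Finally, the Riesz representation theorem yields a unique $w\in\HoneMC$ with $\mathcal{E}(u)=a(w,u)$ for all $u\in\HoneMC$ and $\|w\|_{\dotHnorm}=\|\mathcal{E}\|_{(\HoneMC)^*}=\sup_{\|u\|_{\dotHnorm}=1}\mathcal{E}(u)>0$. Then $u^*:=w/\|w\|_{\dotHnorm}$ satisfies $\|u^*\|_{\dotHnorm}=1$ and $\mathcal{E}(u^*)=\|w\|_{\dotHnorm}$, which is the claimed supremum. For uniqueness, if $u\in\HoneMC$ satisfies $\|u\|_{\dotHnorm}=1$ and $a(w,u)=\|w\|_{\dotHnorm}$, the equality case of Cauchy--Schwarz forces $u$ to be a nonnegative multiple of $w$, and the normalization then gives $u=u^*$. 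I expect the step requiring genuine care to be the Poincar\'e inequality on the Lipschitz domain with a Dirichlet condition imposed only on the single boundary component $\Gamma_0$, together with the resulting completeness of $\HoneMC$ in the homogeneous norm; the other ingredients---the Sobolev embedding, the trace inequality, and Riesz representation---are entirely standard.
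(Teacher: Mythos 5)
Your proposal is correct. It differs from the paper's argument in mechanism, though both live in the same Hilbert-space framework: the paper obtains existence by ``standard use of the direct method'' (weak compactness of the unit ball of $\HoneMC$ plus weak continuity of the bounded linear functional $\mathcal{E}$) and uniqueness from strict convexity of the unit ball, whereas you invoke the Riesz representation theorem to produce the maximizer explicitly as the normalized representer $w$ of $\mathcal{E}$, and settle uniqueness via the equality case of Cauchy--Schwarz. The two routes are essentially equivalent for a linear functional on a Hilbert space, but yours buys something concrete: the representer $w$ is precisely (a multiple of) the function $u_K$ solving the floating-potential problem \eqref{eq:ELPDE}, so your construction makes the dual-norm identity \eqref{eq:dualnorm} immediate rather than a separate observation. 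Your preparatory work --- closedness of $\HoneMC$ under traces, the Poincar\'e inequality for functions vanishing only on $\Gamma_0$, equivalence of $\|\cdot\|_{\dotHnorm}$ with the full $\Hone$-norm, and boundedness of $\mathcal{E}$ via Sobolev embedding and the trace inequality --- is exactly what the paper's ``standard'' direct method also needs and records separately in \eqref{eq:poincare}--\eqref{eq:trace_ineq} and Lemma~\ref{lem:Kmins1_Lp}. One small point: your boundedness argument uses the conjugate exponent of $p$ being finite, i.e.\ $p>1$; the lemma as stated says only $K\in L^p$, but $p>1$ is indeed the standing assumption in the appendix (and is needed for the paper's proof as well), so this is a matter of reading, not a gap. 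You also correctly identify where the hypothesis that $K$ and the $K_i$ are not all zero enters (nonvanishing of $\mathcal{E}$, hence uniqueness), matching the paper's remark.
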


\begin{proof}
Existence follows by standard use of the direct method, and the uniqueness follows from the strict convexity of the unit ball in the $\HoneMC$ norm.
The assumption that not all the curvatures are zero is used in the uniqueness.
\end{proof}

\begin{remark}
For sufficiently regular $\g$, the Euler-Lagrange equations of $\mathcal{E}$ are given by \eqref{eq:ELPDE}, whose solution yields a multiple of the norm-$1$ maximizer $u^*$ satisfying \eqref{eq:dualnorm}.
\end{remark}

\bigskip

Throughout this appendix we denote by $\lesssim$ inequalities modulo a constant that only depends on $\W$ and on the $W^{2,p}$ norms of $\g$ and its inverse. 
\lemref{lem:elliptic_regularity} requires the following estimate: 
let $u$ be a solution of the boundary-value problem
\beq
\label{eq:the_BVP}
-\Delta_\g u = K 
\qquad
u|_{\Gamma_0} = 0 
\qquad
u|_{\Gamma_i} = \text{constant}
\qquad
\oint_{\Gamma_i} \pd{u}{\nu}\,\iVolG = K_i
\eeq
for $\g\in W^{2,p}$, $p\in (1,2)$. Then, 
\beq
\|u\|_{L^\infty(\W)} \lesssim \|K\|_{L^p(\W,\g)} + \sum_{i=1}^m |K_i|.
\label{eq:the_stuff_we_really_need}
\eeq
Throughout this appendix, we will use the notation 
\[
\Kbnd := \|K\|_{L^p(\W,\g)} + \sum_{i=1}^m |K_i|.
\]

The rest of this appendix is dedicated to proving equation \eqref{eq:the_stuff_we_really_need}, which is the content of Proposition~\ref{prop:whatwereallyneed}, building upon some preliminary lemmas.
Throughout below, denote by
\[
c_i = u|_{\Gamma_i} \qquad i=1,\dots,m
\]
the constant values of $u$ on the inner boundaries.

Since the $L^\infty$-norm is oblivious to the Riemannian metric, and since most of the literature concerns Euclidean domains, we carry out most of the analysis in a Euclidean setting, denoting the Euclidean metric by $\euc$. In particular, the Hessian $\nabla^2$ denotes the matrix of second derivatives with respect to Euclidean coordinates, rather than the covariant Hessian $\nabla^\g d$. The Sobolev spaces with respect to the two metrics are identical and their corresponding norms are equivalent:
\beq
\|\cdot\|_{W^{k,p}(\W,\euc)} \lesssim \|\cdot\|_{W^{k,p}(\W,\g)} \lesssim \|\cdot\|_{W^{k,p}(\W,\euc)},
\label{eq:equiv_Wkp}
\eeq
and similarly for norms on $\dW$.
We use repeatedly the following inequalities:
\begin{enumerate}[itemsep=0pt,label=(\alph*)]
\item
Poincar\'e inequality: for every $\vp\in\HoneMC$:
\beq
\label{eq:poincare}
\|\vp\|_{L^2(\W,\g)} 
\EqNo{\lesssim}{eq:equiv_Wkp} 
\|\vp\|_{L^2(\W,\euc)} 
\lesssim 
\|\nabla \vp\|_{L^2(\W,\euc)} 
\EqNo{\lesssim}{eq:equiv_Wkp} 
\|\nabla \vp\|_{L^2(\W,\g)}.
\eeq
In particular,
\beq
\label{eq:H1_H1dot}
\|\vp\|_{H^1(\W,\g)} \lesssim \|\vp\|_{\dotHnorm}.
\eeq

\item
Sobolev embedding theorem:  for every $q>1$, and for every $\vp\in\HoneMC$:
\beq
\label{eq:sob_emb}
\|\vp\|_{L^q(\W,\g)} \EqNo{\lesssim}{eq:equiv_Wkp} 
\|\vp\|_{L^q(\W,\euc)} \lesssim \|\vp\|_{H^1(\W,\euc)} 
\EqNo{\lesssim}{eq:equiv_Wkp} \|\vp\|_{H^1(\W,\g)}
\EqNo{\lesssim}{eq:H1_H1dot} \|\vp\|_{\dotHnorm}.
\eeq

\item
Trace inequality: for every function $\vp\in\HoneMC$:
\beq
\label{eq:trace_ineq}
\|\vp|_{\dW}\|_{L^2(\dW,\g)} 
\EqNo{\lesssim}{eq:equiv_Wkp} 
\|\vp|_{\dW}\|_{L^2(\dW,\euc)} 
\lesssim 
\|\vp\|_{H^1(\W,\euc)} 
\EqNo{\lesssim}{eq:sob_emb} 
\|\vp\|_{\dotHnorm}.
\eeq

\end{enumerate}

In the following (with the exception of Lemma \ref{lem:regularity}), we consider the following setting: 
$\W\subset\R^2$ is a finitely-connected, Lipschitz domain, whose outer and inner boundaries are denoted by $\Gamma_0 = \partial D$ and $\Gamma_i = \partial D_i$, $i=1,\dots,m$, respectively.
$\g$ is a Riemannian metric on $\W$ such that $\g \in W^{2,p} (\W, \g)$ for some $p>1$. 
Furthermore we fix $K\in L^p$ and $K_i \in \R$ for $i=1,\dots,m$.
\begin{lemma}
\label{lem:Kmins1_Lp}
\beq
\label{eq:combine6}
\|K\|_{\HMoneMC} \lesssim \Kbnd.
\eeq
\end{lemma}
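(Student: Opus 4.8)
The plan is to bound directly the numerator in the definition \eqref{eq:HMoneMC} of $\|K\|_{\HMoneMC}$: I will show that for every test function $\vp\in\HoneMC$,
\[
\Abs{\int_\W \vp K\,\VolG + \sum_{i=1}^m K_i\,\vp|_{\Gamma_i}} \lesssim \Kbnd\,\|\vp\|_{\dotHnorm},
\]
and then the asserted estimate follows by taking the supremum over $\vp$. I split the left-hand side into its bulk part and its boundary part and estimate each separately using the inequalities \eqref{eq:poincare}--\eqref{eq:trace_ineq}.

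For the bulk part, let $p' = p/(p-1)$ be the conjugate exponent of $p$; it is finite precisely because $p>1$, and in particular $p'>1$, so the Sobolev embedding \eqref{eq:sob_emb} applies with $q=p'$. Combining Hölder's inequality with \eqref{eq:sob_emb},
\[
\Abs{\int_\W \vp K\,\VolG} \le \|\vp\|_{L^{p'}(\W,\g)}\,\|K\|_{L^p(\W,\g)} \lesssim \|K\|_{L^p(\W,\g)}\,\|\vp\|_{\dotHnorm}.
\]

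For the boundary part, recall that every $\vp\in\HoneMC$ is constant on each inner boundary $\Gamma_i$, so $|\vp|_{\Gamma_i}|$ equals the average of $|\vp|$ over $\Gamma_i$; by Cauchy--Schwarz on $\Gamma_i$ this gives $|\vp|_{\Gamma_i}| \le |\Gamma_i|^{-1/2}\,\|\vp|_{\Gamma_i}\|_{L^2(\Gamma_i,\g)} \lesssim \|\vp|_{\dW}\|_{L^2(\dW,\g)}$, and then the trace inequality \eqref{eq:trace_ineq} yields $|\vp|_{\Gamma_i}| \lesssim \|\vp\|_{\dotHnorm}$. Summing over $i$,
\[
\Abs{\sum_{i=1}^m K_i\,\vp|_{\Gamma_i}} \le \sum_{i=1}^m |K_i|\,|\vp|_{\Gamma_i}| \lesssim \Brk{\sum_{i=1}^m |K_i|}\|\vp\|_{\dotHnorm}.
\]
Adding the two estimates and passing to the supremum over $\vp\in\HoneMC$ gives $\|K\|_{\HMoneMC} \lesssim \Kbnd$.

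There is no substantial obstacle: the statement is a soft consequence of the Poincaré, Sobolev, and trace inequalities already recorded. The only point worth flagging is that the implicit constant in the bulk estimate depends on the exponent $p'$ (equivalently on $p$), which is harmless here since $p$ is fixed throughout this appendix and $p>1$ keeps $p'$ finite; the degenerate case $p=1$ is correctly excluded.
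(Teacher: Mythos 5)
Your proof is correct and follows essentially the same route as the paper's: split the dual pairing into a bulk term handled by H\"older plus the Sobolev embedding \eqref{eq:sob_emb}, and a boundary term handled by Cauchy--Schwarz plus the trace inequality \eqref{eq:trace_ineq}. You merely spell out the details (the choice $q=p'$ and the constant's dependence on $p$) that the paper leaves implicit.
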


\begin{proof}
By definition \eqref{eq:HMoneMC},
\[
\|K\|_{\HMoneMC} = \sup_{\vp\in\HoneMC} \frac{\int_\W K\vp \,\VolG + \sum_{i=1}^m K_i \vp|_{\Gamma_i}}{\|\vp\|_{\dotHnorm}} \le \sup_{\vp\in\HoneMC} \frac{\int_\W K\vp \,\VolG }{\|\vp\|_{\dotHnorm}} +\sup_{\vp\in\HoneMC} \frac{ \sum_{i=1}^m K_i \vp|_{\Gamma_i}}{\|\vp\|_{\dotHnorm}}.
\]
The first addend is bounded by $\|K\|_{L^p(\W,\g)}$ using H\"older inequality and \eqref{eq:sob_emb}, whereas the second addend is bounded by $\sum_{i=1}^m |K_i|$ using Cauchy--Schwarz inequality and \eqref{eq:trace_ineq}.
\end{proof}

\begin{lemma}
\label{lem:sumc_i}
\beq
\sum_{i=1}^m |c_i| \lesssim \|K\|_{\HMoneMC}
\EqNo{\lesssim}{eq:combine6} \Kbnd.
\label{eq:combine5}
\eeq
\end{lemma}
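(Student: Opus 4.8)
The plan is to pair the weak form of the boundary-value problem with the solution itself and then extract the boundary constants $c_i$ from a trace estimate. First I would recall that $u$, being (a scalar multiple of) the maximizer $u^*$, lies in $\HoneMC$ and solves the weak formulation of \eqref{eq:the_BVP}: for every $\varphi\in\HoneMC$,
\[
\int_\W \ip{\nabla u,\nabla\varphi}_\g\,\VolG \;=\; \int_\W K\varphi\,\VolG + \sum_{i=1}^m K_i\,\varphi|_{\Gamma_i} \;=\; \mathcal{E}(\varphi).
\]
Taking $\varphi=u$ and using the definition \eqref{eq:HMoneMC} together with the linearity of $\mathcal{E}$ gives $\|u\|_{\dotHnorm}^2=\mathcal{E}(u)\le \|K\|_{\HMoneMC}\,\|u\|_{\dotHnorm}$, hence $\|u\|_{\dotHnorm}\le \|K\|_{\HMoneMC}$ (when $u\equiv 0$ all $c_i$ vanish and there is nothing to prove); this is just the easy half of \eqref{eq:dualnorm}.

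Next I would use that $u|_{\Gamma_i}$ is the constant $c_i$, so that $\|u|_{\Gamma_i}\|_{L^2(\Gamma_i,\g)}^2 = c_i^2\,\operatorname{length}_\g(\Gamma_i)$. Since $\g$ and $\g^{-1}$ are bounded, the $\g$-length of each $\Gamma_i$ is comparable to its fixed positive Euclidean length, so $|c_i|\lesssim \|u|_{\Gamma_i}\|_{L^2(\Gamma_i,\g)}$. Summing over the finitely many inner boundaries and applying Cauchy--Schwarz,
\[
\sum_{i=1}^m |c_i| \;\lesssim\; \Big(\sum_{i=1}^m \|u|_{\Gamma_i}\|_{L^2(\Gamma_i,\g)}^2\Big)^{1/2} \;\le\; \|u|_{\dW}\|_{L^2(\dW,\g)}.
\]
The trace inequality \eqref{eq:trace_ineq} applied to $u\in\HoneMC$ bounds the right-hand side by $\|u\|_{\dotHnorm}$, and combining with the first step yields $\sum_{i=1}^m|c_i|\lesssim\|K\|_{\HMoneMC}$. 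The remaining estimate $\|K\|_{\HMoneMC}\lesssim\Kbnd$ is precisely \eqref{eq:combine6} from \lemref{lem:Kmins1_Lp}.

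I do not expect a genuine obstacle here; the argument is essentially bookkeeping. The only points deserving a line of care are that the weak formulation above is legitimate at the stated regularity $\g\in W^{2,p}$, $K\in L^p$ with $p>1$ (which holds because $u$ is obtained variationally and the form $\int_\W\ip{\nabla\cdot,\nabla\cdot}_\g\,\VolG$ is bounded and coercive on $\HoneMC$ by \eqref{eq:poincare}), and that $\operatorname{length}_\g(\Gamma_i)$ is bounded below --- both immediate from the standing hypotheses on $\W$ and $\g$.
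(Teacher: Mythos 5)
Your proof is correct and follows essentially the same route as the paper: bound $\sum_i|c_i|$ by the boundary trace of $u$, apply the trace inequality \eqref{eq:trace_ineq}, and invoke the dual-norm relation \eqref{eq:dualnorm} together with \lemref{lem:Kmins1_Lp}. The only cosmetic difference is that you rederive the (easy half of the) identity $\|u\|_{\dotHnorm}\le\|K\|_{\HMoneMC}$ from the weak formulation, whereas the paper simply cites \eqref{eq:dualnorm}.
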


\begin{proof}
We have
\[
\sum_{i=1}^m |c_i| \lesssim 
\|u|_{\dW}\|_{L^2(\W,\g)} 
\EqNo{\lesssim}{eq:trace_ineq} 
\|u\|_{\dotHnorm} 
=  \|K\|_{\HMoneMC},
\]
where the final equality is the dual norm relation \eqref{eq:dualnorm}.  
\end{proof}

In the following, we will need the following classical elliptic regularity result:

\begin{lemma}
\label{lem:regularity}
Let $\W \subset \mathbf{R}^2$ be a bounded Lipschitz domain, and let $u$ solve the system
\[
\begin{cases}
- \sum_{i,j} a^{ij}\partial_{ij}u + \sum_i b^i\partial_iu + cu=  f &\quad\text{in }\,\, \W,\\
u=0 &\quad\text{in }\,\, \partial\W.
\end{cases}
\]
Let $p \in (1,2)$ and assume that the coefficients satisfy:
\begin{enumerate}[itemsep=0pt,label=(\alph*)]
\item $a^{ij}$ is uniformly continuous, symmetric and uniformly elliptic, with ellipticity constant $\Lambda$.
\item $b_i \in L^q(\W,\euc)$ for $q = 2p/(2-p)$.
\item $c \in L^\infty(\W)$.
\end{enumerate}
Then.
\beq
\| \nabla^2 u \|_{L^p(\W,\euc)} \lesssim C \brk{\|u \|_{L^p(\W,\euc)} + \| f \|_{L^p(\W,\euc)}}.
\label{eq:app_bound_we_need}
\eeq
where $C$ depends on $\W$, $\Lambda, \max_i \|b_i\|_{L^q(\W,\euc)}, \|c \|_{L^\infty(\W)}$, and the maximum of the moduli of continuity of $a^{ij}$.
\end{lemma}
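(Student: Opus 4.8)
The plan is to obtain \eqref{eq:app_bound_we_need} by the classical freezing-of-coefficients method, after first removing the lower-order terms by an absorption argument. Write $L_0 u := -\sum_{i,j} a^{ij}\partial_{ij}u$ for the principal part, so that $u$ solves the Dirichlet problem $L_0 u = \tilde f$ in $\W$, $u = 0$ on $\dW$, with $\tilde f := f - \sum_i b^i\partial_i u - c u$.

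First I would control $\tilde f$ in $L^p$. The potential term is immediate: $\|c u\|_{L^p(\W,\euc)} \le \|c\|_{L^\infty(\W)}\,\|u\|_{L^p(\W,\euc)}$. For the drift term I would use that the exponent $q = 2p/(2-p)$ is exactly the one satisfying $\tfrac1q + \tfrac12 = \tfrac1p$, so that H\"older's inequality gives $\|\sum_i b^i\partial_i u\|_{L^p(\W,\euc)} \le \big(\max_i\|b^i\|_{L^q(\W,\euc)}\big)\,\|\nabla u\|_{L^2(\W,\euc)}$. Since $p \in (1,2)$, the Gagliardo--Nirenberg interpolation inequalities yield, for every $\e > 0$, a bound of the form $\|\nabla u\|_{L^2(\W,\euc)} \le \e\,\|\nabla^2 u\|_{L^p(\W,\euc)} + C_\e\,\|u\|_{L^p(\W,\euc)}$ (one passes to a $W^{2,p}$-extension of $u$ to $\R^2$ to apply the whole-space inequality, the lower-order terms produced by the extension being absorbed in turn by the same device). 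Collecting these, $\|\tilde f\|_{L^p(\W,\euc)} \le \|f\|_{L^p(\W,\euc)} + \e\,\|\nabla^2 u\|_{L^p(\W,\euc)} + C_\e\,\|u\|_{L^p(\W,\euc)}$, where $C_\e$ now depends in addition on $\max_i\|b^i\|_{L^q}$ and $\|c\|_{L^\infty}$.

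Second I would invoke the a priori $W^{2,p}$-estimate for the principal part alone: $\|\nabla^2 u\|_{L^p(\W,\euc)} \le C\big(\|L_0 u\|_{L^p(\W,\euc)} + \|u\|_{L^p(\W,\euc)}\big)$, with $C$ depending only on $\W$, $p$, the ellipticity constant $\Lambda$, and the moduli of continuity of the $a^{ij}$. I would either cite this directly or reprove it along the standard route: cover $\bW$ by finitely many interior and boundary balls on which the oscillation of each $a^{ij}$ is below a threshold fixed by $\Lambda$ and $p$; on each ball freeze the coefficients at the center, reduce the frozen constant-coefficient operator to the Laplacian by a linear change of coordinates, and apply the Calder\'on--Zygmund $W^{2,p}$ estimate (its flat-boundary version on the boundary balls); estimate the commutator $\|(a^{ij}-a^{ij}(x_0))\partial_{ij}u\|_{L^p}$ by the small oscillation times $\|\nabla^2 u\|_{L^p}$ and absorb it; finally patch the local bounds together with a partition of unity. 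This is precisely the classical $L^p$ theory for second-order elliptic equations with continuous --- indeed, vanishing-mean-oscillation --- leading coefficients (as in Chapter~9 of Gilbarg and Trudinger, or in Chiarenza--Frasca--Longo for the VMO refinement). The one place where the hypotheses need care is the boundary piece; this is unproblematic in all our applications, since --- by the reduction at the beginning of Section~\ref{sec:main_theorem} --- the domain $\W$ may be taken smooth.

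Finally, inserting $L_0 u = \tilde f$ together with the bound on $\|\tilde f\|_{L^p}$ into the principal-part estimate gives $\|\nabla^2 u\|_{L^p(\W,\euc)} \le C\|f\|_{L^p(\W,\euc)} + C\e\,\|\nabla^2 u\|_{L^p(\W,\euc)} + C_\e\|u\|_{L^p(\W,\euc)}$; choosing $\e$ so small that $C\e \le \tfrac12$ and absorbing the Hessian term into the left-hand side yields \eqref{eq:app_bound_we_need} with a constant of exactly the claimed dependence (the dependence on $\max_i\|b^i\|_{L^q}$ and $\|c\|_{L^\infty}$ entering only through $C_\e$). I expect the main obstacle to be not any single step but the careful bookkeeping of constant dependencies through the freezing argument, together with checking that the interpolation device really produces an absorbable term $\e\,\|\nabla^2 u\|_{L^p}$ rather than the full $W^{2,p}$-norm of $u$; once the domain is taken smooth, every ingredient is entirely standard.
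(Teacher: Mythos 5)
Your proof is correct and follows essentially the same route as the paper's: the paper likewise reduces to the classical $W^{2,p}$ theory for continuous leading coefficients (citing Chen--Wu rather than Gilbarg--Trudinger), and handles the unbounded drift by exactly the H\"older pairing $\tfrac1q+\tfrac12=\tfrac1p$ followed by Gagliardo--Nirenberg interpolation and absorption of the $\e\,\|\nabla^2 u\|_{L^p}$ term. Your explicit flagging of the Lipschitz-versus-smooth boundary issue for the up-to-the-boundary Calder\'on--Zygmund estimate is a caveat the paper's own proof glosses over.
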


\begin{proof}
If the coefficients $b^i$ in \lemref{lem:regularity} were assumed to be bounded, then this result is classical and can be found, e.g., in \cite{CW98}. The proof in our setting requires only minor modifications of the proof in \cite[Chapters 3.4--3.5]{CW98}: in places where H\"older's inequality $\| f g \|_{L^{p}} \leq C \| f \|_{L^{p}} \| g \|_{L^{\infty}} $ is used, we need to use $\| f g \|_{L^{p}} \leq C \| f \|_{L^{2}} \| g \|_{L^{q}} $, and subsequently use Gagliardo-Nirenberg interpolation to control the $L^2$ term of the function with $L^p$ norm of its derivative. 
\end{proof}

Henceforth, we denote by $u_K^c$ the solution to the Dirichlet boundary value problem
\[
-\Delta_\g u_K^c = K
\qquad
u_K^c|_{\Gamma_0} = 0
\qquad
u_K^c|_{\Gamma_i} = c_i,
\]
that is, $u = u_K^c$ is the solution to \eqref{eq:the_BVP}.
Denote by $v_0^c$ the solution to the auxiliary system
\[
-\Delta_\euc v_0^c = 0
\qquad
v_0^c|_{\Gamma_0} = 0
\qquad
v_0^c|_{\Gamma_i} = c_i,
\]
with the Laplace-Beltrami operator replaced with the Euclidean laplacian. By standard elliptic estimates for harmonic functions,
\beq
\label{eq:v0c}
\|v_0^c\|_{W^{k,p}(\W,\g)} \simeq \|v_0^c\|_{W^{k,p}(\W,\euc)} \le \sum_{i=1}^m |c_i|
\eeq
for every integer $k$ and $p > 1$.
Noting that $u_K^c = u_0^c + u_K^0$, we start by estimating the first term. 

\begin{lemma}
\label{lem:harmonic}
\beq
\|\nabla u_0^c\|_{L^2(\W,\euc)} + \| \nabla^2 u_0^c\|_{L^p(\W,\euc)} \lesssim  \sum_{i=1}^m |c_i|
\EqNo{\lesssim}{eq:combine5} \Kbnd.
\label{eq:combine4}
\eeq
\end{lemma}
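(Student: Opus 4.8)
The plan is to compare $u_0^c$ with the Euclidean harmonic function $v_0^c$ carrying the same (locally constant) boundary data, for which the estimate \eqref{eq:v0c} is available, and to control the difference $w:=u_0^c-v_0^c$ via \lemref{lem:regularity}. Since $u_0^c$ and $v_0^c$ agree on $\dW$ we have $w\in H^1_0(\W)\subset\HoneMC$, and since $v_0^c$ is smoothly harmonic in the interior, $\partial_{11}v_0^c+\partial_{22}v_0^c=0$ pointwise; a coordinate computation then gives
\[
-\Delta_\g w \;=\; \Delta_\g v_0^c \;=\; (\g^{ij}-\delta^{ij})\,\partial_{ij}v_0^c + \beta^j\,\partial_j v_0^c \;=:\; f,
\qquad
\beta^j := \frac{1}{\sqrt{\det\g}}\,\partial_i\!\brk{\sqrt{\det\g}\,\g^{ij}}.
\]
I would rewrite the left-hand side as $-\g^{ij}\partial_{ij}w-\beta^j\partial_j w$ and check that this is an equation of the type treated in \lemref{lem:regularity}, with $a^{ij}=\g^{ij}$, $b^j=-\beta^j$ and $c=0$: indeed $\g\in W^{2,p}(\W)\hookrightarrow C^{0,\alpha}(\bW)$ for $p>1$ in two dimensions, so $a^{ij}$ is uniformly continuous, and it is uniformly elliptic by Definition~\ref{def:immersions}; since $\det\g$ and $1/\sqrt{\det\g}$ lie in $W^{2,p}\cap L^\infty$, the field $\beta^j$ is a finite sum of products of an $L^\infty$ function with a first derivative of $\g$, which lies in $W^{1,p}(\W)\hookrightarrow L^{q}(\W)$ with $q=2p/(2-p)$, whence $b^j\in L^q(\W)$; and all the constants and moduli of continuity appearing in \lemref{lem:regularity} are then controlled by the $W^{2,p}$ norms of $\g$ and $\g^{-1}$.

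Next I would bound the two right-hand quantities in \lemref{lem:regularity}. For $\|f\|_{L^p(\W,\euc)}$: the first term is at most $\|\g^{ij}-\delta^{ij}\|_{L^\infty}\,\|\nabla^2 v_0^c\|_{L^p}\lesssim\sum_i|c_i|$ by \eqref{eq:v0c}, and the second, by H\"older with the conjugate exponents $q$ and $2$ (so that $1/q+1/2=1/p$) together again with \eqref{eq:v0c}, is $\|\beta^j\partial_j v_0^c\|_{L^p}\lesssim\|\beta^j\|_{L^q}\,\|\nabla v_0^c\|_{L^2}\lesssim\sum_i|c_i|$. For $\|w\|_{L^p(\W,\euc)}$: testing the weak formulation $\int_\W\langle\nabla u_0^c,\nabla\varphi\rangle_\g\,\VolG=0$ against $\varphi=w\in H^1_0(\W)$ gives
\[
\|\nabla w\|_{L^2(\W,\g)}^2 \;=\; -\int_\W\langle\nabla v_0^c,\nabla w\rangle_\g\,\VolG \;\le\; \|\nabla v_0^c\|_{L^2(\W,\g)}\,\|\nabla w\|_{L^2(\W,\g)},
\]
hence $\|\nabla w\|_{L^2(\W,\g)}\le\|\nabla v_0^c\|_{L^2(\W,\g)}\lesssim\sum_i|c_i|$ by \eqref{eq:equiv_Wkp} and \eqref{eq:v0c}; then the Poincar\'e inequality \eqref{eq:poincare} and boundedness of $\W$ give $\|w\|_{L^p(\W,\euc)}\lesssim\|w\|_{L^2(\W,\euc)}\lesssim\|\nabla w\|_{L^2(\W,\euc)}\lesssim\sum_i|c_i|$.

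Finally, I would feed these two bounds into \lemref{lem:regularity} to obtain $\|\nabla^2 w\|_{L^p(\W,\euc)}\lesssim\sum_i|c_i|$, and then combine this with the $L^2$-bound on $\nabla w$ just obtained, with the estimates \eqref{eq:v0c} for $v_0^c$, and with the triangle inequality applied to $u_0^c=v_0^c+w$, to conclude $\|\nabla u_0^c\|_{L^2(\W,\euc)}+\|\nabla^2 u_0^c\|_{L^p(\W,\euc)}\lesssim\sum_i|c_i|$; the last inequality in \eqref{eq:combine4} is then exactly \eqref{eq:combine5}. The main (indeed the only nonroutine) point is verifying $b^j=-\beta^j\in L^{2p/(2-p)}$, which is precisely where the regularity hypothesis $\g\in W^{2,p}$---rather than merely $\g\in W^{1,p}$ or $\g\in C^0$---is used, so that \lemref{lem:regularity} applies with the correct Lebesgue exponent $q$; everything else is a routine assembly of the Poincar\'e, Sobolev, H\"older, and trace inequalities already recorded in this appendix.
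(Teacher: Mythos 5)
Your proposal is correct and follows essentially the same route as the paper: compare $u_0^c$ with the Euclidean-harmonic $v_0^c$, get the $L^2$ gradient bound from the Dirichlet-energy orthogonality (the paper phrases this as $u_0^c$ minimizing the Dirichlet functional, which is equivalent to your weak-formulation computation), and apply \lemref{lem:regularity} to the difference with the $L^p$ bound on the zeroth-order term supplied by Poincar\'e. Your verification of the hypotheses of \lemref{lem:regularity} and the splitting of $\Delta_\g v_0^c$ using $\Delta_\euc v_0^c=0$ is more explicit than the paper's terse appeal to \eqref{eq:v0c}, but the substance is identical.
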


\begin{proof}
We start with the following sequence of $L^2$-inequalities and prove the inequality for the first addend.
\beq
\|\nabla u_0^c\|_{L^2(\W,\euc)} 
\EqNo{\lesssim}{eq:equiv_Wkp}  
\|\nabla u_0^c\|_{L^2(\W,\g)} \le 
\|\nabla v_0^c\|_{L^2(\W,\g)} 
\EqNo{\lesssim}{eq:equiv_Wkp} \|\nabla v_0^c\|_{L^2(\W,\euc)} 
\EqNo{\lesssim}{eq:v0c}
\sum_{i=1}^m |c_i|,
\label{eq:combine1}
\eeq
where the second inequality (no constant involved) follows from $u_0^c$ being the minimizer of the Dirichlet functional among all functions satisfying the same boundary conditions.

By the definition of $u_0^c$ and $v_0^c$,
\[
\Delta_\g(u_0^c - v_0^c) = - \Delta_\g v_0^c
\qquad
(u_0^c - v_0^c)|_{\dW} = 0.
\]
Applying \lemref{lem:regularity}:
\[
\begin{aligned}
\|\nabla^2(u_0^c - v_0^c) \|_{L^p(\W,\euc)} &\lesssim 
\|u_0^c - v_0^c\|_{L^p(\W,\euc)} + \|\Delta_\g v_0^c\|_{L^p(\W,\euc)} \\
& \EqNo{\lesssim}{eq:v0c}
\|u_0^c - v_0^c\|_{L^p(\W,\euc)} + \sum_{i=1}^m |c_i|.
\end{aligned}
\]
By the Poincar\'e inequality and the triangle inequality:
\[
\begin{aligned}
\|u_0^c - v_0^c\|_{L^p(\W,\euc)}  &\lesssim \|\nabla (u_0^c - v_0^c)\|_{L^p(\W,\euc)}  
\le 
\|\nabla u_0^c\|_{L^p(\W,\euc)}  + \|\nabla v_0^c\|_{L^p(\W,\euc)} \\
& \EqNo{\lesssim}{eq:v0c}
\|\nabla u_0^c\|_{L^2(\W,\euc)}  + \sum_{i=1}^m |c_i| .
\end{aligned}
\]
Combining with \eqref{eq:combine1},
\[
\|\nabla^2(u_0^c - v_0^c) \|_{L^p(\W,\euc)} \lesssim  \sum_{i=1}^m |c_i|.
\]
One last application of the triangle inequality, using \eqref{eq:v0c} once again gives the desired result.
\end{proof}

We proceed to estimate $u_K^0$:

\begin{lemma}
\beq
\label{eq:combine3}
\|u_K^0\|_{L^p(\W,\g)} \lesssim \|K\|_{\HMoneMC} + \sum_{i=1}^m |c_i|
\stackrel{\eqref{eq:combine5},\eqref{eq:combine6}}{\lesssim}
\Kbnd.
\eeq
\end{lemma}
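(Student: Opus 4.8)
The plan is to recognize $u_K^0$ as the Riesz representative of $K$ with respect to the ordinary Dirichlet form on $H^1_0(\W,\g)$, and to control its Dirichlet energy by $\|K\|_{\HMoneMC}$ using the inclusion $H^1_0(\W)\subset\HoneMC$. First I would record that, by construction, $u_K^0 = u_K^c - u_0^c$ lies in $H^1_0(\W,\g)$ and solves $-\Delta_\g u_K^0 = K$ weakly with vanishing trace on all of $\dW$ (its data on $\Gamma_0$ is $0-0$ and on each $\Gamma_i$ is $c_i-c_i$). Testing the weak formulation against $u_K^0$ itself gives
\[
\|\nabla u_K^0\|_{L^2(\W,\g)}^2=\int_\W K\,u_K^0\,\VolG .
\]

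Next I would use that $u_K^0$ is an admissible test function in the variational definition \eqref{eq:HMoneMC} of $\|K\|_{\HMoneMC}$: it vanishes on $\Gamma_0$ and is constant — indeed identically $0$ — on each $\Gamma_i$, so $u_K^0\in\HoneMC$, and the inner-boundary contribution $\sum_{i=1}^m K_i\,u_K^0|_{\Gamma_i}$ vanishes. Hence
\[
\int_\W K\,u_K^0\,\VolG=\int_\W K\,u_K^0\,\VolG+\sum_{i=1}^m K_i\,u_K^0|_{\Gamma_i}\;\le\;\|K\|_{\HMoneMC}\,\|u_K^0\|_{\dotHnorm}=\|K\|_{\HMoneMC}\,\|\nabla u_K^0\|_{L^2(\W,\g)},
\]
and combining this with the previous display and cancelling one factor of $\|\nabla u_K^0\|_{L^2(\W,\g)}$ yields $\|\nabla u_K^0\|_{L^2(\W,\g)}\le\|K\|_{\HMoneMC}$.

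Finally I would convert this gradient bound into the asserted $L^p$ bound: since $u_K^0\in\HoneMC$, the Poincar\'e inequality \eqref{eq:poincare} gives $\|u_K^0\|_{L^2(\W,\g)}\lesssim\|\nabla u_K^0\|_{L^2(\W,\g)}$, and since $\W$ has finite $\g$-volume and $p<2$, H\"older's inequality gives $\|u_K^0\|_{L^p(\W,\g)}\lesssim\|u_K^0\|_{L^2(\W,\g)}$ (with implicit constant a power of $\Vol_\g(\W)$, which is controlled by the $W^{2,p}$-norm of $\g$). Chaining these with the previous step gives $\|u_K^0\|_{L^p(\W,\g)}\lesssim\|K\|_{\HMoneMC}\le\|K\|_{\HMoneMC}+\sum_{i=1}^m|c_i|$, and the second estimate in the claimed chain then follows from \eqref{eq:combine5} and \eqref{eq:combine6}. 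I do not expect a genuine obstacle here; the only point meriting an explicit line is the inclusion $H^1_0(\W)\subset\HoneMC$ together with the vanishing of the inner-boundary terms, which is precisely what lets the pure-Dirichlet dual norm of $K$ be dominated by the larger quantity $\|K\|_{\HMoneMC}$, and the passage from $L^2$ to $L^p$ is harmless because $p\in(1,2)$.
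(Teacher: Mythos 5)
Your proof is correct, but it takes a genuinely different route from the paper's. The paper bounds $\|u_K^0\|_{L^p(\W,\g)}$ by $\|\nabla u_K^0\|_{L^2}$ via Poincar\'e, then splits $\nabla u_K^0 = \nabla u_K^c - \nabla u_0^c$ by the triangle inequality, controlling the first term by the dual norm relation \eqref{eq:dualnorm} (which gives $\|\nabla u_K^c\|_{L^2(\W,\g)} = \|K\|_{\HMoneMC}$) and the second by the harmonic estimate \eqref{eq:combine4}; this is why the $\sum_{i=1}^m |c_i|$ term appears in the intermediate bound. You instead exploit that $u_K^0$ has zero trace on \emph{all} of $\dW$, so the energy identity $\|\nabla u_K^0\|_{L^2(\W,\g)}^2 = \int_\W K\,u_K^0\,\VolG$ combined with the inclusion $H^1_0(\W)\subset \HoneMC$ (with vanishing inner-boundary terms) gives directly $\|\nabla u_K^0\|_{L^2(\W,\g)} \le \|K\|_{\HMoneMC}$. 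Your argument is self-contained for this step---it bypasses Lemma~\ref{lem:harmonic} entirely---and yields the slightly sharper conclusion $\|u_K^0\|_{L^p(\W,\g)} \lesssim \|K\|_{\HMoneMC}$ without the $\sum|c_i|$ contribution, which of course still implies the stated chain. The only points worth making explicit are the ones you already flag: the weak formulation may legitimately be tested against $u_K^0$ itself (the pairing $\int_\W K\,u_K^0\,\VolG$ is finite since $K\in L^p$ and $u_K^0\in L^{p'}$ by Sobolev embedding in two dimensions), and the passage from $L^2$ to $L^p$ is harmless for $p\in(1,2)$ on a bounded domain.
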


\begin{proof}
We have
\[
\begin{aligned}
\|u_K^0\|_{L^p(\W,\g)} 
&\lesssim 
\|u_K^0\|_{L^2(\W,\g)} 
\EqNo{\lesssim}{eq:equiv_Wkp} 
\|u_K^0\|_{L^2(\W,\euc)} 
\EqNo{\lesssim}{eq:poincare}
\|\nabla u_K^0\|_{L^2(\W,\euc)} =  
\|\nabla (u_K^c  - u_0^c)\|_{L^2(\W,\euc)} \\
&\lesssim 
\|\nabla u_K^c\|_{L^2(\W,\euc)} +  \|\nabla u_0^c\|_{L^2(\W,\euc)} 
\EqNo{\lesssim}{eq:equiv_Wkp} 
\|\nabla u_K^c\|_{L^2(\W,\g)} +  \|\nabla u_0^c\|_{L^2(\W,\euc)} \\
&\EqNo{\lesssim}{eq:combine4}
\|K\|_{\HMoneMC} + \sum_{i=1}^m |c_i|,
\end{aligned}
\]
where the last inequality hinges also on the dual norm relation \eqref{eq:dualnorm}.
\end{proof}

\begin{lemma}
\label{lem:nab2_uK0}
\beq
\label{eq:combine2}
\|\nabla^2 u_K^0\|_{L^p(\W,\euc)} \lesssim \|u_K^0\|_{L^p(\W,\g)} + \|K\|_{L^p(\W,\g)}
\EqNo{\lesssim}{eq:combine3} \Kbnd.
\eeq
\end{lemma}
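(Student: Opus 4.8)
The plan is to identify the equation satisfied by $u_K^0$ in Euclidean coordinates and to invoke the elliptic regularity estimate of \lemref{lem:regularity}. Recall that $u_K^0$ solves the \emph{homogeneous} Dirichlet problem $-\Delta_\g u_K^0 = K$ in $\W$ with $u_K^0|_{\dW} = 0$, its trace vanishing on \emph{every} boundary component, so that \lemref{lem:regularity} applies as stated. Expanding the Laplace--Beltrami operator in Euclidean coordinates,
\[
\Delta_\g u = \g^{ij}\partial_{ij}u + \frac{1}{\sqrt{\det\g}}\,\partial_i\!\brk{\sqrt{\det\g}\,\g^{ij}}\,\partial_j u,
\]
so that $u_K^0$ solves $-\sum_{ij}a^{ij}\partial_{ij}u + \sum_i b^i\partial_i u + cu = f$ with principal part $a^{ij} = \g^{ij}$, zeroth-order coefficient $c\equiv 0$, right-hand side $f = K$, and first-order coefficients $b^i$ which are, up to the bounded factor $1/\sqrt{\det\g}$, first derivatives of the entries of $\sqrt{\det\g}\,\g^{-1}$.

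Next I would verify the three hypotheses of \lemref{lem:regularity}, tracking that every constant there is controlled by $\W$ and the $W^{2,p}$ norms of $\g$ and $\g^{-1}$, so that the $\lesssim$ convention of this appendix is respected. Since $\g\in W^{2,p}(\W)$ and, by \lemref{lem:approx_metrics}(a), $\g^{-1}\in W^{2,p}(\W)$, and since $\sqrt{\det\g}$ and $1/\sqrt{\det\g}$ are smooth functions of the entries of $\g$, which are bounded and bounded away from degeneracy by \defref{def:immersions}, all of $\g^{ij}$, $\sqrt{\det\g}$, $1/\sqrt{\det\g}$ lie in $W^{2,p}(\W)$ with norms controlled by the $W^{2,p}$ data. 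The two-dimensional embedding $W^{2,p}(\W)\inj C^{0,\alpha}(\bW)$ (for $p>1$) then gives that $a^{ij}=\g^{ij}$ is uniformly continuous with modulus of continuity controlled by $\|\g^{-1}\|_{W^{2,p}}$; symmetry is clear; and uniform ellipticity with constant depending only on the bounds on $\g,\g^{-1}$ is exactly \defref{def:immersions} --- this is hypothesis~(a). Since $W^{2,p}(\W)$ is a Banach algebra in dimension two (for $p>1$), the entries of $\sqrt{\det\g}\,\g^{-1}$ lie in $W^{2,p}(\W)$, hence their first derivatives lie in $W^{1,p}(\W)\inj L^q(\W)$ with $q = 2p/(2-p)$, which is precisely the exponent demanded in hypothesis~(b) --- the Sobolev conjugate of $p$ in dimension two, available since we may assume $p\in(1,2)$; multiplying by $1/\sqrt{\det\g}\in W^{2,p}(\W)\inj L^\infty(\bW)$ keeps $b^i\in L^q(\W,\euc)$ with controlled norm. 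Hypothesis~(c) is trivial since $c\equiv 0$.

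Applying \lemref{lem:regularity} with $u = u_K^0$ and $f = K$ then yields
\[
\|\nabla^2 u_K^0\|_{L^p(\W,\euc)} \lesssim \|u_K^0\|_{L^p(\W,\euc)} + \|K\|_{L^p(\W,\euc)},
\]
and the norm equivalence \eqref{eq:equiv_Wkp} replaces the two Euclidean $L^p$ norms on the right by their $\g$-counterparts, which is the first asserted inequality. The second then follows by bounding $\|u_K^0\|_{L^p(\W,\g)}$ via \eqref{eq:combine3} and noting that $\|K\|_{L^p(\W,\g)}\le \Kbnd$ by the very definition of $\Kbnd$.

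I do not anticipate a genuine obstacle. The one point requiring care is that the drift $b^i$ lies in exactly the space $L^{2p/(2-p)}$ required by \lemref{lem:regularity}: this is no coincidence, since $2p/(2-p)$ is the two-dimensional Sobolev conjugate of $p$, so the single derivative lost in passing from $\g\in W^{2,p}$ to $b^i$ is compensated by the embedding $W^{1,p}\inj L^{2p/(2-p)}$. Beyond that, one must confirm that each quantity on which the constant of \lemref{lem:regularity} depends --- the ellipticity constant, $\max_i\|b^i\|_{L^q(\W,\euc)}$, $\|c\|_{L^\infty(\W)}$, and the moduli of continuity of the $a^{ij}$ --- is itself bounded in terms of $\W$ and the $W^{2,p}$ norms of $\g$ and its inverse, so that the estimate conforms to the $\lesssim$ in force throughout the appendix.
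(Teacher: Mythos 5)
Your proposal is correct and follows essentially the same route as the paper: write $-\Delta_\g u_K^0=K$ in non-divergence form in Euclidean coordinates, observe that the zero Dirichlet data on all of $\dW$ make \lemref{lem:regularity} applicable, and check that the drift coefficients (one derivative of a $W^{2,p}$ quantity) land in $L^{2p/(2-p)}$ by Sobolev embedding, with all constants controlled by the $W^{2,p}$ norms of $\g$ and $\g^{-1}$. The only cosmetic difference is that you express the first-order term via $\partial_i(\sqrt{\det\g}\,\g^{ij})$ rather than via the Christoffel symbols $\g^{ij}\Gamma_{ij}^k$, which is the same quantity.
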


\begin{proof}
Writing the interior equation for $u_K^0$ in coordinates:
\[
\g^{ij} \partial_{ij}  u_K^0  - \g^{ij} \Gamma_{ij}^k \partial_k u_K^0  = K,
\]
we apply \lemref{lem:regularity} noting that the matrix $\g^{ij}$ is symmetric, uniformly continuous and uniformly elliptic, whereas
\[
\|\g^{ij} \Gamma_{ij}^k\|_{L^q(\W,\euc)} \lesssim \|\g^{-1}\|_{L^\infty(\W) }\|\nabla \g\|_{L^q(\W,\euc)} 
\]
is controlled by the $W^{2,p}$ norms of $\g$ and its inverse. 
\end{proof}

We finally prove the main result of this appendix.

\begin{proposition}
\label{prop:whatwereallyneed}
Let $u = u_K^c$ be the solution to \eqref{eq:the_BVP}, then 
\[
\|u\|_{L^\infty(\W)} \lesssim \Kbnd.
\]
\end{proposition}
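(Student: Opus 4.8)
The plan is to bound $u = u_K^c$ in $W^{2,p}(\W,\euc)$ and then invoke the Sobolev embedding $W^{2,p}(\W,\euc)\hookrightarrow L^\infty(\W)$, which holds because $\W\subset\R^2$ is a bounded Lipschitz domain and $2p>2=\dim\W$ whenever $p>1$. Since the $L^\infty$-norm is insensitive to the metric and the $W^{2,p}$ norms with respect to $\g$ and $\euc$ are equivalent by \eqref{eq:equiv_Wkp}, it is enough to control each piece of the decomposition $u_K^c = u_0^c + u_K^0$ separately in $W^{2,p}$, and then add.

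For the harmonic-data part $u_0^c$, \lemref{lem:harmonic} already provides $\|\nabla^2 u_0^c\|_{L^p(\W,\euc)}\lesssim\Kbnd$ together with $\|\nabla u_0^c\|_{L^2(\W,\euc)}\lesssim\Kbnd$. Since $u_0^c$ vanishes on $\Gamma_0$, the Poincar\'e inequality gives $\|u_0^c\|_{L^2(\W,\euc)}\lesssim\|\nabla u_0^c\|_{L^2(\W,\euc)}\lesssim\Kbnd$, and on the bounded domain $\W$ one has $\|u_0^c\|_{L^p}\lesssim\|u_0^c\|_{L^2}$ and $\|\nabla u_0^c\|_{L^p}\lesssim\|\nabla u_0^c\|_{L^2}$ (recall $p<2$). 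Hence $\|u_0^c\|_{W^{2,p}(\W,\euc)}\lesssim\Kbnd$.

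For the zero-data part $u_K^0 = u_K^c - u_0^c$, inequality \eqref{eq:combine3} gives $\|u_K^0\|_{L^p(\W,\g)}\lesssim\Kbnd$ and \lemref{lem:nab2_uK0} gives $\|\nabla^2 u_K^0\|_{L^p(\W,\euc)}\lesssim\Kbnd$. The first derivatives were in fact already estimated inside the proof of \eqref{eq:combine3}, using the dual-norm relation \eqref{eq:dualnorm} and \lemref{lem:Kmins1_Lp}: one has $\|\nabla u_K^0\|_{L^2(\W,\euc)}\le\|\nabla u_K^c\|_{L^2(\W,\euc)}+\|\nabla u_0^c\|_{L^2(\W,\euc)}\lesssim\|K\|_{\HMoneMC}+\Kbnd\lesssim\Kbnd$, whence $\|\nabla u_K^0\|_{L^p}\lesssim\Kbnd$ as before. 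Thus $\|u_K^0\|_{W^{2,p}(\W,\euc)}\lesssim\Kbnd$, and combining the two pieces yields $\|u\|_{W^{2,p}(\W,\euc)}\lesssim\Kbnd$; the Sobolev embedding then concludes.

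I do not expect a genuine obstacle here: the argument is essentially bookkeeping that assembles the already-proven pieces. The single substantive ingredient is the interior second-derivative estimate \lemref{lem:nab2_uK0} (which rests on \lemref{lem:regularity}), and the role of the hypothesis $p>1$ is to be used exactly twice — once to make \lemref{lem:regularity} applicable with the $L^q$ lower-order coefficients, and once so that $W^{2,p}(\W)$ embeds into $C^0(\bW)$ in two dimensions.
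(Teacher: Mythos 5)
Your proposal is correct and follows essentially the same route as the paper: decompose $u_K^c = u_0^c + u_K^0$, invoke the second-derivative bounds \eqref{eq:combine4} and \eqref{eq:combine2}, and conclude by Sobolev embedding using that $u$ vanishes on $\Gamma_0$. The only difference is that you spell out the control of the lower-order terms of the $W^{2,p}$ norm explicitly, whereas the paper absorbs this into the single inequality $\|u\|_{L^\infty(\W)} \lesssim \|\nabla^2 u\|_{L^p(\W,\euc)}$.
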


\begin{proof}
Since  $u$ vanishes on $\Gamma_0$, it follows from Sobolev embedding theorem that
\[
\|u\|_{L^\infty(\W)} \lesssim \|\nabla^2 u\|_{L^p(\W,\euc)}
\le  \|\nabla^2 u_K^0\|_{L^p(\W,\euc)} + \|\nabla^2 u_0^c\|_{L^p(\W,\euc)}
\stackrel{\eqref{eq:combine4},\eqref{eq:combine2}}{\lesssim}
\Kbnd.
\]
\end{proof}

\addcontentsline{toc}{section}{References} 
\footnotesize{
\providecommand{\bysame}{\leavevmode\hbox to3em{\hrulefill}\thinspace}
\providecommand{\MR}{\relax\ifhmode\unskip\space\fi MR }
\providecommand{\MRhref}[2]{%
  \href{http://www.ams.org/mathscinet-getitem?mr=#1}{#2}
}
\providecommand{\href}[2]{#2}

}


\begin{thebibliography}{GHKM13}

\bibitem[Bis07]{Bis07}
C.J. Bishop, \emph{Conformal welding and {Koebe's} theorem}, Annals Math.
  \textbf{166} (2007), 613--656.

\bibitem[BP71]{BP71}
T.F. Banchoff and W.F. Pohl, \emph{A generalization of the isoperimetric
  inequality}, J. Diff. Geom. \textbf{6} (1971), 175--192.

\bibitem[Car76]{Doc76}
M.P.~Do Carmo, \emph{Differential geometry of curves and surfaces}, Prentice
  Hall, 1976.

\bibitem[CLMS93]{CLMS93}
R.~Coifman, P.L. Lions, Y.~Meyer, and S.~Semmes, \emph{Compensated compactness
  and {Hardy} spaces}, J. Math. Pures Appl. \textbf{72} (1993), 247--286.

\bibitem[CW98]{CW98}
Y.-Z. Chen and L.-C. Wu, \emph{Second order elliptic equations and elliptic
  systems}, Translations of Mathematical Monographs, vol. 174, Amer. Math.
  Soc., 1998.

\bibitem[EG15]{EG15}
L.C. Evans and R.F. Gariepy, \emph{Measure theory and fine properties of
  functions}, revised edition ed., CRC Press, 2015.

\bibitem[EKM20]{EKM20}
M.~Epstein, R.~Kupferman, and C.~Maor, \emph{Limit of distributed dislocations
  in geometric and constitutive paradigms}, Geometric Continuum Mechanics
  (R.~Segev and M.~Epstein (eds), eds.), Advances in Mechanics and Mathematics,
  vol.~42, Birkhauser, 2020, arXiv:1902.02410.

\bibitem[GHKM13]{GHKM13}
J.~Guven, J.A. Hanna, O.~Kahraman, and M.M. M\"uller, \emph{Dipoles in thin
  sheets}, Eur. Phys. J. E \textbf{36} (2013), 106.

\bibitem[GP74]{GP74}
V.~Guillemin and A.~Pollack, \emph{Differential topology}, Prentice Hall, 1974.

\bibitem[Gri85]{Gri85}
P.~Grisvard, \emph{Elliptic problems in nonsmooth domains}, Monographs and
  Studies in Mathematics, vol.~24, Pitman, 1985.

\bibitem[Hir59]{Hir59}
M.W. Hirsch, \emph{Immersions of manifolds}, Trans. Amer. Math. Soc.
  \textbf{93} (1959), 242--276.

\bibitem[Hir76]{Hir76}
\bysame, \emph{Differential topology}, Springer, 1976.

\bibitem[Hor11]{Hor11}
P.~Hornung, \emph{Approximation of flat $w^{2,2}$ isometric immersions by
  smooth one}, Arch. Rat. Mech. Anal. \textbf{199} (2011), 015---1067.

\bibitem[HV18]{HV18}
P.~Hornung and I.~{Velci\'c}, \emph{Regularity of intrinsically convex
  {$W^{2,2}$} surfaces and a derivation of a homogenized bending theory of
  convex shells}, J. Math. Pures Appl. \textbf{115} (2018), 1--23.

\bibitem[KM15]{KM15}
R.~Kupferman and C.~Maor, \emph{The emergence of torsion in the continuum limit
  of distributed dislocations}, J. Geom. Mech. \textbf{7} (2015), 361--387.

\bibitem[KM16]{KM16}
\bysame, \emph{Riemannian surfaces with torsion as homogenization limits of
  locally-{Euclidean} surfaces with dislocation-type singularities}, Proc. Roy.
  Soc. Edinburgh \textbf{146A} (2016), 741--768.

\bibitem[KM25a]{KM25b}
\bysame, \emph{Elasticity between manifolds: the weakly-incompatible limit},
  SIAM J. Math. Anal. (2025), in press.

\bibitem[KM25b]{KM25}
\bysame, \emph{From {Volterra} dislocations to strain-gradient plasticity},
  Calc. Var. PDEs (2025), in press.

\bibitem[KMS15]{KMS15}
R.~Kupferman, M.~Moshe, and J.P. Solomon, \emph{Metric description of defects
  in amorphous materials}, Arch. Rat. Mech. Anal \textbf{216} (2015),
  1009--1047.

\bibitem[KO20]{KO20}
R.~Kupferman and E.~Olami, \emph{Homogenization of edge-dislocations as a weak
  limit of de-rham currents,}, Geometric Continuum Mechanics (R.~Segev and
  M.~Epstein, eds.), Advances in Mechanics and Mathematics, vol.~42,
  Birkhauser, 2020.

\bibitem[KS02]{KS02}
E.~Kuwert and R.~Sch\"aztle, \emph{Gradient flow for the {Willmore}
  functional}, Comm. Anal. Geom. \textbf{10} (2002), 307--339.

\bibitem[KS12]{KS12}
R.~Kupferman and Y.~Shamai, \emph{Incompatible elasticity and the immersion of
  non-flat {Riemannian} manifolds in {Euclidean} space}, Israel J. Math.
  \textbf{190} (2012), 135--156.

\bibitem[KS14]{KS14}
R.~Kupferman and J.P. Solomon, \emph{A {Riemannian} approach to reduced plate,
  shell, and rod theories}, J. Func. Anal. \textbf{266} (2014), 2989--3039.

\bibitem[Kup17]{Kup17}
R.~Kupferman, \emph{On the bending energy of buckled edge-dislocations}, Phys.
  Rev. E \textbf{96} (2017), 063002.

\bibitem[Lee12]{Lee12}
J.M. Lee, \emph{Introduction to smooth manifolds}, second edition ed.,
  Springer, 2012.

\bibitem[Lee18]{Lee18}
\bysame, \emph{Introduction to {Riemannian} manifolds}, second edition ed.,
  Graduate Texts in Mathematics, Springer, 2018.

\bibitem[LM22]{LM22}
M.~Lewicka and L.~Mahadevan, \emph{Geometry, analysis and morphogenesis:
  Problems and prospect}, Bull. Amer. Math. Soc. \textbf{59} (2022), 331--369.

\bibitem[LP11]{LP11}
M.~Lewicka and M.R. Pakzad, \emph{Scaling laws for non-{Euclidean} plates and
  the {$W^{2,2}$} isometric immersions of {Riemannian} metrics}, ESAIM:
  Control, Optimisation and Calculus of Variations \textbf{17} (2011),
  1158--1173.

\bibitem[MAG08]{MAG08}
M.M M\"uller, M.~Ben Amar, and J.~Guven, \emph{Conical defects in growing
  sheets}, Phys. Rev. Lett. \textbf{101} (2008), 156104.

\bibitem[Mas62]{Mas62}
W.S. Massey, \emph{Surfaces of {Gaussian} curvature zero in {Euclidean} space},
  Tohoku Math. J. \textbf{14} (1962), 73--79.

\bibitem[MM]{MM25}
C.~Maor and M.G. Mora, \emph{Rigorous analysis of shape transitions in
  frustrated elastic ribbons}, Preprint. \url{https://arxiv.org/abs/2503.11779}

\bibitem[MN14]{MN14b}
F.C. Marquez and A.~Neves, \emph{Min-max theory and the {Willmore} conjecture},
  Ann. Math. \textbf{179} (2014), 683--782.
  
  
\bibitem[M\"ul90]{Mul90}
{M{\"{u}}ller, S.}, \emph{Higher integrability of determinants and weak convergence in {$L^1$}}, 
	J. Reine Angew. Math. \textbf{412} (1990), 20--34.


\bibitem[MP05]{MP05}
{M\"uller, S.} and M.R. Pakzad, \emph{Regularity properties of isometric
  immersions}, Math. Z. \textbf{251} (2005), 313--331.

\bibitem[MS95]{MS95}
S.~{M\"uller} and V.~Sverak, \emph{On surfaces of finite total curvature}, J.
  Diff. Geom. \textbf{42} (1995), 229--258.

\bibitem[MS19]{MS19}
C.~Maor and A.~Shachar, \emph{On the role of curvature in the elastic energy of
  non-{Euclidean} thin bodies}, J. Elasticity \textbf{134} (2019), 149--173.

\bibitem[Nee16]{Nee16}
T.~R. Needham, \emph{Grassmannian geometry of framed curve spaces}, Ph.D.
  thesis, U. Georgia, 2016.

\bibitem[NP87]{NP87}
D.R. Nelson and L.~Peliti, \emph{Fluctuations in membranes with crystalline and
  hexatic order}, J. de Physique \textbf{48} (1987), 1085--1092.

\bibitem[Olb17]{Olb17}
H.~Olbermann, \emph{Energy scaling law for a single disclination in a thin
  elastic sheet}, Arch. Rat. Mech. Anal. \textbf{224} (2017), 985--1019.

\bibitem[Olb18]{Olb18}
\bysame, \emph{The shape of low energy configurations of a thin elastic sheet
  with a single disclination}, Anal. PDE \textbf{11} (2018), 1285--1302.

\bibitem[{Riv}08]{Riv08}
T.~{Rivi\`ere}, \emph{Analysis aspects of {Willmore} surfaces}, Invent. Math.
  \textbf{174} (2008), 1--45.

\bibitem[Sch19]{vSc19}
J.~Van Schaftingen, \emph{Sobolev mappings into manifolds: nonlinear methods
  for approximation, extension and lifting problems}, Lecture notes, 2019.

\bibitem[SN88]{SN88}
H.S. Seung and D.R. Nelson, \emph{Defects in flexible membranes with
  crystalline order}, Phys. Rev. A \textbf{38} (1988), 1005--1018.

\bibitem[Wei74]{Wei74}
J.L. Weiner, \emph{A generalization of the isoperimetric inequality on the
  2-sphere}, Indiana Univ. Math. J. \textbf{24} (1974), 243--248.

\end{thebibliography}
\end{document}